\documentclass[a4paper, reqno]{amsart}

\usepackage{latexsym}
\usepackage{amsmath}
\usepackage{amsfonts}
\usepackage{amssymb}
\usepackage{amsxtra}
\usepackage{mathrsfs}
\usepackage{eucal}
\usepackage{tikz} \usetikzlibrary{calc} \tikzset{>=latex} \usetikzlibrary{backgrounds}
\usepackage[all]{xy}
\usepackage{color}
\usepackage{braket}
\usepackage{graphics, setspace}
\usepackage{etoolbox, textcomp}
\usepackage{comment}
\usepackage{changepage}
\usepackage{xcolor}
\definecolor{rouge}{rgb}{0.85,0.1,.4}
\definecolor{bleu}{rgb}{0.1,0.2,0.9}
\definecolor{violet}{rgb}{0.7,0,0.8}
\usepackage[colorlinks=true,linkcolor=bleu,urlcolor=violet,citecolor=rouge]{hyperref}
\usepackage{cleveref}
\usepackage{lscape}
\usepackage{subcaption}
\usepackage{hyperref}

\RequirePackage[numbers,sort&compress]{natbib}




\newtheorem{theorem}{Theorem}[section]
\newtheorem{lemma}[theorem]{Lemma}
\newtheorem{definition}[theorem]{Definition}
\newtheorem{example}[theorem]{Example}
\newtheorem{proposition}[theorem]{Proposition}
\newtheorem{corollary}[theorem]{Corollary}
\newtheorem{conjecture}[theorem]{Conjecture}
\newtheorem{remark}[theorem]{Remark}
\newtheorem{notation}[theorem]{Notation}


\def\CC{\mathbb{C}}

\def\ZZ{\mathbb{Z}}

\newcommand\cD{\mathcal{D}}

\newcommand\cF{\mathcal{F}}





\newcommand{\ch}{\textup{char}}

\newcommand{\Ext}{\textup{Ext}}



\newcommand\quash[1]{}

\newcommand\one{\mathbf{1}}


\renewcommand\a\alpha
\renewcommand\b\beta
\newcommand\ga\gamma
\renewcommand\d\delta
\newcommand\D\Delta

\newcommand{\slirr}[1]{\Mod{L}_{#1}}              
\newcommand{\sldis}[1]{\Mod{D}_{#1}}              
\newcommand{\slindrel}[1]{\Mod{E}_{#1}}           
\newcommand{\slrel}[2]{\slindrel{#1;\Delta_{#2}}} 
\newcommand{\slproj}[1]{\Mod{P}_{#1}}             

\newcommand{\alg}[1]{\mathfrak{#1}}  
\newcommand{\Mod}[1]{\mathcal{#1}}   
\newcommand{\lat}[1]{\mathsf{#1}}    

\newcommand{\voa}{vertex operator algebra}




\newcommand{\sfaut}{\sigma}                   
\newcommand{\sfmod}[2]{\sfaut^{#1}(#2)}       


\newcommand{\rlat}{\lat{Q}} 

\newcommand{\lra}{\longrightarrow}

\newcommand{\dses}[3]{0 \lra #1 \lra #2 \lra #3 \lra 0} 

\def \<{\langle}
\def \>{\rangle}

\def \a{\alpha }

\def \ga{\gamma }

\def \b{\beta }

\def \ch{\text{ch}}

\newcommand{\bea}{\begin{eqnarray}}
\newcommand{\eea}{\end{eqnarray}}
\newcommand{\be}{\begin {equation}}
\newcommand{\ee}{\end{equation}}
\newcommand{\g}{\frak g}

\newcommand{\sltwo}{\mathfrak{sl}_2}
\newcommand{\asltwo}{\widehat{\mathfrak{sl}}_2}

\newcommand{\h}{\frak h}

\newcommand{\W}{\mathcal W}


\newcommand{\C}{\Bbb C}

\newcommand{\vak}{\bf 1}

\newcommand{\mc}{\mathcal}
\newcommand{\mf}{\mathfrak}
\newcommand{\on}{\operatorname}

\newcommand{\sWt}[1]{L_{#1}(\g)\on{-wtmod}}

\newcommand{\sWtsl}[1]{L_{#1}(\mf{sl}_2)\on{-wtmod}}

\newcommand{\sWtslA}[1]{L_{#1}(\mf{sl}_2)^A\on{-wtmod}}

\newcommand{\ssWtsl}[1]{\W_{#1}(\ssl)\on{-wtmod}}
\newcommand{\BssWtsl}[1]{\W^{#1}(\ssl)\on{-wtmod}}

\begin{document}

\newcommand{\ssl}{\mathfrak{sl}_{2|1}}
\newcommand{\vir}{\mathcal{M}}
\newcommand{\virp}{\vir_{>0}}
\newcommand{\virb}{\vir_{\ge0}}
\newcommand{\virz}{\vir_0}
\newcommand{\virn}{\vir_{<0}}

\newcommand{\uea}[1]{\mathcal{U}(#1)}

\newcommand{\hw}{highest-weight}
\newcommand{\foh}[1]{\textcolor{blue}{#1}}  
\newcommand{\jy}[1]{\textcolor{red}{#1}}
\newcommand{\oc}{\mathcal{O}_c}
\newcommand{\ocfin}{\oc^{\textup{fin}}}
\newcommand{\ocleft}{\oc^{\textup{L}}}
\newcommand{\ocright}{\oc^{\textup{R}}}
\newcommand{\cofcat}{\mathcal{C}_1}

\title[  ]{Tensor categories of weight modules of $\widehat{\mathfrak{sl}}_2$ at admissible level}
\author{Thomas Creutzig}
\address{Department of Mathematical and Statistical Sciences, University of Alberta, 632 CAB, Edmonton, Alberta, Canada T6G 2G1}
\email{creutzig@ualberta.ca}

\date{}
\maketitle

\begin{abstract}
 The category of weight modules $\sWtsl{k}$ of the simple affine vertex algebra of $\mathfrak{sl}_2$ at an admissible level $k$ is neither finite nor semisimple and modules are usually not lower-bounded and have infinite dimensional conformal weight subspaces. However this vertex algebra enjoys a duality with $\W_\ell(\ssl)$, the simple prinicipal $\W$-algebra of $\mathfrak{sl}_{2|1}$ at  level $\ell$ (the $N=2$ super conformal algebra) where the levels are related via $(k+2)(\ell+1)=1$.  Every weight module of $\W_\ell(\ssl)$ is lower-bounded and has finite-dimensional conformal weight spaces. 
  The main technical result is that every weight module of $\W_\ell(\ssl)$ is $C_1$-cofinite. The existence of a vertex tensor category follows and the theory of vertex superalgebra extensions implies the existence of vertex tensor category structure on $\sWtsl{k}$ for any admissible level $k$.
    
 As applications, the fusion rules of ordinary modules with any weight module are computed and it is shown that $\sWtsl{k}$ is a ribbon category if and only if $\sWt{k+1}$ is, in particular it follows that for admissible levels $k = - 2 + \frac{u}{v}$ and $v \in \{2, 3\}$ and $u = -1 \mod v$  the category $\sWtsl{k}$ is a ribbon category.
\end{abstract}

\baselineskip=14pt
\newenvironment{demo}[1]{\vskip-\lastskip\medskip\noindent{\em#1.}\enspace
}{\qed\par\medskip}

\section{Introduction}

Let $\mathfrak g$ be a simple Lie algebra with Cartan subalgebra $\mathfrak h$ and let $\mathcal C(\mathfrak g)$ be its category of weight modules, i.e. modules that have semisimple action of $\mathfrak h$ with finite-dimensional weight spaces. $\mathcal C(\mathfrak g)$ contains the category $\mathcal O$ and the category of integrable $\mathfrak g$-modules as subcategories. All modules in $\mathcal C(\mathfrak g)$, except for the integrable ones, are infinite-dimensional and in particular the tensor product of two infinite dimensional modules has infinite-dimensional weight spaces: The category $\mathcal C(\mathfrak g)$ is already not a tensor category. 
The category $\mathcal C(\mathfrak g)$ lacks other niceness conditions, e.g. it doesnot have enough projective objects \cite{Maz10}.

It thus seems to be futile to look at a related problem for the affinization $\widehat{\mathfrak g}$ of $\mathfrak{g}$.  We do so nonetheless. Let
\[
\widehat{\mathfrak g} =  \mathfrak g \otimes_{\mathbb C} \mathbb C[t, t^{-1}] \oplus \mathbb C K, \qquad 
\widehat{\mathfrak g}_{\geq 0} =  \mathfrak g \otimes_{\mathbb C} \mathbb C[t] \oplus \mathbb CK,
\]
with $K$ central.
Then any object $M$ in $\mathcal C(\mathfrak g)$ lifts to a weight module of $\widehat{\mathfrak g}$ at level $k \in \mathbb C$ in the usual way: $M$ becomes a $\widehat{\mathfrak g}_{\geq 0}$ module by setting $K = k \text{Id}$ and $(\mathfrak g \otimes_{\mathbb C} t\mathbb C[t]) M =0$.  Then $\widehat M_k := U(\widehat{\mathfrak g} ) \otimes_{U(\widehat{\mathfrak g}_{\geq 0} )} M$ is a $\widehat{\mathfrak g}$-module at level $k$. The module $\widehat{\mathbb C}_k$ can be given the structure of a vertex operator algebra, the universal affine vertex algebra of $\mathfrak g$ at level $k$, $V^k(\mathfrak g)$. $V^k(\mathfrak g)$-modules are precisely smooth $\widehat{\mathfrak{g}}$-modules at level $k$. Any $V^k(\mathfrak g)$-module is graded by conformal weight and 
let $KL_k(\mathfrak g)$ be the category of $V^k(\mathfrak g)$-modules whose generalized conformal weight spaces are integrable $\mathfrak g$-modules. Kazhdan and Lusztig proved in the 1990's \cite{KazLus93,KazLus94a} that for almost all levels $KL_k(\mathfrak g)$ is a braided tensor category and as such equivalent to the category of finite-dimensional weight modules of the quantum group $U_q(\mathfrak g)$ for $q = \text{exp}\left( \frac{\pi i}{\ell (k+h^\vee}\right)$ with $\ell$ the lacing number and $h^\vee$ the dual Coxeter number of $\mathfrak g$.   

In a major effort Huang-Lepowsky and Zhang established a tensor category theory for vertex operator algebras \cite{HLZ1,HLZ2,HLZ3,HLZ4,HLZ5,HLZ6,HLZ7,HLZ8}. Any vertex tensor category is in particular a braided tensor category. Unfortunately various finiteness criteria need to be satisfied in order for the Huang-Lepowsky-Zhang theory to hold. For example it is necessary that conformal weight spaces are finite dimensional, a condition that holds for $\widehat M_k$ if and only if $M$ is integrable as a $\mathfrak g$-module. 

So far there are two results on vertex tensor category structure on categories that have modules that are neither lower-bounded nor have finite dimensional comformal weight spaces. 
Firstly, Allen and Wood proved this for the $\beta\gamma$-VOA \cite{Allen:2020kkt} and secondly it has been established for the $\mathcal B_p$ algebras of \cite{CreCos13} in \cite{CMY3}. Note that the $\beta\gamma$-VOA is the $\mathcal B_2$-algebra and that the Heisenberg coset of the $\mathcal B_p$-algebra is the singlet algebra $\mathcal M(p)$.  All these tensor category results crucially used that every finite length $\mathcal M(p)$-module is $C_1$-cofinite \cite{Creutzig:2016htk,CMY6}.

For us $C_1$-cofiniteness is also key, albeit for a very different vertex operator algebra. 
The idea of this work is to show that all finiteness conditions of the  Huang-Lepowsky-Zhang theory hold for a dual vertex operator superalgebra and then use the duality to inherit vertex tensor category structure for the affine vertex operator algebra. We consider the special case of $\mathfrak g = \sltwo$ and the category of admissible weight modules, $\sWtsl{\ell}$ of \cite{ACK}.  Here $ L_\ell(\sltwo) $ denotes the simple quotient of $ V^\ell(\sltwo)$ and $\ell$ is a Kac-Wakimoto admissible level for $\sltwo$ \cite{KacWak88}.
This means that $\ell = -2 + \frac{u}{v}$ for coprime positive integers $u, v$ and $u\geq 2$. 
The dual vertex superalgebra is the $N=2$ superconformal algebra which we denote by $\W_k(\ssl)$ as it is the quantum Hamiltonian reduction of the affine vertex operator superalgebra of $\mathfrak{sl}_{2|1}$ at level $k$. Let $\cF$ be the vertex operator superalgebra of a pair of free fermions. The Kazama-Suzuki correspondence (or duality) says that \cite{KS}
\[
\W_k(\ssl) \cong \text{Com}(\pi, L_\ell(\sltwo) \otimes \cF).
\]
Here $\pi$ is a certain Heisenberg vertex operator subalgebra of $L_\ell(\sltwo) \otimes \cF$.
This correspondence is strong in the sense that it gives blockwise equivalences of categories \cite{Feigin:1997ha,Sato,CGNS,CLRW}.
Let  $\ssWtsl{k}$ be the category of weight modules of $\W_k(\ssl)$. We show that this category satisfies all finiteness conditions that are necessary to ensure the existence of a vertex tensor supercategory structure and then use the theory of vertex operator superalgebra extensions \cite{CKM} to infer the corresponding result for $\sWtsl{\ell}$.

\begin{theorem} \textup{(Theorem \ref{thm:main})}
Let $\ell$ be an admissible level for $\sltwo$ and $k$ be defined via $(\ell+2)(k+1) =1$. Then $\ssWtsl{k}$  is a vertex tensor supercategory and $\sWtsl{\ell}$ is a vertex tensor category.
\end{theorem}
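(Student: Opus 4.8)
The plan is to put the Huang--Lepowsky--Zhang (HLZ) logarithmic tensor theory \cite{HLZ8} to work on the superalgebra side $\ssWtsl{k}$, where its finiteness hypotheses can genuinely be met, and then to export the resulting structure across the Kazama--Suzuki duality \cite{KS} to the affine side $\sWtsl{\ell}$ via the vertex superalgebra extension theory of \cite{CKM}. A direct attack on $\sWtsl{\ell}$ is hopeless because its objects are in general neither lower bounded nor have finite-dimensional conformal weight spaces, whereas every object of $\ssWtsl{k}$ is lower bounded with finite-dimensional conformal weight spaces, so that HLZ becomes available as soon as $C_1$-cofiniteness is secured.

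First I would check that $\ssWtsl{k}$ satisfies the standing assumptions of \cite{HLZ8}: it is an abelian category of grading-restricted generalized $\W_k(\ssl)$-modules, and it is closed under contragredients, since the restricted dual of a weight module is again a weight module. The decisive input is the main technical result that every object of $\ssWtsl{k}$ is $C_1$-cofinite. Granting this, I would invoke the criterion---rooted in Huang's cofiniteness work and built into \cite{HLZ8}, and used in the same spirit for the singlet algebra in \cite{Creutzig:2016htk,CMY6}---that a category of $C_1$-cofinite, grading-restricted generalized modules closed under contragredients is automatically closed under the $P(z)$-tensor product and carries vertex tensor category structure: the convergence of products and iterates, the existence of the $P(z)$-tensor products, and the associativity isomorphisms all follow from $C_1$-cofiniteness. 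The one point requiring care is closure, namely that a $P(z)$-tensor product of two weight modules is again lower bounded with finite-dimensional conformal weight spaces and is $C_1$-cofinite; the latter is preserved under fusion by the cofiniteness theory, and the grading restrictions are inherited from those of the inputs. Since $\W_k(\ssl)$ is a vertex operator \emph{super}algebra, this yields a vertex tensor \emph{super}category structure on $\ssWtsl{k}$.

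Next I would transfer this structure. By the Kazama--Suzuki correspondence $\W_k(\ssl)\cong\mathrm{Com}(\pi,\,L_\ell(\sltwo)\otimes\cF)$, the stress tensors add up in a coset fashion, so $A:=L_\ell(\sltwo)\otimes\cF$ is a conformal extension of $V:=\W_k(\ssl)\otimes\pi$ by the lattice of Heisenberg charges, i.e. an infinite-order simple-current extension. I would form the Deligne product of $\ssWtsl{k}$ with the semisimple vertex tensor category of Fock (weight) modules of the Heisenberg algebra $\pi$, obtaining a vertex tensor supercategory of $V$-modules, and realise $A$ as a commutative, haploid superalgebra object in it. Because $A$ is an infinite direct sum of simple-current modules it is not an object of the naive Deligne product, so I would first pass to its direct-limit completion, in which $A$ lives as an honest algebra object. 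Applying \cite{CKM} then endows the category of generalized $A$-modules that restrict into this completion with vertex tensor supercategory structure, the genuine (local, integer-monodromy) $A$-modules forming a braided tensor subcategory. Finally, the block-wise equivalences of \cite{CGNS,CLRW} identify this category, after stripping off the semisimple free-fermion factor $\cF$ that carries the fermionic parity, with $\sWtsl{\ell}$; the disappearance of that parity is precisely why the affine side is an ordinary vertex tensor category rather than a supercategory.

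The hard part will be the transfer, not Step 1. Concretely, the obstacles are: (i) making precise that $A$ is a commutative superalgebra object, which forces the use of the direct-limit completion because the simple-current extension has infinite order and $A$ escapes the naive Deligne product; (ii) verifying the locality, commutativity, and parity hypotheses of \cite{CKM} for $A$, with the free fermions contributing the odd part; and (iii) checking that the induced category of (local) $A$-modules, with the $\cF$-factor removed, is exactly $\sWtsl{\ell}$ and neither larger nor smaller---this is where the equivalences \cite{CGNS,CLRW} must be upgraded from abelian-category equivalences to statements compatible with the full tensor structure, so that the induction functor matches fusion on the two sides.
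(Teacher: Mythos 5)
Your proposal follows essentially the same route as the paper's proof: vertex tensor supercategory structure on $\ssWtsl{k}$ obtained from $C_1$-cofiniteness of all objects via the existence criterion of \cite{CY,Mc-mirror} (the paper's Theorem \ref{thm:existence}), followed by transfer to $\sWtsl{\ell}$ through the Kazama--Suzuki extension $L_\ell(\sltwo)\otimes\cF \supset \W_k(\ssl)\otimes\pi$, using the Deligne product with the category of Heisenberg Fock modules, its direct limit completion, and the theory of (local) extension modules \cite{CKM,CMY2}, with the local modules identified as $\sWtsl{\ell}\boxtimes\,\text{sVect}$. The only point to add is that the existence criterion also requires that lower-bounded $C_1$-cofinite modules have finite length, which the paper supplies via Corollary \ref{cor:fl}; with that hypothesis included, your outline matches the paper's argument.
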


The main issue is to show that every object in $\ssWtsl{k}$ satisfies a certain finiteness condition, called $C_1$-cofiniteness. Most of this article is devoted to proving that this is indeed true, see Theorem \ref{keythm} and Corollary \ref{cor:C1}.
Having a vertex tensor category has a few fairly direct consequences. 
\begin{enumerate}
\item The fusion rules of any ordinary module with any projective or simple object in $\sWtsl{\ell}$, see Theorem \ref{thm:fusion} for the concrete formulae. The result is as conjectured via Verlinde's formula \cite{CreMod12, CR1}.
\item We introduce a subcategory  $\sWtslA{\ell}$ of $ \sWtsl{\ell}$ and show that this is a tensor subcategory (Corollary \ref{cor:subcategory}) and 
for $r =1, \dots, u-1$ we define functors 
\[
\mathcal F_r : \sWtslA{\ell+1} \rightarrow \sWtsl{\ell} \boxtimes\sWtsl{1}
\]
and show that they are each fully faithful, see Proposition \ref{prop:Fr}.
\item The previous results are used to establish a translation property, namely 
$\sWtsl{\ell+1}$ is a ribbon category  if and only if $\sWtsl{\ell}$ is a ribbon category, see Corollary \ref{cor:translation}.
\item Thanks to \cite{CMY3} (see Corollary \ref{cor:ribbon}.):
$\sWtsl{\ell}$ is a ribbon category for admissible level $\ell = -2 + \frac{u}{v}$ and $v \in \{2, 3\}$ and $u = -1 \mod v$. Moreover all fusion rules follow from \textup{\cite{CMY3}} using Corollary \ref{cor;tensor}. 
\end{enumerate}

Of course it is expected that our vertex tensor category result holds in much more generality. 
\begin{conjecture} Let $\ell$ be an admissible level for the simple Lie algebra $\g$. 
\begin{enumerate}
\item $L_{\ell}(\g)\on{-wtmod}$ is a vertex tensor category.
\item $L_{\ell}(\g)\on{-wtmod}$ is a ribbon category.
\end{enumerate}
\end{conjecture}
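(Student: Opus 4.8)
The plan is to follow the three-step template that underlies the $\sltwo$ case: (i) identify a ``dual'' vertex operator superalgebra whose weight modules are automatically lower-bounded with finite-dimensional conformal weight spaces, (ii) prove that \emph{every} weight module on the dual side is $C_1$-cofinite, so that the Huang--Lepowsky--Zhang machinery applies and equips the dual category with vertex tensor structure, and (iii) transport this structure back to $L_\ell(\g)\on{-wtmod}$ through the theory of vertex operator superalgebra extensions. For $\g=\sltwo$ the dual object is the $N=2$ superconformal algebra $\W_k(\ssl)$ and the bridge is the Kazama--Suzuki correspondence; the content of the conjecture is that an analogous chain exists for every simple $\g$.

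First I would set up the right duality, which for higher rank is no longer the single rank-one Heisenberg coset used for $\sltwo$. One natural candidate is to combine inverse quantum Hamiltonian reduction with a Feigin--Semikhatov-type duality, relating $L_\ell(\g)$, through a free-field and lattice extension, to a subregular $\W$-algebra and thence to a principal $\W$-superalgebra $\mathcal{D}_\ell(\g)$ whose weight modules are lower-bounded with finite-dimensional conformal weight spaces. Alternatively one may iterate rank-one Kazama--Suzuki cosets, peeling off one simple root at a time to descend to a smaller admissible affine algebra tensored with free fields. In either case the desired output is a dual superalgebra $\mathcal{D}_\ell(\g)$ together with a strong, blockwise equivalence matching $L_\ell(\g)\on{-wtmod}$ with $\mathcal{D}_\ell(\g)\on{-wtmod}$ up to spectral-flow and simple-current twists.

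The hard part will be step (ii): establishing $C_1$-cofiniteness of \emph{all} weight modules over the dual algebra. In the $\sltwo$ case this rests on explicit control of the weight-space combinatorics and of relaxed and spectral-flowed modules (Theorem \ref{keythm}, Corollary \ref{cor:C1}); for general $\g$ the lattice of relaxed modules, the action of spectral flow, and the structure of the Zhu algebra are substantially more intricate, and a uniform $C_1$-cofiniteness statement is exactly what is missing. I expect this to require either a reduction-by-stages argument that propagates $C_1$-cofiniteness through each rank-one Kazama--Suzuki step, or a direct analysis of the (typically rational or quasi-lisse) principal $\W$-superalgebra together with inverse reduction to lift finiteness back to the affine side. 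Once $C_1$-cofiniteness is in hand, steps (i)--(iii) close exactly as in Theorem \ref{thm:main}, yielding part (1) of the conjecture.

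For part (2) I would argue as in the translation principle of the paper. Rigidity and a compatible ribbon twist are automatic on the ordinary subcategory, which is the Kazhdan--Lusztig braided tensor category and is ribbon for admissible $\ell$; the task is to extend rigidity to the genuinely infinite-dimensional, non-lower-bounded weight modules. I would use the induction and translation functors relating consecutive levels, together with spectral flow, to reduce ribbonness at level $\ell$ to a finite list of base cases (mirroring Corollary \ref{cor:translation}), and then verify rigidity of the simple and projective weight modules directly, cross-checking the resulting fusion rules against the Verlinde prediction. The principal obstruction here is proving rigidity of the non-lower-bounded modules, which does not follow formally from the existence of the vertex tensor category and is precisely why part (2) is stated only as a conjecture.
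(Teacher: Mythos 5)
The statement you have attempted is stated in the paper as a \emph{conjecture}, and the paper contains no proof of it for general $\g$: what is actually proven there is part (1) for $\g=\sltwo$ only (Theorem \ref{thm:main}) and part (2) only for the admissible levels $\ell=-2+\frac{u}{v}$ with $v\in\{2,3\}$ and $u = -1 \mod v$ (Corollary \ref{cor:ribbon}). So there is no proof in the paper to compare yours against, and any genuine proof of the conjecture would have to supply mathematics that the paper explicitly leaves open.

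Your proposal does not supply that mathematics, and to your credit you say so yourself. It is a research program, not a proof. Concretely: (a) the existence of a dual vertex operator superalgebra $\mathcal D_\ell(\g)$ together with a blockwise equivalence of weight-module categories is only hypothesized --- neither the iterated rank-one Kazama--Suzuki route nor the Feigin--Semikhatov/inverse-reduction route has been carried out at the level of module categories for general $\g$, and the paper itself only points to such dualities as the basis of \emph{forthcoming} work on $\mathfrak{osp}_{1|2}$ and $\mathfrak{sl}_{2|1}$; (b) the step you correctly identify as the hard one, uniform $C_1$-cofiniteness of all weight modules on the dual side, is precisely the main technical content of the paper in the $\sltwo$ case (Theorem \ref{keythm}, Corollary \ref{cor:C1}), and that argument leans on the Adamovi\'c free field realization and the Feigin--Fuchs description of Virasoro singular vectors (Proposition \ref{sing}), for which no higher-rank analogue is established; and (c) for part (2), your translation-and-base-cases plan mirrors Corollary \ref{cor:translation}, but rigidity of the non-lower-bounded simple and projective modules is exactly the unproven obstruction, which is why the paper lists ``prove rigidity for all $\ell$'' among its open problems. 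In short, your outline is a faithful extrapolation of the paper's strategy, but it closes none of the gaps; it cannot be accepted as a proof of the conjecture.
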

The second point is a strengthening of the first one. We proved the first point of this conjecture for $\g = \sltwo$ and the second one for a special series of levels. 
One in fact might wonder if a similar result holds for affine vertex operator superalgebras. In forthcoming works and using the present work, the existence of vertex tensor category for certain categories of $L_\ell(\mathfrak{osp}_{1|2})$ and $L_{\ell}(\mathfrak{sl}_{2|1})$ at specific series of levels $\ell$ will be established. The reason is that both vertex superalgebras are extensions of $L_\ell(\sltwo)$ times either a Virasoro vertex algebra in the case of $L_\ell(\mathfrak{osp}_{1|2})$  or times a vertex algebra closely related to  $L_k(\sltwo)$ in the case of $L_{\ell}(\mathfrak{sl}_{2|1})$. In the latter case, $\ell$ and $k$ are related via $(\ell+1)(k+1)=1$ and it is an example of Feigin-Semikhatov duality \cite{FeiW2n04,CL22}.
The case of $L_\ell(\mathfrak{osp}_{1|2})$ has already been studied in \cite{Creutzig:2018ogj}  under the assumptions of existence of vertex tensor category as well as our fusion rule results of Theorem \ref{thm:fusion}.

It is widely believed that vertex tensor categories can often also be realized as categories of modules of some (quasi) Hopf algebra. In the case of $\sWtsl{\ell}$ a close connection to a quantum group of $\mathfrak{sl}_{2|1}$ is expected thanks to the theory of \cite{CLR} together with the free field realization of \cite{Ad1}.  The pioneer of this idea (or better program) is Alyosha Semikhatov \cite{Semikhatov:2013xk,Semikhatov:2011ie,Semikhatov:2011uc}. We summarize section 6 of \cite{CLR} for the case of $\g = \mathfrak{sl}_{2|1}$.
$\mathfrak{sl}_{2|1}$ allows for a triangular decomposition $\mathfrak{sl}_{2|1} = \g_- \oplus \g_0 \oplus \g_+$ with $\g_0 = \mathfrak{gl}_2 =  \mathfrak{sl}_2 \oplus \mathbb C h$ and $\g_\pm = \mathbb C^2_{\pm 1}$ each odd and each a two-dimensional simple $\g_0$-module of $h$-weight $\pm 1$. Consider $N:= U_q(\mathfrak{gl}_2\oplus \mathbb C^2_1)$ as a Nichols algebra in the category $\mathcal U_q(\mathfrak{gl}_2)$ of tilting modules of $U_q(\mathfrak{gl}_2)$. By this we mean the Deligne product of the category of tilting modules of $U_q(\sltwo)$ and the category of graded vector spaces $\text{Vect}^Q_{\mathbb C}$ with some specific quadratic form $Q$ depending on $q$.  
Then the category of tilting modules of $U_q(\mathfrak{sl}_{2|1})$ is the category of Yetter-Drinfeld modules for the Nichols algebra $N$ inside $\mathcal U_q(\mathfrak{gl}_2)$. This construction can be slightly modified and the following procedure one might like to call a partial semisimplification. Firstly the Nichols algebra is $N \cong  \mathbb C_0 \oplus \mathbb C^2_1 \oplus \mathbb C_{2}$  where $\mathbb C^n_\lambda$ is the $n$-dimensional irreducible module of $U_q(\sltwo)$ of $\mathbb C$-weight $\lambda$. We can rescale the weights by another complex number $\mu$, that is set $N_\mu :=  \mathbb C_0 \oplus \mathbb C^2_\mu \oplus \mathbb C_{2\mu}$.  
 Secondly if $q$ is a root of unity then we can replace the category of tilting modules of $U_q(\sltwo)$ by its semisimplification $\widetilde U_q(\sltwo)$, that is we quotient by all negligible morphisms, see e.g. Theorem 5 of \cite{Sawin}. Let $\widetilde N_\mu$ be the image of $N_\mu$ under semisimplification. 
  Then one can define $\widetilde{\mathcal  U}^\mu_q(\mathfrak{sl}_{2|1})$ to be the category of Yetter-Drinfeld modules for $\widetilde N_\mu$ inside $\widetilde{\mathcal U}_q(\sltwo) \boxtimes \text{Vect}^Q_{\mathbb C}$.
\begin{conjecture}\label{conj:KL}
Let $\ell$ be admissible and let $q = \text{exp}\left( \pi i (\ell+2) \right)$. Then there exists $\mu \in \mathbb C$ such that
\[
\sWtslA{\ell}  \cong \widetilde{\mathcal U}^\mu_q(\mathfrak{sl}_{2|1})
\]
as braided tensor categories. 
\end{conjecture}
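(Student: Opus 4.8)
The plan is to prove the equivalence by combining the free field realization of $L_\ell(\sltwo)$ of \cite{Ad1} with the Nichols-algebra and screening-operator machinery of \cite{CLR}, which is designed precisely to upgrade a free field realization into a braided tensor equivalence between a category of vertex-algebra weight modules and a category of Yetter--Drinfeld modules. First I would fix the realization: Adamovi\'c embeds $L_\ell(\sltwo)$ into a free field algebra (a Heisenberg/lattice vertex algebra tensored with a $\beta\gamma$ or fermionic system) as a joint kernel of screening operators, and the weight modules constituting $\sWtslA{\ell}$ are recovered as images of Fock (free field) modules under these screenings, together with spectral-flow twists. A preliminary check is that \emph{every} object of $\sWtslA{\ell}$ is accessible from the free field side; this is where the definition of the subcategory $\sWtslA{\ell}$ (a tensor subcategory by Corollary \ref{cor:subcategory}) enters, since it is tailored so that its objects are exactly the free-field-reachable ones.

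The heart of the argument is to match the braided structures. The screening currents generate a Nichols algebra $N$ whose braiding matrix is governed by the mutual OPE exponents of the screenings; these evaluate to $q = \exp(\pi i(\ell+2)) = \exp(\pi i u/v)$. The correspondence of \cite{CLR} then identifies the category of free-field modules---equipped with the Huang--Lepowsky--Zhang braiding whose existence on $\sWtslA{\ell}$ is furnished by Theorem \ref{thm:main}---with Yetter--Drinfeld modules over $N$. I would then align this with the triangular decomposition $\ssl = \g_- \oplus \g_0 \oplus \g_+$: the bosonic $\sltwo$-screening reproduces the tilting category $\mathcal U_q(\sltwo)$, the Heisenberg/lattice direction reproduces $\text{Vect}^Q_{\mathbb C}$, and the two odd screenings reproduce the fermionic generators $\g_\pm = \mathbb C^2_{\pm 1}$. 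The rescaling $\mu$ is read off as the ratio of the $h$-charge of an odd screening to that of the bosonic one, and verifying $N_\mu \cong \mathbb C_0 \oplus \mathbb C^2_\mu \oplus \mathbb C_{2\mu}$ is a direct OPE computation that simultaneously pins down $\mu$.

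Because $q$ is a root of unity the tilting category is not semisimple, so the last step is to identify the two semisimplifications. On the vertex-algebra side, negligible morphisms should be exactly those factoring through weight modules of vanishing quantum (asymptotic) dimension---the analogues of the zero-norm singular vectors produced at $q = \exp(\pi i u/v)$. I would show that quotienting by these negligible morphisms is compatible with the tensor product (so the quotient is again braided tensor) and that under the \cite{CLR} equivalence it corresponds term by term to the algebraic quotient defining $\widetilde N_\mu$, hence to $\widetilde{\mathcal U}^\mu_q(\ssl)$ via \cite{Sawin}. As an independent consistency check one can match the fusion rules already computed in Theorem \ref{thm:fusion} and the ribbon twists (the $L_0$-eigenvalues versus the quantum-supergroup ribbon element) on the two sides.

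The main obstacle will be controlling the non-semisimple, logarithmic part of the braiding and proving that these two notions of negligibility coincide. The machinery of \cite{CLR} yields an abstract braided equivalence from the screening data, but checking that the vertex-algebraic semisimplification agrees exactly with the algebraic one---so that no non-split extension or logarithmic $L_0$-action is lost or spuriously created---requires a careful analysis of self-extensions and the nilpotent part of $L_0$ on both sides. Establishing this compatibility, together with the precise determination of $\mu$, is the crux of the argument.
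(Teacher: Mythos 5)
There is a fundamental mismatch here: the statement you are trying to prove is Conjecture \ref{conj:KL}, and the paper offers \emph{no proof of it} --- it is explicitly listed among the main open problems, together with rigidity of $\sWtsl{\ell}$ for all admissible $\ell$ and the computation of all fusion rules. The paper's stated roadmap is that the conjecture ``should follow'' from those first two open problems combined with a slight generalization of Theorem 8.1 of \cite{CLR} (essentially a few abelian equivalences). Your outline follows this intended strategy in spirit (free field realization of \cite{Ad1} plus the screening/Nichols-algebra machinery of \cite{CLR}), but it is a plan, not a proof: the decisive inputs it invokes are themselves unproven. In particular, the braided tensor structure on $\sWtslA{\ell}$ exists by Theorem \ref{thm:main}, but rigidity --- which any equivalence with a category of Yetter--Drinfeld modules over a Nichols algebra, and the \cite{CLR} machinery itself, will require --- is only known for $v \in \{2,3\}$, $u \equiv -1 \bmod v$ (Corollary \ref{cor:ribbon}), and Theorem \ref{thm:fusion} computes only the fusion of ordinary modules with simples and projectives, not the full fusion ring. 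You assume all of this silently, and you also concede in your final paragraph that the ``crux'' (compatibility of the logarithmic structure) is unresolved; a proof attempt that defers its crux is a gap by definition.

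Beyond incompleteness, there is a concrete wrong turn in your semisimplification step. In the paper's construction the semisimplification is \emph{partial}: one semisimplifies only the ambient base category --- the tilting modules of $U_q(\sltwo)$, replaced by $\widetilde{U}_q(\sltwo)$ via \cite{Sawin}, Deligne-tensored with $\text{Vect}^Q_{\mathbb C}$ --- and then takes Yetter--Drinfeld modules over the image $\widetilde{N}_\mu$ of the Nichols algebra inside this quotient. The resulting category $\widetilde{\mathcal U}^\mu_q(\mathfrak{sl}_{2|1})$ is \emph{not} semisimple, and neither is $\sWtslA{\ell}$: it contains the non-split extensions of Theorem \ref{classification}, with projective covers $P^{r,n}_m$ carrying nilpotent endomorphisms (four composition factors, as in the Loewy diagram). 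Your proposal instead quotients the vertex-algebra side by negligible morphisms (``weight modules of vanishing quantum dimension''), which would annihilate exactly these projective objects and collapse the blocks $C_{1,n}$ to semisimple ones; the result could not be equivalent to the non-semisimple Yetter--Drinfeld category, so this step, as written, fails. The correct shape of the argument --- the one the paper envisions --- keeps both sides logarithmic and matches them by abelian block equivalences (in the style of \cite{CGNS} and Section 6 of \cite{CLR}) upgraded to braided equivalences once rigidity and the full fusion rules are available; determining $\mu$ from the screening charges, as you suggest, is plausibly the easy part.
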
 
\noindent We only consider the subcategory $\sWtslA{\ell}$ of $\sWtsl{\ell}$. $\sWtsl{\ell}$ should then be closely related to $ \widetilde{\mathcal U}^\mu_q(\mathfrak{sl}_{2|1})$ times a semisimplification of quantum $\sltwo$. The proof of the Conjecture amounts to applying a slight generalization of Theorem 8.1 of \cite{CLR}, that is to effectively prove a few abelian equivalences. Here is a list of the main  open problems concerning $\sWtsl{\ell}$ at admissible level $\ell$.
\begin{enumerate}
\item Prove rigidity for all $\ell$.
\item Compute all fusion rules.
\item Prove Conjecture \ref{conj:KL}.
\end{enumerate}
It should be possible to verify all assumptions of Theorem 8.1 of \cite{CLR} once the first two points of this list are proven, i.e. the last point should follow from the first two together with a generalization Theorem 8.1 of \cite{CLR}. 
We believe that all these three points are now within reach, e.g. Florencia Orosz~Hunziker informed me that the fusion rules will be largely settled soon \cite{flor}.

\subsection{Organization}

In section \ref{VOA} we introduce few basics on vertex operator modules, in particular we prove a non-degeneracy property of intertwining operators that is needed later, see Proposition \ref{prop:intertwiner}. In sections \ref{sec:vir}, \ref{sltwo} and \ref{N=2} some Virasoro algebra, affine $\sltwo$ and $N=2$ super Virasoro algebra results are recalled. Also a few new statements that are needed later are proved. The main new insight is Theorem \ref{thm:fusion} about the fusion rules of ordinary modules with any simple or projective module. 
Next some relevant free field realizations are introduced in section \ref{ff} and then necessary results on $C_1$-cofiniteness in section \ref{C1}. With this all ingredients are in place and the $C_1$-cofiniteness of all objects in $\ssWtsl{k}$ can be proven see Theorem \ref{keythm} and Corollary \ref{cor:C1}.
In section \ref{tc} we show the existence of vertex tensor categories and collect interesting consequences.

\subsection*{Acknowledgements} I appreciate useful discussions with F. Orosz~Hunziker, R. McRae, T. Arakawa, K. Kawasetsu, J. Yang, and especially D. Adamovic and D. Ridout. A big thank you to J. Yang for having given me the opportunity  to give a lecture series on this topic.

\section{Vertex operator algebra modules}\label{VOA}

Some definitions are recalled, for a detailed overview see \cite{H2}.

Let $(V, Y, {\bf 1}, \omega)$ be a vertex operator algebra. We first recall the definitions of $V$-modules.
\begin{definition}
  A \textit{weak $V$-module} (or simply {\em $V$-module}) is a vector space $W$
equipped with a vertex operator map
$$
 Y_W:  V  \rightarrow  (\mathrm{End}\,W)[[x,x^{-1}]] , \qquad
    v  \mapsto  Y_W(v,x)=\sum\limits_{n\in\mathbb{Z}} v_n\,x^{-n-1}\\
$$
satisfying the following axioms:
\begin{itemize}
\item[(i)] The lower truncation condition: For $u\in V$, $w \in W$, $Y_W(u,x)w$ has only finitely many
terms of negative powers in $x$.

 \item[(ii)] The vacuum property:
\begin{equation*}
 Y_W(\mathbf{1},x)=1_W.
 \end{equation*}

 \item[(iii)] The Jacobi identity: for $u,v\in V$,
\begin{align*}
 x_0^{-1}&\delta\left(\dfrac{x_1-x_2}{x_0}\right) Y_W(u,x_1)Y_W(v,x_2) 
 - x_0^{-1}\delta\left(\dfrac{-x_2+x_1}{x_0}\right)Y_W(v,x_2)Y_W(u,x_1)
\\
 & = x_2^{-1}\delta\left(\dfrac{x_1-x_0}{x_2}\right)Y_W(Y(u,x_0)u,x_2).
\end{align*}

\item[(iv)] The Virasoro algebra relations: if we write $Y_W(\omega,x)=\sum_{n\in\mathbb{Z}} L_n x^{-n-2}$,
then for any $m,n\in\mathbb{Z}$,
\begin{equation*}
 [L_m,L_n]=(m-n)L_{m+n}+\dfrac{m^3-m}{12} \delta_{m+n,0} c,
\end{equation*}
where $c$ is the central charge of $V$.

\item[(v)] The $L_{-1}$-derivative property: for any $v\in V$,
\begin{equation*}
 Y_W(L_{-1}v,x)=\dfrac{d}{dx} Y_W(v,x).
 \end{equation*}
\end{itemize}
\end{definition}

Let $V$ be a vertex operator algebra and let $(W, Y_W)$ be a  $V$-module with
\[
W = \coprod_{n \in \mathbb{C}}W_{[n]}
\]
where for $n \in \mathbb{C}$, $W_{[n]}$ is the generalized weight space with weight $n$. Its {\em contragredient module} is the vector space
\[
W' = \coprod_{n \in \mathbb{C}}(W_{[n]})^*,
\]
equipped with the vertex operator map $Y'$ defined by
\[
\langle Y'(v,x)w', w \rangle = \langle w', Y^{\circ}_W(v,x)w \rangle
\]
for any $v \in V$, $w' \in W'$ and $w \in W$, where
\[
Y^{\circ}_W(v,x) = Y_W(e^{xL(1)}(-x^{-2})^{L_0}v, x^{-1}),
\]
for any $v \in V$, is the {\em opposite vertex operator} (cf. \cite{FHL}).

Sometimes in the defintion of modules the Jacobi identity is replaced by duality, that is in terms of commutativity and associativity of correlation functions. 
\subsection{Intertwining Operators}

We also need the notion of logarithmic intertwining operators:
\begin{definition}\label{log:def}
{\rm
Let $(W_1,Y_1)$, $(W_2,Y_2)$ and $(W_3,Y_3)$ be $V$-modules. A {\em logarithmic intertwining
operator of type ${W_3\choose W_1\,W_2}$} is a linear map
\begin{align}\label{log:map0} \nonumber
\mathcal{Y}: &W_1\otimes W_2\to W_3\{x\}[\log x] \\ \nonumber
&w_{(1)}\otimes w_{(2)}\mapsto{\mathcal{Y}}(w_{(1)},x)w_{(2)}=\sum_{k=0}^{K}\sum_{n\in
{\mathbb C}}{w_{(1)}}_{n, k}^{
\mathcal{Y}}w_{(2)}x^{-n-1}(\log x)^{k}
\end{align}
for all $w_{(1)}\in W_1$ and $w_{(2)}\in W_2$, such that the
following conditions are satisfied:
\begin{itemize}
\item[(i)] The {\em lower truncation
condition}: for any $w_{(1)}\in W_1$, $w_{(2)}\in W_2$, $n\in
{\mathbb C}$ and $k=0, \dots, K$,
\begin{equation} \nonumber
{w_{(1)}}_{n+m, k}^{\mathcal{Y}}w_{(2)}=0\;\;\mbox{ for }\;m\in {\mathbb
N} \;\mbox{sufficiently large}.
\end{equation}
\item[(ii)] The {\em Jacobi identity}:
\begin{equation} \nonumber
\begin{split}
 x^{-1}_0\delta \bigg( {x_1-x_2\over x_0}\bigg)
Y_3(v,x_1){\mathcal{Y}(w_{(1)},x_2)w_{(2)}}
- x^{-1}_0\delta \bigg( {x_2-x_1\over -x_0}\bigg)
{\mathcal{Y}}(w_{(1)},x_2)Y_2(v,x_1)w_{(2)} \\
 = x^{-1}_2\delta \bigg( {x_1-x_0\over x_2}\bigg){
\mathcal{Y}}(Y_1(v,x_0)w_{(1)},x_2) w_{(2)}
\end{split}
\end{equation}
for $v\in V$, $w_{(1)}\in W_1$ and $w_{(2)}\in W_2$.
\item[(iii)] The {\em $L_{-1}$-derivative property:} for any
$w_{(1)}\in W_1$,
\begin{equation}\nonumber
{\mathcal{Y}}(L_{-1}w_{(1)},x)=\frac d{dx}{\mathcal{Y}}(w_{(1)},x).
\end{equation}
\end{itemize}
A logarithmic intertwining operator is called an {\it intertwining operator} if no $\log x$ appears in ${\mathcal{Y}}(w_{(1)},x)w_{(2)}$
for $w_{(1)}\in W_1$ and $w_{(2)}\in W_2$. The dimension of the space  $\mathcal{V}_{W_1 W_2}^{W_3}$ of all logarithmic intertwining
operators is called the {\it fusion rule}.}
\end{definition}
As for modules, the Jacobi identity can be replaced by duality, that is \cite{H2}
for any $w_1 \in W_1, w_2 \in W_2, w_3' \in W_3'$ and $v\in V$ and for any single valued branch $\ell(z_2)$ of the logarithm of $z_2$ in the region $z_2 \neq 0, 0 \leq \text{arg} z_2 \leq 2\pi$, the series
\begin{equation}\nonumber
\begin{split}
\left< w_3', Y_3(v, z_1) \mathcal{Y}(w_1, x_2) w_2 \right> \Big\vert_{x_2^n = e^{n \ell(z_2)}, n \in \mathbb C} \\ 
\left< w_3',  \mathcal{Y}(w_1, x_2)Y_2(v, z_1) w_2 \right> \Big\vert_{x_2^n = e^{n \ell(z_2)}, n \in \mathbb C} \\ 
\left< w_3',  \mathcal{Y}\left(Y_1(v, z_1 -z_2)w_1, x_2 \right) w_2 \right> \Big\vert_{x_2^n = e^{n \ell(z_2)}, n \in \mathbb C} \\ 
\end{split}
\end{equation}
are absolutely convergent in the regions $|z_1| > |z_2| > 0, |z_2| > |z_1| > 0, |z_2| > |z_1 - z_2| > 0$, respectively, to a common analytic function in $z_1$ and $z_2$ and can be analytically extended to a multivalued analytic function with the only possible poles at $z_1 =0$ and $z_1 =z_2$ and the only possible branch point $z_2 =0$. 

For any $r \in \mathbb Z$ and any intertwining operator  $\mathcal Y$ of type ${W_3\choose W_1\,W_2}$, one defines skew symmetry via
\begin{equation}\label{skew}
\Omega_r(\mathcal Y)(w_2, x)w_1 = e^{xL_{-1}} \mathcal Y(w_1, y)w_2 \Big\vert_{y^n = e^{n(2r+1) \pi i} x^n, n \in \mathbb C, \ln y = \ln x + (2r+1)\pi i}
\end{equation}
and has that $\Omega_r(\mathcal Y)$ is an interwining operator of type ${W_3\choose W_2\,W_1}$. We will need the following statement. 
\begin{proposition}\label{prop:intertwiner}
Let $\mathcal Y$ be an intertwining operator of type ${W_3\choose W_1\,W_2}$ and let $w_1 \in W_1, w_2 \in W_2$ and denote by $\left< w_1\right>$ and $\left< w_2\right>$
the $V$-modules generated by $w_1$ and $w_2$. If $\mathcal Y(w_1, z)w_2 =0$, then $\mathcal Y(v_1, z)v_2 =0$ for all $v_1 \in \left< w_1\right>$ and $v_2 \in \left< w_2\right>$.
\end{proposition}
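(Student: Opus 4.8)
The plan is to reduce the statement to a single ``propagation'' principle and then induct. Concretely, I would first establish: if $u \in W_1$ and $m \in W_2$ satisfy $\mathcal Y(u,z)m = 0$, then for every $v \in V$ and every $n \in \mathbb Z$ one has both (a) $\mathcal Y(u,z)(v_n m) = 0$ and (b) $\mathcal Y(v_n u, z)m = 0$. Granting this, the proposition follows by two nested inductions. Since $\langle w_2\rangle$ is spanned by vectors of the form $v^{(1)}_{n_1}\cdots v^{(r)}_{n_r} w_2$, repeated application of (a), starting from the hypothesis $\mathcal Y(w_1,z)w_2 = 0$, gives $\mathcal Y(w_1,z)v_2 = 0$ for every $v_2 \in \langle w_2\rangle$. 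Fixing any such $v_2$ and applying (b) repeatedly, using that $\langle w_1\rangle$ is spanned by vectors $v^{(1)}_{n_1}\cdots v^{(s)}_{n_s} w_1$, yields $\mathcal Y(v_1,z)v_2 = 0$ for every $v_1 \in \langle w_1\rangle$; as $v_2$ was arbitrary this is the assertion.

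The principle itself is where the duality formulation recalled above does the work. I would fix a homogeneous $w_3' \in W_3'$ and consider the three series $\langle w_3', Y_3(v,z_1)\mathcal Y(u,z_2)m\rangle$, $\langle w_3', \mathcal Y(u,z_2)Y_2(v,z_1)m\rangle$ and $\langle w_3', \mathcal Y(Y_1(v,z_1-z_2)u,z_2)m\rangle$, which by the stated convergence property are the expansions, in the regions $|z_1|>|z_2|>0$, $|z_2|>|z_1|>0$ and $|z_2|>|z_1-z_2|>0$ respectively, of one and the same analytic function $F(z_1,z_2)$. The hypothesis $\mathcal Y(u,z_2)m = 0$ makes the first series identically zero, whence $F\equiv 0$ and therefore the other two series vanish as well. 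Since $Y_2(v,z_1)m = \sum_{n\in\mathbb Z}(v_n m)\,z_1^{-n-1}$ has integer-spaced powers of $z_1$, the vanishing of the second series together with uniqueness of the Laurent expansion in $z_1$ forces $\langle w_3', \mathcal Y(u,z_2)(v_n m)\rangle = 0$ for each $n$; as $\mathcal Y$ is non-logarithmic and $w_3'$ ranges over a spanning set of $W_3'$, nondegeneracy of the contragredient pairing gives (a). Expanding the third series in integer powers of $z_0 = z_1-z_2$, coming from $Y_1(v,z_0)u = \sum_{n\in\mathbb Z}(v_n u)\,z_0^{-n-1}$, and arguing identically produces (b).

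The one point demanding care, and the main obstacle, is the passage from ``$F$ vanishes in one region'' to ``every coefficient of the expansion about another region vanishes''. This is exactly where analyticity, rather than formal algebra, is indispensable: the two operator orderings are genuinely different formal series that agree only after analytic continuation, so the Jacobi identity on its own, which expresses the commutator through the $\mathcal Y(v_j u, z)$ with $j\ge 0$, does not directly deliver (a). I would therefore invoke the identity theorem to conclude $F\equiv 0$ globally, restrict $w_3'$ to generalized weight spaces so that each expansion is a genuine log-free series whose coefficients are uniquely determined, and read off the vanishing coefficient by coefficient.
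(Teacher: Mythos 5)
Your proof is correct, and it shares the paper's central mechanism: the hypothesis kills the product series $\left<w_3', Y_3(v,z_1)\mathcal Y(w_1,x_2)w_2\right>$, the identity theorem kills the common analytic function, and coefficient extraction in the remaining expansions propagates the vanishing. The difference is in how the first tensor slot is reached. The paper propagates over $\left<w_2\right>$ exactly as you do, using the commutativity pair of series, but it never touches the third (iterate) series: to handle $\left<w_1\right>$ it invokes skew-symmetry \eqref{skew}, concluding from $\mathcal Y(w_1,y)v_2=0$ that $\Omega_r(\mathcal Y)(v_2,x)w_1=0$, reruns the same commutativity argument for the intertwining operator $\Omega_r(\mathcal Y)$ of type ${W_3\choose W_2\,W_1}$, and then applies skew-symmetry once more to return to $\mathcal Y$. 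You instead derive your propagation step (b) directly from the associativity series $\left<w_3', \mathcal Y(Y_1(v,z_1-z_2)w_1,x_2)w_2\right>$, expanding in $z_0=z_1-z_2$ and using uniqueness of Laurent expansions, and then run two explicit nested inductions. Both routes are sound. Yours buys uniformity (one principle for both slots, no auxiliary operator $\Omega_r$) at the price of using the associativity part of duality; the paper's gets by with only the commutativity part of duality plus the already-recorded skew-symmetry construction. Your closing caveats (homogeneity of $w_3'$, non-logarithmic $\mathcal Y$ so each matrix coefficient is a single monomial in $x_2$, and genuine analytic continuation rather than a formal Jacobi-identity manipulation) are precisely the points the paper leaves implicit, and they are handled correctly.
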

\begin{proof}
If $\mathcal Y(w_1, z)w_2 =0$, then 
\[
\left< w_3', Y_3(v, z_1) \mathcal{Y}(w_1, x_2) w_2 \right>  = 0
\]
for all $w_3'\ \in W_3'$ and all $v\in V$ and so 
\[
\left< w_3',  \mathcal{Y}(w_1, x_2)Y_2(v, z_1) w_2 \right> = 0 
\]
for all $w_3'\ \in W_3'$ and all $v\in V$  as well. Hence 
\[
 \mathcal{Y}(w_1, x_2)Y_2(v, z_1) w_2 = 0 
\]
for all $v\in V$ in particular 
\[
 \mathcal{Y}(w_1, x_2) v_n w_2 = 0 
\]
for all $v\in V$ and $n \in \mathbb Z$, i.e. 
\[
 \mathcal{Y}(w_1, x_2) v_2 = 0
\]
for all $v_2 \in \left< w_2\right>$.
By skew-symmetry \eqref{skew} it follows that
\[
\Omega_r(\mathcal Y)(v_2, x)w_1  = 0
\]
for all $v_2 \in \left< w_2\right>$. Now applying the exact same argument as above to  $\Omega_r(\mathcal Y)$ gives 
\[
\Omega_r(\mathcal Y)(v_2, x)v_1  = 0
\]
for all $v_1 \in \left< w_1\right>$ and $v_2 \in \left< w_2\right>$ and applying skew-symmetry again yields $\mathcal Y(v_1, z)v_2 =0$ for all $v_1 \in \left< w_1\right>$ and $v_2 \in \left< w_2\right>$.
\end{proof}

\section{Virasoro vertex algebra} \label{sec:vir}

We review the Virasoro vertex algebra using \cite{C-Vir}.
The Virasoro algebra is the Lie algebra with basis $L_n$ for $n \in \ZZ$ and $C$
 with the commutation relations
\begin{equation}
[L_m, L_n] = (m-n)L_{m+n}+\frac{m^3-m}{12}\delta_{m+n,0} C,
\end{equation}
and $C$ is central. Let $h, c \in \CC$, then the Verma module $V(c, h)$ of highest weight $h$ and central charge $c$ is 
the module generated by a vector $\one_{c, h}$, such that 
\[
L_n \one_{c, h} = 0 \quad \text{for} \  n > 0, \qquad  L_0 \one_{c, h} = h \one_{c. h} \quad \text{and} \quad C \one_{c, h} = c \one_{c, h}
\]
and such that the $L_n$ for negative $n$ act freely on it. 
$V(c, h)$ need not be simple, for example the vector $L_{-1}\one_{c, 0}$ is a singular vector that generates a proper submodule, call it $S(c, 0)$. This quotient will be denoted by
\[
\vir^k = V(c, 0)/S(c, 0). 
\]
Where we take $c$ to be of the form $c=c(t)=13-6t-6t^{-1}$ and set $k = t-2$. 
Frenkel and Zhu showed that $\vir^k$ can be given the structure of a vertex algebra, the universal Virasoro vertex algebra of central charge $c$ \cite{FZ1}. 
We will denote this vertex algebra also by $\vir^k$ and by $\vir_k$ its simple quotient. 

Singular vectors are understood in general: 
\begin{proposition}\textup{\cite{FF}}\label{thm:vermared}
For
$r,s\in \ZZ_{\ge1}$ and $t \in \CC\setminus\{0\}$, define
\begin{equation} \label{eq:virpara}
c=c(t)=13-6t-6t^{-1}, \quad
h=h_{r,s}(t)=\frac{s^2-1}{4}t-\frac{rs-1}{2}+\frac{r^2-1}{4}t^{-1}.
\end{equation}
\begin{enumerate}
	\item If there exist $r,s\in \ZZ_{\ge1}$ and $t \in \CC\setminus\{0\}$ such that $c$ and $h$ satisfy \eqref{eq:virpara}, then there is a singular vector of weight $h+rs$ in the Verma module $V(c,h)$.
	\item Conversely, if $V(c,h)$ possesses a non-trivial singular vector, then there exist $r, s \in \ZZ_{\ge1}$ and $t \in \CC \backslash \{0\}$ such that \eqref{eq:virpara} holds.
	\item For each $N$, there exists at most one singular vector of weight $h+N$ in $V(c,h)$, up to scalar multiples.
\end{enumerate}
\end{proposition}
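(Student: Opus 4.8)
The backbone of the whole statement is the \emph{contravariant (Shapovalov) form} on $V(c,h)$: the unique symmetric bilinear form $\langle\cdot,\cdot\rangle$ with $\langle\one_{c,h},\one_{c,h}\rangle=1$ and $L_n^\dagger=L_{-n}$. It pairs distinct weight spaces trivially, so it restricts to a form $\langle\cdot,\cdot\rangle_N$ on the finite-dimensional weight space $V(c,h)_N$ of weight $h+N$, which carries the PBW basis $\{L_{-\lambda}\one_{c,h}\}$ indexed by partitions $\lambda$ of $N$ and hence has dimension $p(N)$. The key point is that the radical of $\langle\cdot,\cdot\rangle$ is precisely the maximal proper submodule, so every singular vector lies in the radical; in particular, the existence of a singular vector at level $N$ forces $\det\langle\cdot,\cdot\rangle_N=0$. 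The essential input is the Kac determinant formula
\[
\det\langle\cdot,\cdot\rangle_N \;=\; K_N\prod_{\substack{r,s\ge 1\\ rs\le N}}\bigl(h-h_{r,s}(t)\bigr)^{p(N-rs)},
\]
with $K_N$ a nonzero constant independent of $h$ and $t$, $p$ the partition function, and $h_{r,s}(t)$ exactly as in \eqref{eq:virpara}. I would either quote this or re-derive it by locating each factor through explicit singular vectors and matching the leading behaviour in $t$.

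Granting the determinant formula, the converse statement (2) is immediate: a nontrivial singular vector at some level $N$ makes $\langle\cdot,\cdot\rangle_N$ degenerate, so $\det\langle\cdot,\cdot\rangle_N=0$; choosing $t$ with $c=c(t)=13-6t-6t^{-1}$ (a quadratic in $t$, always solvable over $\CC$) and reading off which factor vanishes produces $r,s\in\ZZ_{\ge1}$ with $rs\le N$ and $h=h_{r,s}(t)$, which is \eqref{eq:virpara}.

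For the existence statement (1) I would first treat generic $t$. Fixing $r,s$ and setting $(c,h)=(c(t),h_{r,s}(t))$, for generic $t$ the numbers $h_{r',s'}(t)$ are pairwise distinct, so $\det\langle\cdot,\cdot\rangle_M\neq0$ for every $M<rs$ while the factor vanishes to first order at level $rs$. Hence the radical of $V(c,h)$ first appears at level $rs$, where it is one-dimensional; a nonzero radical vector $w$ occurring at the \emph{lowest} level must be annihilated by all $L_n$, $n>0$ (otherwise some $L_n w$, $n>0$, would be a nonzero radical vector at level $<rs$), i.e. it is a singular vector of weight $h+rs$. To pass from generic $t$ to all $t\neq0$ I would run the Kac--Kazhdan analytic-continuation argument: normalise by the coefficient of $L_{-1}^{rs}\one_{c,h}$, solve the singular-vector equations $L_1v=L_2v=0$ recursively so that the remaining coefficients are rational in $t$ with controlled poles, and verify that after clearing denominators the specialisation to any $t\neq0$ stays nonzero and singular.

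Statement (3) is where I expect the real work. Writing a level-$N$ singular vector as $v=\sum_{\lambda\vdash N}a_\lambda L_{-\lambda}\one_{c,h}$, the conditions $L_1v=L_2v=0$ form a linear system in the $a_\lambda$; since $L_1,L_2$ generate the positive part of the Virasoro algebra, its solution space is exactly the space of singular vectors at level $N$, and I must bound it by one dimension for \emph{all} $(c,h)$. For generic $t$ this is the one-dimensional radical above, but kernel dimension is only upper semicontinuous, so specialisation does not rule out a jump at degenerate parameters where several $h_{r,s}(t)$ collide. The plan is therefore a triangularity argument independent of genericity: order the partitions of $N$ suitably and show that the equations force $v$ to be determined by a single leading coefficient (e.g.\ that of $L_{-1}^{N}\one_{c,h}$), so that the map sending $v$ to that coefficient is injective on singular vectors. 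Establishing this uniform leading-coefficient normalisation --- equivalently the multiplicity-one property $\dim\Hom\bigl(V(c,h+N),V(c,h)\bigr)\le1$ at every point --- is the main obstacle, and is the structural heart of the Feigin--Fuchs analysis I would ultimately lean on.
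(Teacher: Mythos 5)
The paper offers no proof of Proposition~\ref{thm:vermared}: it is imported from Feigin--Fuchs \cite{FF}, with the auxiliary structural fact about singular vectors quoted separately as Proposition~\ref{sing} from Astashkevich \cite{As}. So your attempt can only be measured against the classical arguments in those references, and on that score your outline of parts (1) and (2) is the standard, correct one: the radical of the contravariant form is a submodule (so a lowest-level radical vector is singular), the Kac determinant formula locates the zeros, genericity in $t$ produces a singular vector at level exactly $rs$, and the passage to arbitrary $t\neq 0$ works by clearing denominators and extracting the leading Taylor coefficient at the bad point --- if $v(t)$ solves $L_1v(t)=L_2v(t)=0$ identically in $t$ and one writes $v(t)=(t-t_0)^m w+O\bigl((t-t_0)^{m+1}\bigr)$ with $w\neq 0$, then $w$ is a nonzero singular vector at $t_0$, since the module structure maps also depend polynomially on $t$. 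The one caveat is that the determinant formula, which you import wholesale, is the real content of (1) and (2); but that is a legitimate citation, on the same footing as the paper's own citation of \cite{FF}. (Note also that only the zero locus of the determinant is needed for (1) and (2); the multiplicities $p(N-rs)$ and the one-dimensionality of the radical you mention are not used.)

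Part (3) is where your write-up has an actual gap, and you say so yourself: you reduce uniqueness to the statement that every singular vector of weight $h+N$ has nonvanishing coefficient of $L_{-1}^N\one_{c,h}$ --- equivalently $\dim\Hom\bigl(V(c,h+N),V(c,h)\bigr)\le 1$ --- and then defer that statement to Feigin--Fuchs. Be aware that this missing lemma is precisely Proposition~\ref{sing} of the paper (Astashkevich), and once it is granted, (3) is a one-line argument: normalize two singular vectors of weight $h+N$ so that each has $L_{-1}^N$-coefficient equal to $1$; their difference is a singular vector whose $L_{-1}^N$-coefficient vanishes, hence is zero by the same proposition. So your proposal, completed by the result the paper quotes immediately afterwards, does establish all three parts; as written, however, part (3) is deferred rather than proven, which is exactly the status the paper itself assigns to the whole proposition by citing \cite{FF}.
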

The set of special weights is denoted by $H_c$
\[
H_c = \left\{ h_{r, s}(t) =\frac{s^2-1}{4}t-\frac{rs-1}{2}+\frac{r^2-1}{4}t^{-1}\, \Big| \, r,s\in \ZZ_{\ge1} \right\}.
\]

\begin{proposition}\textup{\cite{As}}\label{sing}
If $V(c,h)$ has a
singular vector $v$ of conformal weight $h+N$,
then
\begin{align}
v=\sum_{ |I| =N}a_{I}(c,h)L_{-I}\one_{c,h},
\end{align}
where $L_{-I}=L_{-i_{1}}\cdots L_{-i_{n}}$ and the sum is over sequences $I=\{i_1, \dots, i_n\}$ of ordered $n$-tuples $i_1\ge \dots\ge i_n$ with $ |I| =i_1+\dots+i_n=N$. Moreover, the
coefficients $a_{I}(c,h)$ depend polynomially on $c$ and $h$ and the coefficient $a_{\{1,\dots,1\}}(c,h)$ of $L_{-1}^{N}$ may be chosen to be $1$.
\end{proposition}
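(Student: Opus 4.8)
The plan is to combine the Poincaré–Birkhoff–Witt theorem with the uniqueness already recorded in Proposition \ref{thm:vermared}(3), and then to read the coefficients off a linear system whose entries are visibly polynomial in $c$ and $h$. First I would fix the shape of $v$. Since $V(c,h)=U(\text{Vir})\otimes_{U(\text{Vir}_{\ge0})}\CC\one_{c,h}$ and the monomials $L_{-I}\one_{c,h}$ with $i_1\ge\cdots\ge i_n\ge1$ form a PBW basis, the generalized $L_0$-eigenspace of eigenvalue $h+N$ is spanned exactly by those $L_{-I}\one_{c,h}$ with $|I|=N$. Hence every weight-$(h+N)$ vector, and in particular any singular vector, has the asserted form $v=\sum_{|I|=N}a_I(c,h)L_{-I}\one_{c,h}$, a finite sum over ordered tuples. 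This disposes of the first assertion and reduces the proposition to a statement about the coefficients $a_I$.

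Next I would set up the defining equations. A weight-$(h+N)$ vector is singular iff $L_m v=0$ for all $m>0$, and since $[L_1,L_m]=(1-m)L_{m+1}$ the elements $L_1,L_2$ generate $\text{Vir}_{>0}$, so it suffices to impose $L_1v=0$ and $L_2v=0$. Expanding $L_1v$ and $L_2v$ in the PBW bases of weights $h+N-1$ and $h+N-2$ by moving the raising operators to the right, the only structure constants that occur are the integers from $[L_m,L_{-i}]=(m+i)L_{m-i}$, together with $L_0\one_{c,h}=h\one_{c,h}$ and the central contribution $\tfrac{m^3-m}{12}c$; all of these are affine-linear in $c$ and $h$. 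Thus the singular-vector condition becomes a homogeneous linear system $A(c,h)\mathbf a=0$ with $A(c,h)$ a matrix of polynomials in $c,h$; equivalently, at least when $N$ is the lowest level carrying a singular vector, $\mathbf a$ lies in the kernel of the Gram matrix $M_N(c,h)$ of the contravariant (Shapovalov) form, whose determinant is the Kac determinant. By Proposition \ref{thm:vermared}(3) this solution space is at most one-dimensional, so at a point where $V(c,h)$ possesses a (lowest-level) singular vector at level $N$ the kernel is exactly a line, spanned by a nonzero column of the adjugate of $M_N(c,h)$; the entries of that adjugate are minors of $M_N$ and hence polynomials in $c,h$.

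The genuinely hard point, and the one I expect to be the main obstacle, is the interplay between this polynomiality and the normalization $a_{\{1,\dots,1\}}=1$. The adjugate construction produces a solution with polynomial entries but defined only up to an overall scalar that is a function on the singular locus, and dividing by the $L_{-1}^N$-coefficient to normalize could a priori introduce denominators; one must therefore show both that this coefficient does not vanish identically on the relevant component of the singular locus and that the common polynomial factor carried by the adjugate column is exactly cancelled upon normalization, so that the normalized $a_I$ remain polynomial. This is precisely the content of the classical Feigin–Fuchs analysis: the factorization of the Kac determinant controls the minors finely enough to guarantee that the normalized coefficients are polynomials in $c$ and $h$, while the nonvanishing of the $L_{-1}^N$-term can be exhibited, for instance, in the free-field (Fock) realization, where the screening-operator construction of the singular vector has a manifestly nonzero leading term along $L_{-1}^N\one_{c,h}$. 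The polynomial dependence would be established along these lines, with the PBW reduction and the linear-algebra step being routine and the determinantal/normalization analysis being the substantive part.
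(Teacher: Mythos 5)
There is no in-paper proof to compare against: the paper imports Proposition \ref{sing} verbatim from Astashkevich \cite{As} (the same statement is quoted the same way in \cite{C-Vir}), so your proposal can only be judged as a standalone proof, and as such it has a genuine gap. The PBW reduction and the observation that singularity is equivalent to the polynomial linear system $L_1v=L_2v=0$ are correct but routine; the actual content of the proposition is (a) that the coefficient of $L_{-1}^{N}\one_{c,h}$ in any singular vector is nonzero, so the normalization $a_{\{1,\dots,1\}}=1$ is possible, and (b) that after this normalization all $a_I$ are polynomial in $(c,h)$. For exactly these two points you write that they are ``precisely the content of the classical Feigin--Fuchs analysis'' and that the nonvanishing ``can be exhibited, for instance, in the free-field (Fock) realization,'' with no argument given. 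Neither is routine: in a Fock module the natural PBW basis consists of Heisenberg monomials, so the $L_{-1}^{N}$-coefficient of a screened vector is not ``manifest''; and dividing an adjugate column by its $L_{-1}^{N}$-entry a priori produces rational functions on the singular curve, so polynomiality after normalization requires the fine factorization analysis you explicitly postpone. Your own closing sentence (``the determinantal/normalization analysis being the substantive part'') concedes that the proof of the proposition, rather than its reduction to linear algebra, is missing; in effect you end where the paper begins, by deferring to \cite{As} and \cite{FF}.

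There is also a concrete technical flaw in the mechanism you propose. You pass from the system $L_1v=L_2v=0$ (whose solution space at level $N$ is indeed at most one-dimensional by Proposition \ref{thm:vermared}(3)) to the kernel of the Shapovalov Gram matrix $M_N(c,h)$, calling them equivalent. They are not: the kernel of $M_N(c,h)$ is the whole weight-$(h+N)$ subspace of the maximal proper submodule, which contains all level-$N$ descendants of singular vectors of lower level. On the curves of Proposition \ref{thm:vermared} such lower singular vectors typically exist, the corank of $M_N$ is then at least $2$, the adjugate of $M_N$ vanishes identically, and your construction produces no solution at all, let alone one with polynomial entries. Your hedge ``at least when $N$ is the lowest level carrying a singular vector'' therefore restricts the argument to a special case, whereas Proposition \ref{sing} --- and its use in the proof of Theorem \ref{keythm} --- is a statement about every singular vector at every level.
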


In general we will denote the simple quotient of $V(c,h)$ by $L(c,h)$.
For $r,s\in \ZZ_{\ge1}$ and $t \in \CC\setminus\{0\}$ we denote the simple quotient of $V(c(t), h_{r, s}(t))$ also
by  $\vir^{t-2}_{ r, s}$, that is $L(c(t), h_{r, s}(t))= \vir^{t-2}_{ r, s}$, and if the central charge is clear, then we just write $\vir_{r, s}$.  

Let $t = \frac{u}{v}$ with $u, v  \in \ZZ_{\geq 2}$ and coprime and set
$k = t-2$.
Then  $\vir_k$ is rational \cite{W} and a complete list of simple modules is given by $\vir^k_{r, s}$ with $1\leq r \leq u-1$, $1\leq s \leq v-1$. In addition there are the isomorphisms $\vir^k_{r, s} \cong \vir^k_{u-r, v-s}$ and the fusion rules are given by
\begin{equation}\label{eq:fus-vir}
\begin{split} 
\vir^k_{r, s} \boxtimes \vir^k_{r', s'} &\cong \bigoplus_{r'' =1}^{u-1} \bigoplus_{s'' =1}^{v-1} N^{u, v\quad (r'', s'')}_{(r, s), (r', s')} \vir^k_{r'', s''} \\
N^{u, v'\quad (r'', s'')}_{(r, s), (r', s')} &= N^{u\ \  r''}_{r, r'}N^{v\ \  s''}_{s, s'} 
\end{split}
\end{equation}
\[
N^{q\ \ t''}_{t, t'} = \begin{cases} 1 & \text{if} \ |t-t'| + 1 \leq t'' \leq \text{min}\{ t+t'-1, 2q - t -t' -1\} \ \text{and} \ t+t'+t'' \ \text{odd} \\ 0 & \text{else} \end{cases}
\]

 \section{The affine vertex operator algebra of $\sltwo$ }\label{sltwo}

We fix $k = - 2 + \frac{u}{v}$ with $u, v$ co-prime positive integers, both at least equal to two, so that $k$ is admissible for $\sltwo$. 
Let $\{ f, h, e\}$ be a basis of $\sltwo$ with commutation relations
\[
[h, e] = 2e, \qquad [h, f] -2f, \qquad [e, f] = h.
\]
The Casimir in this basis is \[
C = \frac{hh}{2} + ef  + fe.
\]
The affinization $\asltwo$ has basis $\{ f_n, h_n, e_n |  n \in\mathbb Z\} \cup \{ K, d \}$ with $K$ central, $d$ the usual derivation and the additional non-zero commutation relations are
\begin{equation}
\begin{split}
[h_m, e_n] &= 2 e_{m+n}, \ \qquad [h_m, h_n] = 2m \delta_{m+n, 0}K,  \\
[h_m, f_n] &= -2 f_{m+n}, \qquad [e_m, f_n] = h_{m+n} + m\delta_{m+n, 0}K. 
\end{split}
\end{equation}
Affine Weyl translation induce spectral flow automorphisms $\{ \sigma^\ell | \ell \in \mathbb Z\}$  which act explicitely as 
\begin{equation}
\sigma^\ell(e_n) = e_{n - \ell}, \qquad \sigma^\ell(h_n) = h_n + \delta_{n, 0} \ell K, \qquad \sigma^\ell(f_n) = f_{n+\ell}.
\end{equation}
The affine vertex operator algebra $L_k(\sltwo)$ is then strongly generated by the fields
\[
f(z) = \sum_{n \in \mathbb Z} f_n z^{-n-1}, \qquad 
h(z) = \sum_{n \in \mathbb Z} h_n z^{-n-1}, \qquad 
e(z) = \sum_{n \in \mathbb Z} e_n z^{-n-1}, \qquad 
\]
and $K$ acts by the scalar $k \in \mathbb C$ on the underlying vector space.

\subsection{Modules}

 Let $\omega$ be the fundamental weight of $\sltwo$.
 We are interested in three types of $\sltwo$-modules. Firstly the integrable simple modules $L_r$ of highest-weight $(r-1)\omega$ and dimension $r$; secondly for $\lambda \in \mathbb C$, the 
  irreducible highest-weight (lowest-weight) representation of  highest-weight (lowest-weight) $\lambda\omega$ is denoted by $D^+_{\lambda}$ ($D^-_\lambda$);  and thirdly 
 dense  weight modules of weight $\lambda\omega$ and Casimir eigenvalue $\Delta$ will be denoted by $R_{\lambda, \Delta}$ for generic $(\lambda, \Delta)$. These modules have basis $\{v_{\lambda +2n} |  n \in \mathbb Z\}$ and the $\sltwo$ action is given by
  \[
  C v_{\lambda+2n} = \Delta v_{\lambda +2n}, \qquad
  h  v_{\lambda+2n} = (\lambda +2n) v_{\lambda +2n}, \qquad
  e v_{\lambda+2n} = v_{\lambda +2(n+1)} \qquad
  \]
 and these completely specify the action of $f$:
 \begin{equation}\nonumber
 \begin{split}
 f v_{\lambda+ 2(n+1)} &= ef v_{\lambda+2n} = \frac{1}{2}\left(C - \frac{hh}{2} - h\right) v_{\lambda+2n} \\ &= \frac{1}{4} \left( 2 \Delta - (\lambda+2n)(\lambda+2(n+1)) \right)v_{\lambda+2n}. 
 \end{split}
 \end{equation}
 
  These types of modules belong to a coherent family parameterized by $\h/Q^\vee \cong \mathbb C/2\mathbb Z$. Generically these modules are simple and at special values there are  two modules $R^\pm_{\lambda, \Delta}$, here the superscript indicates that the dense module has a highest ($+$) or lowest ($-$) weight module as submodule. For example in the instance of $\lambda$ not dominant integral and provided that $\Delta = \Delta_\lambda$ is the Casimir eigenvalue on $D^+_\lambda$, then $R^+_{\lambda, \Delta_\lambda}$ is the non-trivial extension
\[
0 \rightarrow D^+_\lambda \rightarrow R^+_{\lambda, \Delta_\lambda} \rightarrow D^-_{\lambda + 2} \rightarrow 0, \qquad \in \ \Ext^1_\cD(D^-_{\lambda + 2}, D^+_\lambda).
\]
Let $M$ be an $\sltwo$-module, then we denote by $\widehat{M}^k$ the generalized Verma module of $\asltwo$ at level $k$ and by $\widehat{M}_k$
its almost simple quotient, that is the quotient by all submodules that  intersect the top level of $\widehat{M}^k$ trivially.
The common notation for modules of the affine \voa{}  is
\[
\widehat{L}_{r,k} = \slirr{r}^k, \qquad \widehat{D}^{\pm}_{\lambda_{r,s}, k} = \sldis{r, s}^{\pm, k}, \qquad
\widehat{R}_{\lambda, \Delta_{r, s}, k} = \slrel{\lambda}{r, s}^k
 \qquad
\widehat{R}^\pm_{\lambda, \Delta_{r, s}, k} = \slrel{\lambda}{r, s}^{\pm, k}
\]
with the short-hand notations
\begin{equation}
	k+2 = t = \frac{u}{v}, \qquad u \in \ZZ_{ \geq  2}, \quad v \in \ZZ_{> 1}, \quad \text{gcd} (u,v) = 1.
\end{equation}
and introduce
\begin{equation} \label{eq:DefLambda}
		\lambda_{r,s} = r - 1 - ts.
	\end{equation}
	and 
	\begin{equation} \label{eq:DefDelta}
	\Delta_{r,s} = \frac{(r-ts)^2-1}{4t} = \frac{(vr-us)^2-v^2}{4uv}
\end{equation}
If the level $k$ is clear, then we will omit the superscript. 
Finally for a module $\widehat{M}$ we denote by $\sigma^{\ell}(\widehat{M})$ the module on which the action is twisted by $\sigma^{\ell}$, that is the underlying vector space is isomorphic to $\widehat{M}$ with isomorphism denoted by $\sigma^\ell$ as well and action for $x \in \asltwo$ and $m \in \widehat{M}$
\[
x. \sigma^\ell(m) = \sigma^{\ell}\left( \sigma^{-\ell}(x).m\right).
\]
Let $\sWtsl{k}$ be the category of weight modules of the simple affine vertex operator algebra of $\sltwo$ as defined in \cite{ACK}. That is 
 the category of finitely generated smooth weight modules at level $k$ with finite-dimensional weight spaces  that are $L_k(\sltwo)$-modules. This means  $L_k(\sltwo)$-modules $\mathcal M$ that are finitely generated smooth $\widehat{\sltwo}$-modules 
 at level $k$ such that the Cartan subalgebra $\mathfrak{h}=\mathbb C h_0$ acts semisimply and so $\mathcal M$ is graded by conformal weight and $\mathfrak{h}$, that is 
\[
\mathcal M = \bigoplus_{\lambda, \Delta} \mathcal M_{\lambda, \Delta}
\]
and for each $\lambda$ there exists $h_\lambda$, such that $\mc{M}_{\lambda, \Delta} = 0$ for $\text{Re}(\Delta) < \text{Re}(h_\lambda)$
and
$\text{dim}   \mathcal M_{\lambda, \Delta} < \infty$ for any $(\lambda, \Delta)$.

The classification of simple modules is essential due to Adamovi\'c and Milas; though  that every simple module is the spectral flow image of a lower bounded is easily shown in \cite{ACK}.
\begin{theorem}[Adamovi\'c-Milas \cite{AdM-MRL}, see also \cite{RW2}] \label{rhwsimples}
	Let $k = -2 + \frac{u}{v}$ be an admissible level. Then, 
	the simple objects in $\sWtsl{k}$ 
	are exhausted, up to isomorphism, by the following list:
	\begin{itemize}
		\item The $\sigma^\ell(\slirr{r})$, for $r = 1,\dots,u-1$ and $\ell \in \mathbb Z$;
		\item The $\sigma^\ell(\sldis{r,s}^\pm)$, for $r = 1,\dots,u-1$ and $s = 1,\dots,v-1$ and $\ell \in \mathbb Z$;
		\item The $\sigma^\ell(\slrel{\lambda}{r,s})$, for $r = 1,\dots,u-1$, $s = 1,\dots,v-1$ and $\lambda \in \alg{h}^*$ with $\lambda \neq \lambda_{r,s}, \lambda_{u-r,v-s} \pmod{\rlat}$  and $\ell \in \mathbb Z$.
		\end{itemize}
The only  identifications are $\sigma^\ell(\slrel{\lambda}{r,s}) = \sigma^\ell(\slrel{\lambda}{u-r,v-s})$, $\sigma^\ell(\slrel{\lambda}{r,s}) = \sigma^\ell(\slrel{\mu}{r,s})$, if $\lambda = \mu \pmod{\rlat}$,  $\sigma^\ell(\slirr{r})  = \sigma^{\ell \mp 1}  (\sldis{u-r,v-1}^\pm)$ and $\sigma^\ell(\sldis{r,s}^+) = \sigma^{\ell+1}(\sldis{u-r,v-s-1}^-)$ and here $s\neq v-1$. 
\end{theorem}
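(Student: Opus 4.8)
The plan is to classify the simple objects of $\sWtsl{k}$ in two stages: first reduce an arbitrary simple weight module to a lower-bounded one by spectral flow, and then classify lower-bounded simple modules through Zhu's algebra. Let $\mathcal M\in\sWtsl{k}$ be simple, with grading $\mathcal M=\bigoplus_{\lambda,\Delta}\mathcal M_{\lambda,\Delta}$, finite-dimensional joint weight spaces, and $\Delta$ bounded below along each fixed $\lambda$. The first step is to observe, following \cite{ACK}, that the support of a finitely generated smooth weight module forces a \emph{global} lower bound on $L_0$ after a finite spectral-flow shift; hence after applying a suitable $\sigma^\ell$ one may assume $\mathcal M$ is lower-bounded. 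Since each $\sigma^\ell$ is an auto-equivalence of $\sWtsl{k}$ permuting simple objects, it suffices to enumerate the lower-bounded simples and then close the list under $\sigma^\ell$.

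Next I would pass to Zhu's algebra. A lower-bounded simple $L_k(\sltwo)$-module is generated by its top $T=\mathcal M_{\min}$, the minimal-conformal-weight subspace, which is a simple weight module for the zero-mode copy of $\sltwo$ on which $L_0$ and the zero-mode Casimir $C$ act by scalars. By Zhu's theorem the assignment $\mathcal M\mapsto T$ is a bijection between isomorphism classes of lower-bounded simple $L_k(\sltwo)$-modules and simple modules of $A\big(L_k(\sltwo)\big)$. Since $A\big(V^k(\sltwo)\big)\cong U(\sltwo)$, passing to the simple quotient gives $A\big(L_k(\sltwo)\big)\cong U(\sltwo)/I_k$, where $I_k$ is the two-sided ideal generated by the Zhu image of the (unique up to scalar) singular vector generating the maximal submodule of $V^k(\sltwo)$. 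The problem thus becomes: classify the simple $\sltwo$-weight modules annihilated by $I_k$.

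Here I would invoke Block's classification of simple $\sltwo$-weight modules: every such module is a finite-dimensional $L_r$, a highest- or lowest-weight module $D^\pm_\lambda$, or a dense module $R_{\lambda,\Delta}$ with $\h/Q^\vee$-orbit of $\lambda$ and Casimir eigenvalue $\Delta$. The key computation, due to Adamovi\'c--Milas \cite{AdM-MRL}, is to evaluate the generator of $I_k$ on each family. Because the Zhu image of the singular vector is an explicit polynomial in $e,f,h$ whose action on a simple weight module is governed by $\lambda\bmod\rlat$ and $\Delta$, the relation $I_k=0$ translates into polynomial constraints that force exactly the quantizations $r=1,\dots,u-1$, $s=1,\dots,v-1$ together with $\Delta=\Delta_{r,s}$ and $\lambda\equiv\lambda_{r,s}$ from \eqref{eq:DefLambda}--\eqref{eq:DefDelta}. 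Dense tops survive only at the special Casimir values $\Delta_{r,s}$, yielding the $\slrel{\lambda}{r,s}$; when $\lambda$ degenerates to $\lambda_{r,s}$ or $\lambda_{u-r,v-s}$ the dense module reduces and one recovers the $\sldis{r,s}^\pm$; and the finite-dimensional tops give the $\slirr{r}$. Finite-dimensionality of the joint weight spaces of the induced affine modules is automatic, since each conformal grade is built from finitely many creation modes acting on a top with finite-dimensional $h$-weight spaces.

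Finally the coincidences are obtained by computing spectral flow on highest weights explicitly: $\sigma$ shifts the $h$-weight and conformal weight by fixed amounts, and matching these against the parameter symmetry $(r,s)\leftrightarrow(u-r,v-s)$ produces $\sigma^\ell(\slirr{r})=\sigma^{\ell\mp1}(\sldis{u-r,v-1}^\pm)$ and $\sigma^\ell(\sldis{r,s}^+)=\sigma^{\ell+1}(\sldis{u-r,v-s-1}^-)$, while the $\rlat$-periodicity of the dense parameter together with $(r,s)\leftrightarrow(u-r,v-s)$ yields the identifications among the $\slrel{\lambda}{r,s}$. I expect the genuine obstacle to be the explicit determination of $I_k$ and the verification that \emph{precisely} these modules satisfy the relation and no others---in particular that no dense module at a non-special Casimir value and no lower-bounded module outside the list survives. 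This is exactly the singular-vector computation of \cite{AdM-MRL}, and the delicate point is controlling the boundary behaviour where dense modules specialize to discrete highest/lowest-weight modules, which is where the careful bookkeeping underlying the stated identifications is needed.
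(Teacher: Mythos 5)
The paper does not prove this theorem: it is quoted from Adamovi\'c--Milas \cite{AdM-MRL} and Ridout--Wood \cite{RW2}, with the reduction of arbitrary simple weight modules to lower-bounded ones by spectral flow attributed to \cite{ACK}. Your outline --- spectral-flow reduction, Zhu's bijection with simple modules over $A(L_k(\sltwo))\cong U(\sltwo)/I_k$, evaluation of the singular-vector constraint on Block's three families of simple $\sltwo$-weight modules, and explicit spectral-flow bookkeeping for the identifications --- is precisely the strategy of those cited references, i.e.\ essentially the same approach the paper relies on, with the genuinely hard step (the singular-vector computation) deferred to \cite{AdM-MRL} exactly as you acknowledge.
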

The simple $\sigma^\ell(\slrel{\lambda}{r,s})$ are also projective \cite{ACK}, while the projective cover $\sigma^\ell(\slproj{r,s}^\pm)$ of $\sigma^\ell(\sldis{r,s}^\pm)$ has four composition factors and we will describe it in a moment. 
Firstly we have
\begin{proposition} \label{existelle}
\textup{\cite{Ad1, Kawrel19}}
There exist indecomposable modules $\sfmod{\ell}{\slindrel{r,s}^+}$ fitting in the non-split exact sequences
	\begin{equation} \label{es:DED}
	\begin{split}
		\dses{\sfmod{\ell}{\sldis{r,s}^+}}{\sfmod{\ell}{\slindrel{r,s}^+}}{\sfmod{\ell}{\sldis{u-r,v-s}^-}}, \\
		\dses{\sfmod{\ell}{\sldis{r,s}^-}}{\sfmod{\ell}{\slindrel{r,s}^-}}{\sfmod{\ell}{\sldis{u-r,v-s}^+}},
		\end{split}
	\end{equation}
for $\ell \in\ZZ, r = 1, \dots,  u-1, s = 1, \dots, v-1$. 	
\end{proposition}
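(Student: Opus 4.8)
The plan is to realise $\slindrel{r,s}^+$ as an affine module whose lowest conformal-weight space is the degenerate dense $\sltwo$-module $R^+$, to read off the exact sequence from the $\sltwo$-structure of that top space, and to deduce non-splitting from the non-splitting of the top. First I reduce to $\ell=0$ by spectral flow: since each $\sigma^\ell$ is an invertible exact endofunctor of $\sWtsl{k}$, it carries a non-split short exact sequence to a non-split one, so it suffices to produce
\[
\dses{\sldis{r,s}^+}{\slindrel{r,s}^+}{\sldis{u-r,v-s}^-}
\]
and, by the symmetric construction (replacing the highest-weight-sub dense module $R^+$ by its lowest-weight-sub counterpart $R^-$, equivalently applying the conjugation automorphism $\conjaut\colon e\leftrightarrow f,\ h\mapsto -h$, which exchanges highest- and lowest-weight modules), the second sequence of \eqref{es:DED}.

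For the construction I start from the datum already recorded in Section~\ref{sltwo}: at $(\lambda,\Delta)=(\lambda_{r,s},\Delta_{r,s})$ the dense module degenerates and $R^+_{\lambda_{r,s},\Delta_{r,s}}$ is the non-split $\sltwo$-extension $0\to D^+_{\lambda_{r,s}}\to R^+_{\lambda_{r,s},\Delta_{r,s}}\to D^-_{\lambda_{r,s}+2}\to 0$. I then take $\slindrel{r,s}^+$ to be the almost-simple quotient of the affine module induced from $R^+_{\lambda_{r,s},\Delta_{r,s}}$, i.e. the degenerate relaxed module $\slrel{\lambda_{r,s}}{r,s}^{+,k}$. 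By the definition of the almost-simple quotient its lowest conformal-weight space is exactly $R^+_{\lambda_{r,s},\Delta_{r,s}}$, and applying only negative modes to the $\sltwo$-submodule $D^+_{\lambda_{r,s}}$ produces an affine submodule whose top level is precisely $D^+_{\lambda_{r,s}}$ (positive modes lower conformal weight below the top and hence act by zero there). I then identify this submodule with $\sldis{r,s}^+$ and the resulting quotient with $\sldis{u-r,v-s}^-$ by matching the $L_0$- and $h_0$-eigenvalues against the list of Theorem~\ref{rhwsimples}. Verifying that these sub- and quotient modules are genuinely the simple objects of the classification, rather than larger or smaller modules, is the main obstacle: it amounts to controlling simplicity of affine highest/lowest-weight modules at an admissible level, and this is exactly what the free-field realisation of \cite{Ad1} and the relaxed-module analysis of \cite{Kawrel19} provide. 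In the Fock-module picture of \cite{Ad1} the three modules appear as the graded pieces of a single filtered free-field module, which makes both simplicity and the identification manifest.

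Granting this identification, non-splitting follows cheaply from the top space. If the displayed sequence split as $L_k(\sltwo)$-modules, restricting a splitting $\sldis{u-r,v-s}^-\to\slindrel{r,s}^+$ to the lowest conformal-weight space would yield an $\sltwo$-module splitting of $R^+_{\lambda_{r,s},\Delta_{r,s}}=D^+_{\lambda_{r,s}}\oplus D^-_{\lambda_{r,s}+2}$, contradicting the non-splitness recalled above; since the three top spaces all sit in the same $L_0$-eigenspace, no conformal-weight shift can rescue a splitting. Alternatively, if one prefers to avoid pinning down the exact subquotients by hand, the non-split class can be detected by exhibiting a non-zero screening/intertwining operator between the relevant modules and invoking Proposition~\ref{prop:intertwiner} to propagate its non-vanishing from a single pair of generating vectors to the whole module. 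Once the base case $\ell=0$ is settled for both dense degenerations $R^\pm$, the full family \eqref{es:DED} for all $\ell\in\ZZ$ follows formally by spectral flow.
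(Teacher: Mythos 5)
The paper offers no proof of Proposition \ref{existelle} at all: it is quoted from \cite{Ad1, Kawrel19}, so your proposal can only be compared against that citation. Your outline is the standard construction and its formal parts are correct: spectral flow and conjugation do reduce everything to a single sequence at $\ell=0$; the submodule of $\slindrel{r,s}^+$ generated by $D^+_{\lambda_{r,s}}$ is automatically the simple module $\sldis{r,s}^+$, since $D^+_{\lambda_{r,s}}$ is a simple $\sltwo$-module and hence any nonzero proper submodule would meet the top level of $\slindrel{r,s}^+$ trivially, contradicting almost-simplicity; and because $\Delta_{u-r,v-s}=\Delta_{r,s}$, restricting a hypothetical splitting to the lowest conformal-weight space really does contradict the non-splitness of $R^+_{\lambda_{r,s},\Delta_{r,s}}$, so your non-splitting argument is sound. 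The one genuinely non-formal step is the one you flag, but note that it is not accurately described as controlling \emph{simplicity of affine highest/lowest-weight modules}: it is that the quotient $\slindrel{r,s}^+/\sldis{r,s}^+$ is itself simple, equivalently that $\slindrel{r,s}^+$ has length exactly two. Matching $(h_0,L_0)$-eigenvalues against Theorem \ref{rhwsimples} only identifies which simple module this quotient surjects onto, and almost-simplicity of $\slindrel{r,s}^+$ cannot exclude a submodule lying strictly between $\sldis{r,s}^+$ and $\slindrel{r,s}^+$, because such a submodule meets the top level in $D^+_{\lambda_{r,s}}\neq 0$, so the defining property of the almost-simple quotient says nothing about it. That length-two statement is precisely the content supplied by the cited references (the character computations of \cite{Kawrel19}, or Adamovi\'c's realization of the degenerate relaxed modules as $\Pi_m(\lambda)\otimes\vir_{r,s}$, cf.\ Proposition \ref{prop:ada}), so, exactly like the paper, you are ultimately importing the decisive input rather than proving it; with that caveat understood, your write-up is a correct and faithful expansion of the paper's bare citation.
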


The complete classification of indecomposable modules is achieved in \cite{ACK}, which we will now recall. There exist indecomposable projective and injective modules $\sfmod{\ell}{\slproj{r, s}}$ satisfying
\begin{equation}\label{ses_proj}
\begin{split}
&\dses {\sfmod{\ell}{\slindrel{r,s}^+}} {\sfmod{\ell}{\slproj{r, s}}}    {\sfmod{\ell+1}{\slindrel{r,s+1}^+}},\\
&\dses {\sfmod{\ell}{\slindrel{r,v-1}^+}} {\sfmod{\ell}{\slproj{r, v-1}}}    {\sfmod{\ell+2}{\slindrel{u-r,1}^+}},\\
&\dses {\sfmod{\ell}{\slindrel{r,s}^-}} {\sfmod{\ell}{\slproj{r, s}}}    {\sfmod{\ell-1}{\slindrel{r,s+1}^-}} ,\\
&\dses {\sfmod{\ell}{\slindrel{r,v-1}^-}} {\sfmod{\ell}{\slproj{r, v-1}}}    {\sfmod{\ell-2}{\slindrel{u-r,1}^-}} ,
\end{split}
\end{equation}
for $\ell \in\ZZ, r = 1, \dots,  u-1, s = 1, \dots, v-2$.

Each simple $\sigma^\ell(\slrel{\lambda}{r,s})$  belongs to a block $E^\ell_{r, s, \lambda}$ whose objects are just direct sums of several copies of the $\sigma^\ell(\slrel{\lambda}{r,s})$.
Set
$$
L^{r,n}_{-\ell(v-1)-m}:=\sigma^{-n-2\ell v-m}(\mc{D}^+_{\phi_{r,\ell},v-1-m}),\qquad  P^{r,n}_{-\ell(v-1)-m}:=\sigma^{-n-2\ell v-m}(\mc{P}_{\phi_{r,\ell},v-1-m})
$$ 
for $
\ell\in\ZZ, 0\leq m\leq v-2,$
with $\phi_{r,\ell}= r $ if $\ell$ is even and $\phi_{r,\ell}= u-r $ if $\ell$ is odd. 
Then there is a block $C_{r, s}$
of  $\sWtsl{k}$ 
whose simple objects are the 
$L^{r,n}_m$ $(m\in\ZZ)$ and all non-zero $\Ext^1$
among simples are of the form
$$
\Ext^1(L^{r,n}_m,L^{r,n}_{m\pm1})=\C \quad (m\in\ZZ).
$$
The projective  cover and injective hull of $L_{m}^{r, n}$ is denoted by $P_m^{r, n}$ and its Loewy diagram is
\begin{center}
\begin{tikzpicture}[scale=1]
\node (top) at (0,2) [] {$L_m^{r, n}$};
\node (left) at (-2,0) [] {$L_{m-1}^{r, n}$};
\node (right) at (2,0) [] {$L_{m+1}^{r, n}$};
\node (bottom) at (0,-2) [] {$L_m^{r, n}$};
\draw[->, thick] (top) -- (left);
\draw[->, thick] (top) -- (right);
\draw[->, thick] (left) -- (bottom);
\draw[->, thick] (right) -- (bottom);
\node (label) at (0,0) [circle, inner sep=2pt, color=white, fill=black!50!] {$P_m^{r, n}$};
\end{tikzpicture}
\end{center}

\begin{theorem}\label{classification} \textup{\cite{ACK}}
The complete list of the blocks of  $\sWtsl{k}$ 
are
\begin{align*}
&C_{r,n}&&(1\leq r\leq u-1,0\leq n\leq v-1),\\
&
E^n_{r,s,\lambda}&&(n\in\ZZ ,1\leq r\leq u-1,1\leq s\leq v-1,\lambda\in\C/2\ZZ
\mbox{ with }
\lambda\neq \lambda_{r,s},\lambda_{u-r,v-s}\mbox{ mod 2}).
\end{align*}
\end{theorem}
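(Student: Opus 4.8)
The plan is to identify the blocks with the connected components of the linkage graph on simple objects: its vertices are the simples classified in Theorem~\ref{rhwsimples}, and I draw an edge between two simples precisely when $\Ext^1$ between them is non-zero. In a finite-length abelian category the block decomposition coincides with the partition of indecomposables generated by this relation, so the statement reduces to (a) listing the simples, which is done, and (b) computing all $\Ext^1$ groups between them. Since every simple here has a projective cover (the dense simples are their own covers, the discrete ones have the covers $\slproj{r,s}$), I would compute these extensions through the standard identity $\dim\Ext^1(S,T)=[\mathrm{rad}(P_S)/\mathrm{rad}^2(P_S):T]$, reading the right-hand side off the Loewy data already recorded.

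First I would dispose of the dense simples $\sigma^\ell(\slrel{\lambda}{r,s})$. By the remark following Theorem~\ref{rhwsimples} these are simple-projective, and since $\sWtsl{k}$ is closed under contragredients they are also injective; hence $\Ext^1(\sigma^\ell(\slrel{\lambda}{r,s}),-)=0=\Ext^1(-,\sigma^\ell(\slrel{\lambda}{r,s}))$. Each such simple is therefore isolated in the linkage graph and spans a semisimple block on its own. The identifications $\sigma^\ell(\slrel{\lambda}{r,s})=\sigma^\ell(\slrel{\lambda}{u-r,v-s})$ and $\sigma^\ell(\slrel{\lambda}{r,s})=\sigma^\ell(\slrel{\mu}{r,s})$ for $\lambda\equiv\mu\pmod{\rlat}$ from Theorem~\ref{rhwsimples} show that these blocks are exactly the $E^n_{r,s,\lambda}$ with $1\le r\le u-1$, $1\le s\le v-1$ and $\lambda\not\equiv\lambda_{r,s},\lambda_{u-r,v-s}\pmod 2$, subject to $E^n_{r,s,\lambda}=E^n_{u-r,v-s,\lambda}$ and $\lambda$ read modulo $\rlat$. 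Note that every spectral-flow twist $\sigma^n$ produces a new isolated simple, which is why the index $n$ here runs over all of $\ZZ$.

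Next I would treat the remaining simples, the integrable $\sigma^\ell(\slirr{r})$ and the discrete $\sigma^\ell(\sldis{r,s}^\pm)$. The indecomposables of Proposition~\ref{existelle} together with the projective covers \eqref{ses_proj} provide the non-split extensions linking neighbouring discrete simples; re-indexing as $L^{r,n}_m$ converts these into bi-infinite $A_\infty$-chains whose only adjacencies are $\Ext^1(L^{r,n}_m,L^{r,n}_{m\pm1})=\C$, exactly as displayed in the Loewy diagram of $P^{r,n}_m$. The identifications $\sigma^\ell(\slirr{r})=\sigma^{\ell\mp1}(\sldis{u-r,v-1}^\pm)$ show that the integrable simples are just the $s=v-1$ members of these chains, so they are absorbed and contribute no block of their own. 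Crucially, spectral flow shifts $m$ along a chain rather than linking distinct chains, so for each $r$ only finitely many chains remain distinct; indexing them by a residue $n$ with $0\le n\le v-1$ produces exactly the blocks $C_{r,n}$, each a bi-infinite $A_\infty$ component.

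The real content, and the main obstacle, lies upstream of this bookkeeping: it is the construction and precise Loewy structure of the projective covers in \eqref{ses_proj}, which rest on the free-field realizations behind Proposition~\ref{existelle}. Granting those, the vanishing half of the $\Ext^1$ computation is automatic, since the diagram of $P^{r,n}_m$ gives $\mathrm{rad}(P^{r,n}_m)/\mathrm{rad}^2(P^{r,n}_m)=L^{r,n}_{m-1}\oplus L^{r,n}_{m+1}$, whence $\Ext^1(L^{r,n}_m,T)=\C$ precisely for $T=L^{r,n}_{m\pm1}$ and $0$ otherwise; combined with the projectivity--injectivity of the dense simples this pins down the full linkage graph. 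Its connected components are then exactly the chains $C_{r,n}$ and the singletons $E^n_{r,s,\lambda}$. I would finish by checking against the exhaustive and non-redundant list of Theorem~\ref{rhwsimples} that every indecomposable lies in one of these blocks and that the two families are disjoint, which yields the complete list.
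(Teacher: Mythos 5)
First, a structural remark: the paper offers no proof of Theorem \ref{classification} to compare against --- it is quoted verbatim from \cite{ACK}, as is every ingredient you invoke: the list of simples (Theorem \ref{rhwsimples}), the projectivity of the $\sigma^\ell(\slrel{\lambda}{r,s})$, the modules of Proposition \ref{existelle} and \eqref{ses_proj}, the Loewy diagram of $P^{r,n}_m$, and even the displayed statement that all non-zero $\Ext^1$ between simples are $\Ext^1(L^{r,n}_m,L^{r,n}_{m\pm1})=\C$. Your proposal is therefore a reduction of the theorem to these quoted facts, which is a legitimate thing to attempt; within that scope its skeleton is sound (blocks are linkage classes; the dense simples are isolated because they are projective and, by contragredient duality, injective --- that last step is a correct argument you supply yourself). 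Be aware, though, that both ``blocks are the connected components of the $\Ext^1$-quiver'' and the identity $\dim\Ext^1(S,T)=[\mathrm{rad}(P_S)/\mathrm{rad}^2(P_S):T]$ presuppose that $\sWtsl{k}$ is a length category with enough projectives, which is again a theorem of \cite{ACK} that you assume without comment.

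The genuine gap is in the chain bookkeeping, which you declare ``automatic'' and then defer. The claim that ``spectral flow shifts $m$ along a chain rather than linking distinct chains'' is false: $\sigma$ is an autoequivalence, so it permutes the linkage classes, and it does not preserve them. Composing \eqref{ses_proj} with the identifications of Theorem \ref{rhwsimples} shows (for $v\ge 3$) that the two neighbours of $\sigma^{j}(\mc{D}^+_{r,v-1})$ are $\sigma^{j-1}(\mc{D}^+_{r,v-2})$ and $\sigma^{j+2}(\mc{D}^+_{u-r,1})$; hence moving $v-1$ steps along a chain shifts the spectral-flow index by $v$ and exchanges $r \leftrightarrow u-r$, so that $\sigma^{-1}$ sends the chain through $\sigma^{-n}(\mc{D}^+_{r,v-1})$ to the \emph{distinct} chain through $\sigma^{-n-1}(\mc{D}^+_{r,v-1})$, and the chains form $\sigma$-orbits of size $2v$ (size $v$ when $u=2r$). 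It is this computation, not invariance under spectral flow, that yields finitely many chains per $r$, and carrying it out is exactly the deferred check that the list $C_{r,n}$, $0\le n\le v-1$, is exhaustive and irredundant. Nor is that check a formality: it identifies the chain through $\sigma^{-n}(\mc{D}^+_{r,v-1})$ as the set of $\sigma^{-n-\ell v-m}(\mc{D}^+_{\phi_{r,\ell},v-1-m})$ with $\ell\in\ZZ$, $0\le m\le v-2$, which is incompatible with the exponent $-n-2\ell v-m$ displayed in the paper (the latter reaches only half of the discrete simples), so one of the displayed formulas must be corrected before the list can even be verified. In short: the one step you add beyond the citations is misstated, and the step that would actually constitute a proof of the stated list is left undone.
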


We will later study the following subcategory.
\begin{definition}
Let $\sWtslA{k}$  be the full subcategory of $\sWtsl{k}$  whose blocks are 
\begin{align*}
&C_{1,n}&&( 0\leq n\leq v-1),\\
&
E^n_{1,s,\lambda}&&(n\in\ZZ,1\leq s\leq v-1,\lambda\in\C/2\ZZ
\mbox{ with }
\lambda\neq \lambda_{1,s},\lambda_{u-1,v-s}\mbox{ mod 2}).
\end{align*}
\end{definition}

\subsection{Branching Rules}

Let $k = -2 + \frac{u}{v}$ be an admissible level as before. 
Recall, that the Virasoro algebra has the coset realization as  a diagonal coset \cite{GKO, ACL}, in particular
\[
\slirr{1}^k \otimes \slirr{1}^1  \cong \bigoplus_{\substack{ m=1 \\ m \ \text{odd} }}^{u+v-1} \slirr{m}^{k+1} \otimes \vir_{m, 1}^{k'}, \qquad k' = \frac{k+3}{k+2} -2. 
\]
The Urod-Theorem, that is the statement that the twisted  quantum Hamiltonian reduction functor of \cite{AF} commutes with tensoring with integrable representations 
\cite[Thm.8.7]{ACF} gives 
\begin{equation}
\vir_{r, s}^k \otimes \slirr{a}^1\cong \bigoplus_{\substack{m =  1 \\  m +r + s + a \ \text{even}  }}^{u+v-1} \vir^{k+1}_{m, s} \otimes \vir_{m, r}^{\frac{k}{k+1}}
 \end{equation}
 for $r =1, \dots, u-1, s=1, \dots, v-1, a =1, 2$. 
 Note that the conformal  vector on the lattice vertex operator algebra is unusual, so that this statement extends to the character identity if we use the usual conformal vectors. 
\begin{equation}
\text{ch}[ \vir_{r, s}^k](q)  \ \text{ch}[\slirr{a}^1](1, q)  =  \sum_{\substack{m =  1 \\  m +r + s + a \ \text{odd}  }}^{u+v-1} \text{ch}[ \vir^{k+1}_{m, s}](q)  \ \text{ch}[\vir_{m, r}^{\frac{k}{k+1}}](q)
  \end{equation}
This gives us branching rules in general. 
Firstly, by \cite[Thm.7.2]{Ad1} we have 
\begin{equation}
\text{ch}[\slrel{\lambda}{r,s}^k ](z, q) = z^{-k +2\lambda}\text{ch}[ \vir_{r, s}^k](q)  \frac{\delta(z^2)}{\eta(q)^2}
\end{equation}
Using that 
\[
 \text{ch}[ \slirr{a}^1] (z, q) \ \delta(z^2) =  z^a \text{ch}[ \slirr{a}^1] (1, q) \ \delta(z^2) 
\]
we get 
\begin{equation}
\begin{split}
\text{ch}[\slrel{\lambda}{r,s}^k](z, q)  \ \text{ch}[\slirr{a}^1](z, q)  &= 
z^{-k +2\lambda} \text{ch}[ \vir_{r, s}^k](q)  \frac{\delta(z^2)}{\eta(q)^2}  \ \text{ch}[\slirr{a}^1](z, q) \\
&= z^{-k +2\lambda +a} \text{ch}[ \vir_{r, s}^k](q) \frac{\delta(z^2)}{\eta(q)^2}  \ \text{ch}[\slirr{a}^1](1, q) \\
&= z^{-k-1 +2\lambda +a+1}  \frac{\delta(z^2)}{\eta(q)^2}   \sum_{\substack{m =  1 \\  m +r + s + a \ \text{odd}  }}^{u+v-1} \text{ch}[ \vir^{k+1}_{m, s}](q)  \ \text{ch}[\vir_{m, r}^{k'}](q) \\
&=  \sum_{\substack{m =  1 \\  m +r + s + a \ \text{odd}  }}^{u+v-1} \text{ch}[\slrel{\lambda + \frac{a+1}{2}}{m, s}^{k+1}](z, q)   \ \text{ch}[\vir_{m, r}^{k'}](q) .
\end{split}
\end{equation}
Spectral flow changes the character as \cite[(4.13)]{CR1}
\[
\ch[\sigma^\ell(M)](z, q) = z^{k \ell } q^{\frac{k \ell^2}{4}} \ch[M](z q^{\frac{\ell}{2}}, q)
\]
and so (here for an integer $n$, we set $\underline{n}$ to be one if $n$ is odd and to be two if $n$ is even) 
\begin{equation}
\begin{split}
\ch[\sigma^\ell(\slrel{\lambda}{r,s}^k)](z, q)]  &\ch[\slirr{\underline{a+\ell}}^1](z, q) = \ch[\sigma^\ell(\slrel{\lambda}{r,s}^k](z, q)]  \ch[\sigma^\ell(\slirr{a}^1)] (z, q)\\
&=  \sum_{\substack{m =  1 \\  m +r + s + a \ \text{odd}  }}^{u+v-1} \text{ch}[\sigma^\ell(\slrel{\lambda + \frac{a+1}{2}}{m, s}^{k+1})](z, q)   \ \text{ch}[\vir_{m, r}^{k'}](q) .
\end{split}
\end{equation}
An immediate consequence is
\begin{corollary}\label{cor:branching}
The following branching rules hold
\begin{align}
\sigma^\ell(\slrel{\lambda}{r,s}^k) \otimes \slirr{\underline{a+\ell}}^1 &=  \label{l1}
 \bigoplus_{\substack{m =  1 \\  m +r + s + a \ \text{odd}  }}^{u+v-1} \sigma^\ell(\slrel{\lambda + \frac{a+1}{2}}{m, s}^{k+1}) \otimes \vir_{m, r}^{k'} \\  \label{l2}
 \sfmod{\ell}{\slindrel{r,s}^{\pm, k}}\otimes \slirr{\underline{a+\ell}}^1 &= 
 \bigoplus_{\substack{m =  1 \\  m +r + s + a \ \text{odd}  }}^{u+v-1} \sfmod{\ell}{\slindrel{m, s}^{\pm, k+1}} \otimes \vir_{m, r}^{k'}\\  \label{l3}
 \sfmod{\ell}{\sldis{r,s}^{\pm, k}}\otimes \slirr{\underline{a+\ell}}^1 &= 
 \bigoplus_{\substack{m =  1 \\  m +r + s + a \ \text{odd}  }}^{u+v-1} \sfmod{\ell}{\sldis{m, s}^{\pm, k+1}} \otimes \vir_{m, r}^{k'} \\  \label{l4}
  \sfmod{\ell}{\slproj{r,s}^k}\otimes \slirr{\underline{a+\ell}}^1 &= 
 \bigoplus_{\substack{m =  1 \\  m +r + s + a \ \text{odd}  }}^{u+v-1} \sfmod{\ell}{\slproj{m, s}^{k+1}} \otimes \vir_{m, r}^{k'}.
\end{align}
for $\lambda \neq \lambda_{r, s}, \lambda_{u-r, v-s}$
\end{corollary}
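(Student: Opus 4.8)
The plan is to upgrade the character identity preceding the statement to an isomorphism of modules, handling the four families in order of increasing complexity and bootstrapping from the simple modules to the non-semisimple ones by exactness. Throughout I work with the conformal embedding $L_{k+1}(\sltwo)\otimes\vir^{k'}\ira L_k(\sltwo)\otimes\slirr{1}^1$ underlying the Urod/GKO decomposition. Since $t'=k'+2=\frac{u+v}{u}$ is a minimal-model level (with $\gcd(u+v,u)=1$ and both entries $\ge 2$), $\vir^{k'}$ is rational, so for any object $M$ of $\sWtsl{k}$ the restriction of $M\otimes\slirr{a}^1$ to the coset carries a semisimple $\vir^{k'}$-action. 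This yields a canonical decomposition
\[
M\otimes\slirr{a}^1 \;\cong\; \bigoplus_{m} \cG_m(M)\otimes \vir_{m,r}^{k'},
\qquad
\cG_m(M):=\Hom_{\vir^{k'}}\!\bigl(\vir_{m,r}^{k'},\,M\otimes\slirr{a}^1\bigr),
\]
in which each multiplicity space $\cG_m(M)$ is naturally an $L_{k+1}(\sltwo)$-module and $M\mapsto\cG_m(M)$ is exact (external tensoring, restriction, and isotypic projection are all exact). Since spectral flow is an autoequivalence intertwining this decomposition (acting as $\sigma^\ell$ on the $L_{k+1}(\sltwo)$-factor and trivially on the $\vir^{k'}$-factor), and $\slirr{\underline{a+\ell}}^1\cong\sfmod{\ell}{\slirr{a}^1}$, it suffices to treat $\ell=0$ and then apply $\sigma^\ell$.

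First I would establish \eqref{l1}. Taking $M=\slrel{\lambda}{r,s}^k$ with $\lambda\neq\lambda_{r,s},\lambda_{u-r,v-s}\ (\mathrm{mod}\ \rlat)$, the displayed character identity shows that $\ch[\cG_m(M)]$ equals $\ch[\slrel{\lambda+\frac{a+1}{2}}{m,s}^{k+1}]$ once the linearly independent characters $\ch[\vir_{m,r}^{k'}]$ are matched. For such generic $\lambda$ these multiplicity spaces lie in the semisimple blocks $E^0_{m,s,\lambda+\frac{a+1}{2}}$ of Theorem \ref{classification}, where an object is determined up to isomorphism by its character; hence $\cG_m(M)\cong\slrel{\lambda+\frac{a+1}{2}}{m,s}^{k+1}$, which is \eqref{l1}.

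Next, for \eqref{l3} I observe that $\cG_m$ is moreover \emph{fully faithful}: because $\slirr{a}^1$ is a simple current of $L_1(\sltwo)$ (so external tensoring is invertible) and $\vir^{k'}$ is rational, reassembling the coset decomposition recovers $M$, so $\cG_m$ is an equivalence onto its image. Consequently $\cG_m$ sends simples to simples, and matching characters identifies $\cG_m(\sldis{r,s}^{\pm,k})$ with the simple discrete module $\sldis{m,s}^{\pm,k+1}$, giving \eqref{l3}. Then \eqref{l2} and \eqref{l4} follow by applying the exact functor $\cG_m$ to the defining non-split sequences \eqref{es:DED} and \eqref{ses_proj}: for each admissible $m$ exactness produces a short exact sequence of $L_{k+1}(\sltwo)$-modules whose outer terms are, by \eqref{l3} and the Virasoro identification $\vir_{m,r}^{k'}\cong\vir_{(u+v)-m,u-r}^{k'}$, exactly the composition factors of $\sfmod{0}{\slindrel{m,s}^{\pm,k+1}}$ and $\sfmod{0}{\slproj{m,s}^{k+1}}$.

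The main obstacle is precisely to show that these extensions do not split, i.e. that $\cG_m$ carries the indecomposables $\slindrel{r,s}^{\pm,k}$ and $\slproj{r,s}^k$ to indecomposable modules rather than to their semisimplifications. This is where full faithfulness is essential: it induces an isomorphism on $\Ext^1$, and since the relevant $\Ext^1$ spaces between adjacent simples in the blocks $C_{r,s}$ are one-dimensional (Theorem \ref{classification}), the non-split class of \eqref{es:DED} (established in Proposition \ref{existelle}) must map to the non-split class, and similarly for \eqref{ses_proj}. This pins down $\cG_m(\slindrel{r,s}^{\pm,k})\cong\slindrel{m,s}^{\pm,k+1}$ and $\cG_m(\slproj{r,s}^k)\cong\slproj{m,s}^{k+1}$; reassembling the coset decomposition and applying $\sigma^\ell$ then yields \eqref{l2} and \eqref{l4}. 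The technical point deserving care is verifying full faithfulness (equivalently invertibility of the simple-current tensoring together with the rationality of the coset Virasoro), which is the only non-formal ingredient beyond the character bookkeeping already carried out.
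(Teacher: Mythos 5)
Your setup (the multiplicity-space decomposition over $L_{k+1}(\sltwo)\otimes\vir^{k'}$, exactness of $\cG_m$) and your treatment of \eqref{l1} agree in substance with the paper, which also gets \eqref{l1} from the character identity plus the fact that simple modules in these categories are determined by their characters. The gap is your claim that $\cG_m$ is \emph{fully faithful}. The justification you give --- simple-current tensoring is invertible, $\vir^{k'}$ is rational, so ``reassembling the coset decomposition recovers $M$'' --- is not an argument: $\cG_m$ is restriction to the coset subalgebra followed by projection onto a single $\vir^{k'}$-isotypic component, and this forgets exactly the gluing data, namely the action of those modes of $L_k(\sltwo)\otimes\slirr{1}^1$ that move between the components $\cG_{m'}(M)\otimes\vir^{k'}_{m',r}$. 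From a single multiplicity space $M$ cannot be reassembled; and while restriction-type functors are faithful, they are not full in general (a map $\cG_m(M)\to\cG_m(N)$ extended by zero on the other components is a morphism over $L_{k+1}(\sltwo)\otimes\vir^{k'}$ with no reason to commute with the full action). Worse, fullness of $\cG_m$ on the relevant objects essentially \emph{is} the hard content of \eqref{l2}--\eqref{l4}, since it instantly forces indecomposables to go to indecomposables; assuming it is close to circular. The genuine full-faithfulness statement in the paper, Proposition \ref{prop:Fr}, concerns the induction functor $\mathcal F_r$ going in the \emph{opposite} direction, and its proof uses Frobenius reciprocity in $\textup{Rep}(A)$ together with the vertex tensor category structure of Theorem \ref{thm:main} and the fusion rules of Theorem \ref{thm:fusion}; these come later in the paper and (in the case of Theorem \ref{thm:fusion} and Proposition \ref{prop:Fr}) themselves rely on Corollary \ref{cor:branching}, so you cannot import them here. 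Your downstream logic (an exact fully faithful functor sends non-split extensions to non-split extensions, and one-dimensionality of $\Ext^1$ then pins down the middle term) is fine, but it all hangs on the unproved premise.

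There is a second, smaller gap: for \eqref{l3} you propose to ``match characters,'' but the displayed character identity only covers the product-form characters of relaxed-type modules; the characters of $\sfmod{\ell}{\sldis{r,s}^{\pm,k}}\otimes \slirr{\underline{a+\ell}}^1$ are computed nowhere in the setup, so there is nothing to match against. The paper avoids both problems by elementary means: it first proves \eqref{l2}, observing that $\sfmod{\ell}{\slindrel{r,s}^{+}}$, $\sfmod{\ell}{\slindrel{r,s}^{-}}$ and $\sfmod{\ell}{\sldis{r,s}^{+}}\oplus\sfmod{\ell}{\sldis{u-r,v-s}^{-}}$ share one character but are distinguished by the behaviour of the zero modes $\sigma^\ell(e_0)$, $\sigma^\ell(f_0)$, a property that passes through tensoring with the integrable module $\slirr{\underline{a+\ell}}^1$ because there the zero modes act locally nilpotently; \eqref{l3} then follows by restricting \eqref{l2} to the submodule singled out by that zero-mode behaviour; and \eqref{l4} follows from \eqref{l2}, the exact sequences \eqref{ses_proj}, and the observation that among modules with that extension structure only $\sfmod{\ell}{\slproj{m,s}^{k+1}}$ carries a non-zero nilpotent endomorphism, which $\sfmod{\ell}{\slproj{r,s}^k}$ supplies by restriction. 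To salvage your scheme you would have to replace the full-faithfulness claim by intrinsic invariants of this kind that are visibly preserved by $\cG_m$.
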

\begin{proof}
The first equation \eqref{l1} follows from the character identity as simple modules in $\sWtsl{k}$ have unique character. 
For the second case \eqref{l2} we notice that $\sfmod{\ell}{\slindrel{r,s}^+}, \sfmod{\ell}{\slindrel{r,s}^-}$ and $\sfmod{\ell}{\sldis{r,s}^+} \oplus \sfmod{\ell}{\sldis{u-r,v-s}^-}$
have all the same character. But on $\sfmod{\ell}{\slindrel{r,s}^+}$  $\sigma^\ell(e_0)$ acts locally nilpotently while  $\sigma^\ell(f_0)$ doesnot. While for $\sfmod{\ell}{\slindrel{r,s}^-}$  $\sigma^\ell(f_0)$ acts locally nilpotently while  $\sigma^\ell(e_0)$ doesnot and on $\sfmod{\ell}{\sldis{r,s}^+} \oplus \sfmod{\ell}{\sldis{u-r,v-s}^-}$ none of them acts locally nilpotently. 
Since $\slirr{\underline{a+\ell}}^1 $ is integrable all $e_n, f_n$ act locally nilpotently and hence the action on the tensor product is locally nilpotent if and only if it is so on the first factor. 
The third equation \eqref{l3} is the restriction of the second one to the submodule on which $\sigma^\ell(e_0)$ acts locally nilpotently in the case of $ \sfmod{\ell}{\sldis{r,s}^+}$ and  $\sigma^\ell(f_0)$ acts locally nilpotently in the case of $ \sfmod{\ell}{\sldis{r,s}^-}$.

We turn to the last equation \eqref{l4}. 
Firstly Let $W\subset V$ be vertex operator algebras. An exact sequence of $V$-modules is in particular an exact sequence of $W$-modules and so
the decomposition \eqref{l2} together with \eqref{ses_proj} implies that
\[
\sfmod{\ell}{\slproj{r,s}^k}\otimes \slirr{\underline{a+\ell}}^1 = 
 \bigoplus_{\substack{m =  1 \\  m +r + s + a \ \text{odd}  }}^{u+v-1}   X_{\ell, m, s}^{k+1} \otimes \vir_{m, r}^{k'}
\]
for certain modules $X_{\ell, m, s}^{k+1}$ satisfying
\begin{equation}\label{ses_proj2}
\begin{split}
&\dses {\sfmod{\ell}{\slindrel{m,s}^{+,k+1}}} {X_{\ell, m, s}^{k+1}}    {\sfmod{\ell+1}{\slindrel{m,s+1}^{+,k+1}}},\\
&\dses {\sfmod{\ell}{\slindrel{m,v-1}^{+,k+1}}} {X_{\ell, m, v-1}^{k+1}}    {\sfmod{\ell+2}{\slindrel{u-m,1}^{+,k+1}}},\\
\end{split}
\end{equation}
for $\ell \in\ZZ, m = 1, \dots,  u+v-1, s = 1, \dots, v-2$. 	

The decomposition \eqref{l4} follows, since $ \sfmod{\ell}{\slproj{r,s}}$ has a nilpotent endomorphism $x$ mapping its top onto its socle $\sfmod{\ell}{\sldis{r,s}}$. Thus by \eqref{l3} the restriction of $x$ to $X_{\ell, m, s}^{k+1} \otimes \vir_{m, r}^{k'}$ is non-zero, but the only module that satisfies \eqref{ses_proj2} and that has a non-zero nilpotent endomorphism is $\sfmod{\ell}{\slproj{m, s}^{k+1}}$.
\end{proof}
 Fix $\ell \in \mathbb Z$ and $ s \in \{ 1, \dots, v-1 \}$. Let $P_r^k$ be either $P_r^k = \sfmod{\ell}{\slproj{r,s}^k}$ or $P_r^k = \sigma^\ell(\slrel{r-1+\lambda}{r,s}^{k}) $ also let $a =0$ if $r+s$ is even and $a=1$ if $r+s$ is odd and choose $\lambda$  such that $P_r^k$ is projective. Then we can compactly write the corollary for these modules as
 \[
  P^k_r \otimes \slirr{\underline{a+\ell}}^1 = 
 \bigoplus_{\substack{m =  1 \\  m \ \text{odd}  }}^{u+v-1}  P^{k+1}_m  \otimes \vir_{m, r}^{k'}
\]
Denote by $X_r^k$ the simple quotient of $P_r^k$, then 
\[
  X^k_r \otimes \slirr{\underline{a+\ell}}^1 = 
 \bigoplus_{\substack{m =  1 \\  m \ \text{odd}  }}^{u+v-1}  X^{k+1}_m  \otimes \vir_{m, r}^{k'}.
\]
The $P^{k+1}_m$ and $X^{k+1}_m$ can then be read off from the previous corollary. They are independent of $r$.
\begin{corollary}
Let $M^k$ be an indecomposable module and
 $\mathcal Y$ be a surjective intertwining operator of type ${M ^k\choose \slirr{r}^k \, P^k_1}$. Then any simple quotient of $M^k$ is isomorphic to $X_r^k$. 
\end{corollary}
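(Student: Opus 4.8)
The plan is to prove the statement one simple quotient at a time and to transport the intertwining operator through the diagonal coset, where the Virasoro factor is rational and its fusion rules rigidify the answer.

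First I would fix a simple quotient $S$ of $M^k$ with quotient map $p\colon M^k\to S$ and set $\bar{\mathcal Y}=p\circ\mathcal Y$, a nonzero intertwining operator of type $\binom{S}{\slirr{r}^k\ P^k_1}$. By Proposition \ref{prop:intertwiner} its nonvanishing is already witnessed on the generating vectors of $\slirr{r}^k$ and $P^k_1$, so it suffices to follow a single nonzero matrix coefficient. A simple weight module is determined by its decomposition under the coset $L_{k+1}(\sltwo)\otimes\vir^{k'}\subset L_k(\sltwo)\otimes L_1(\sltwo)$, since that decomposition refines the character, which by the proof of Corollary \ref{cor:branching} already separates the simple objects of $\sWtsl{k}$. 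Thus the goal becomes to show that $S$ and $X^k_r$ have the same branching, namely $S\otimes\slirr{c}^1\cong\bigoplus_{m}X^{k+1}_m\otimes\vir^{k'}_{m,r}$.

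Next I would tensor $\bar{\mathcal Y}$ with the module vertex operator of a level-one integrable module $\slirr{a}^1$ to obtain an intertwining operator for $L_k(\sltwo)\otimes L_1(\sltwo)$, restrict it along the coset inclusion, and decompose every module appearing by the branching rules: Corollary \ref{cor:branching} for $P^k_1$, its integrable analogue $\slirr{r}^k\otimes\slirr{a}^1\cong\bigoplus_m\slirr{m}^{k+1}\otimes\vir^{k'}_{m,r}$ from the same diagonal coset, and the corresponding decomposition of $S$. Since an intertwining operator for a tensor product vertex algebra factors as a sum of products of intertwining operators for the two factors, this exhibits $\bar{\mathcal Y}$ as assembled from affine intertwining operators at level $k+1$ and Virasoro intertwining operators, and the chosen nonzero matrix coefficient survives in at least one summand. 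The surviving Virasoro component is a nonzero intertwining operator of type $\binom{\vir^{k'}_{m'',s''}}{\vir^{k'}_{m_1,r}\ \vir^{k'}_{m_2,1}}$; by the Virasoro fusion rules \eqref{eq:fus-vir} the second labels fuse as $N^{u\ \ s''}_{r,1}=\delta_{s'',r}$, forcing the second Virasoro label of $S$ to equal $r$. The companion affine component is an intertwining operator out of the integrable module $\slirr{m_1}^{k+1}$ into the $L_{k+1}(\sltwo)$-content of $S$, which matches the $X^{k+1}_m$ occurring in the branching of $X^k_r$. Together these identify the branching of $S$ with that of $X^k_r$, whence $S\cong X^k_r$; as $S$ was an arbitrary simple quotient, the claim follows.

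The hard part will be this transfer step. At this point in the paper the Huang--Lepowsky--Zhang tensor structure at level $k$ is not yet available, so the factorization of $\bar{\mathcal Y}$ into affine and Virasoro components, and especially the persistence of a nonzero matrix coefficient through the coset decomposition, must be argued directly from the Jacobi identity and the convergence of the relevant correlation functions rather than from an abstract fusion product. Here Proposition \ref{prop:intertwiner} is the essential device, since it reduces the bookkeeping to a single pair of generating vectors; once a nonzero Virasoro component has been produced, the rationality of $\vir^{k'}$ makes the identification of the second label, and hence of $S$, completely rigid.
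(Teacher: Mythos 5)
Your overall route is the paper's: tensor $\bar{\mathcal Y}$ with a level-one intertwining operator, restrict along the coset inclusion $\slirr{1}^{k+1}\otimes\vir^{k'}_{1,1}\subset\slirr{1}^{k}\otimes\slirr{1}^{1}$, and identify the simple quotient through the branching rules of Corollary \ref{cor:branching} together with Proposition \ref{prop:intertwiner}. But there is a genuine gap at the identification step. Your argument only guarantees that \emph{some} coset component of the tensored operator survives, say of type $\binom{C_1\otimes\vir^{k'}_{m'',s''}}{(\slirr{m_1}^{k+1}\otimes\vir^{k'}_{m_1,r})\;(P^{k+1}_{m_2}\otimes\vir^{k'}_{m_2,1})}$ with $m_1,m_2$ uncontrolled. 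The Virasoro fusion rules \eqref{eq:fus-vir} then force $s''=r$, but because of the identification $\vir^{k'}_{m,r}\cong\vir^{k'}_{u+v-m,u-r}$ this only says that the first $\sltwo$-label of $S$ is $r$ or $u-r$; it says nothing about the spectral flow index, the label $s$, the parameter $\lambda$, or the type ($D^\pm$, relaxed, ordinary) of $S$, since all of that information sits in the \emph{affine} components of the branching. To exclude those possibilities you must know which level-$(k+1)$ modules $C_1$ admit a nonzero intertwining operator of type $\binom{C_1}{\slirr{m_1}^{k+1}\;P^{k+1}_{m_2}}$. For $m_1\neq 1$ that is precisely a level-$(k+1)$ fusion rule, and no nontrivial affine fusion rules are available at this point of the paper: Theorem \ref{thm:fusion} comes later and its proof rests on this very corollary. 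So your sentence ``the companion affine component \dots\ matches the $X^{k+1}_m$ occurring in the branching of $X^k_r$'' is the desired conclusion, not something your setup delivers.

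The paper's proof is engineered to avoid exactly this circularity: it forces the nonzero matrix coefficient into the components where each input has a \emph{vacuum} tensor factor, namely $\one\otimes w_1\in\slirr{1}^{k+1}\otimes\vir^{k'}_{1,r}$ and $w_2\otimes\one\in P^{k+1}_1\otimes\vir^{k'}_{1,1}$, and uses Corollary \ref{cor:branching} to check that these particular vectors still generate the full inputs, so that Proposition \ref{prop:intertwiner} yields nonvanishing of precisely this coefficient after projecting onto $X^k$. (This is also why the paper tensors with $\slirr{\underline{b}}^1$, $b=r\bmod 2$, rather than with the module vertex operator of a fixed $\slirr{a}^1$ as you propose: with the level-one vacuum in the first slot, the component $\slirr{1}^{k+1}\otimes\vir^{k'}_{1,r}$ is simply absent from $\slirr{r}^k\otimes\slirr{1}^1$ whenever $r$ is even.) With both inputs vacuum-paired, the only fusion facts needed are the trivial ones, $\slirr{1}^{k+1}\boxtimes X^{k+1}_1=X^{k+1}_1$ and $\vir^{k'}_{1,r}\boxtimes\vir^{k'}_{1,1}=\vir^{k'}_{1,r}$, so neither rationality of $\vir_{k'}$ nor the general factorization theorem for intertwining operators of tensor-product vertex algebras (your acknowledged ``hard part'') is needed; the simple image $X^{k+1}_1\otimes\vir^{k'}_{1,r}$ must then occur in the branching of $X^k\otimes\slirr{\underline{b+a+\ell+1}}^1$, and Corollary \ref{cor:branching} pins down $X^k\cong X^k_r$. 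If you add this choice of vectors and the accompanying generation argument, your proof collapses into the paper's; without it, the argument does not close.
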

\begin{proof}
Let $b = r \mod 2$, so that 
 \[
  \slirr{r}^k \otimes \slirr{\underline{b}}^1 \cong \bigoplus_{\substack{m =  1 \\  m \ \text{odd}  }}^{u+v-1} \slirr{m}^{k+1} \otimes \vir_{m, r}^{k'}.
 \]
Let $\mathcal Y^1$ be a non-zero intertwiner of type  ${  \slirr{\underline{b+a + \ell +1}}^1  \choose \slirr{\underline{b}}^1 \,  \slirr{\underline{a+\ell}}^k }$ and consider $\widetilde{\mathcal Y} = \mathcal Y \otimes \mathcal Y^1$.  Let
$w_1$ be a nonzero vector in  $ \vir_{1, r}^{k'}$ and $w_2$ a non-zero generating vector of $P_1^{k+1}$ .
Then $ \one \otimes w_1$ generates $\slirr{r}^k \otimes \slirr{\underline{b}}^1 $ and $w_2 \otimes \one$ generates $P_1^k \otimes \slirr{\underline{a+\ell}}^1$ (since by the previous corollary no proper submodule of  $P_1^k \otimes  \slirr{\underline{a+\ell}}^1$ contains $P_1^{k+1} \otimes \vir_{1, 1}^{k'}$) . If $\widetilde{\mathcal Y}(\one \otimes w_1, z)(w_2 \otimes \one) =0$, then $\widetilde{\mathcal Y} = 0$ by Proposition \ref{prop:intertwiner}, which can't be. 
Let $X^k$ be a simple quotient of $M^k$ and $\pi$ the projection of $M^k$ onto $X^k$. 
We set 
$\mathcal Y_X :=\pi \circ {\mathcal Y}$ and  $\widetilde{\mathcal Y}_X := \mathcal Y_X \otimes \mathcal Y^1$ and by Proposition \ref{prop:intertwiner} we have that $\widetilde{\mathcal Y}_X(\one \otimes w_1, z)(w_2 \otimes \one) \neq 0$ as well.
But $\widetilde{\mathcal Y}_X$ is in particular an intertwining operator of the subvertex operator algebra $\slirr{1}^{k+1} \otimes \vir_{1, 1}^{k'}$ and so  there exists an $\slirr{1}^{k+1} \otimes \vir_{1, 1}^{k'}$ intertwining operator of type ${X^k \otimes \slirr{\underline{b+a + \ell +1}}^1 \choose \slirr{1}^{k+1} \otimes \vir_{1, r}^{k'} \, X_1^{k+1} \otimes \vir_{1, 1}^{k'}}$. The fusion product is
\[
(\slirr{1}^{k+1} \otimes \vir_{1, r}^{k'}) \boxtimes (X^{k+1}_1 \otimes \vir_{1, 1}^{k'}) = (\slirr{1}^{k+1} \boxtimes X^{k+1}_1) \otimes (\vir_{1, r}^{k'} \boxtimes \vir_{1, 1}^{k'}) =  X_1^{k+1} \otimes \vir_{1, r}^{k'}
\]
 and so $X_1^{k+1} \otimes \vir_{1, r}^{k'}$ must be a quotient (as a $\slirr{1}^{k+1} \otimes \vir_{1, 1}^{k'}$-module) of $X^k \otimes   \slirr{\underline{b+a + \ell +1}}^1$. By the previous corollary the only possibility is $X^k \cong X^k_r$. 
\end{proof}
We will prove the existence of vertex tensor category and this implies the following fusion rules. 
\begin{theorem}\label{thm:fusion}
Let $k$ be admissible and assume that $\sWtsl{k}$ is a vertex tensor category. Then the following fusion rules hold
\begin{equation}
\begin{split}
 \slirr{r}^k \boxtimes  \sigma^{\ell}(\slirr{r'}^k) &\cong  \bigoplus_{r''=1}^{u-1} N^{u\ \ r''}_{r, r'}  \sigma^{\ell}(\slirr{r''}^k) \\
 \slirr{r}^k \boxtimes \sigma^\ell(\sldis{r', s}^\pm) &\cong  \bigoplus_{r''=1}^{u-1} N^{u\ \ r''}_{r, r'}  \sigma^\ell(\sldis{r'',s}^\pm) \\
 \slirr{r}^k \boxtimes  \sigma^\ell(\slrel{\lambda}{r',s}) &\cong  \bigoplus_{r''=1}^{u-1} N^{u\ \ r''}_{r, r'}  \sigma^\ell(\slrel{r-1+\lambda}{r'',s}) \\
 \slirr{r}^k \boxtimes \sfmod{\ell}{\slproj{r',s}^k} &\cong  \bigoplus_{r''=1}^{u-1} N^{u\ \ r''}_{r, r'} \sfmod{\ell}{\slproj{r'',s}^k} \\
  \end{split}
\end{equation}
for all $1 \leq r, r' \leq u-1, 1 \leq s \leq v-1$ and $\lambda \in \mathbb C$. 
\end{theorem}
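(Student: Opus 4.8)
The plan is to establish the four fusion rules by reducing everything to two base computations and then propagating by associativity and spectral flow. First I would use that $\sigma$ is realised by tensoring with the invertible simple current $\sigma^\ell(\slirr{1}^k)$, so that $\slirr{r}^k \boxtimes \sigma^\ell(N) \cong \sigma^\ell(\slirr{r}^k \boxtimes N)$ for every weight module $N$; this strips off $\ell$ and reduces all four statements to $\ell = 0$. Once the base cases $\slirr{r}^k\boxtimes(r'{=}1\text{ module})\cong(r\text{ module})$ are proved for all $r$, each module with a general first index $r'$ can be rewritten as $\slirr{r'}^k\boxtimes(r'{=}1\text{ module})$, and associativity gives
\[
\slirr{r}^k \boxtimes (r'\text{ module}) \cong (\slirr{r}^k \boxtimes \slirr{r'}^k)\boxtimes(r'{=}1\text{ module}),
\]
so that the integrable fusion $\slirr{r}^k \boxtimes \slirr{r'}^k \cong \bigoplus_{r''} N^{u\ \ r''}_{r,r'}\slirr{r''}^k$ then yields the general formula. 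A point worth flagging is the weight shift in the relaxed case: writing $\slrel{\lambda}{r',s}\cong \slirr{r'}^k\boxtimes\slrel{\lambda-r'+1}{1,s}$ produces summands $\slrel{r''-r'+\lambda}{r'',s}$, and these agree with the asserted $\slrel{r-1+\lambda}{r'',s}$ precisely because relaxed highest weights are read in $\C/2\ZZ$ (i.e.\ modulo $\rlat$) and $N^{u\ \ r''}_{r,r'}\neq 0$ forces $r+r'+r''$ odd, hence $r''-r'\equiv r-1 \pmod 2$.

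The heart of the argument is the base case $\ell=0$, $r'=1$, for which I would combine rigidity with the coset. Since $\slirr{r}^k$ is an ordinary module it is rigid --- either by the known rigidity of $KL_k(\sltwo)$ at admissible level, or via the coset realisation where it corresponds to integrable and rational (hence rigid) objects --- so $\slirr{r}^k\boxtimes(-)$ is exact and sends projectives to projectives. The two corollaries preceding this theorem already identify the simple quotient of $\slirr{r}^k\boxtimes P_1^k$ as $X_r^k$ for $P_1^k\in\{\slproj{1,s}^k,\slrel{\lambda}{1,s}^k\}$. To upgrade this to an isomorphism I would feed both sides into the branching machinery of Corollary \ref{cor:branching}: tensoring with a suitable $\slirr{\underline{b}}^1$ and restricting to $V = L_{k+1}(\sltwo)\otimes\vir^{k'}$, the $m=1$ summand of $\slirr{r}^k\otimes\slirr{\underline{b}}^1$ is $\slirr{1}^{k+1}\otimes\vir_{1,r}^{k'}$, the level-$(k{+}1)$ vacuum times a Virasoro module. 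Fusing this summand against the branching of the second module therefore collapses to pure (rational, fully known) Virasoro fusion together with trivial fusion by the vacuum, and the second Virasoro index contributes exactly $N^{u\ \ r''}_{r,1}=\delta_{r'',r}$; reading off the $\slirr{1}^{k+1}$-isotypic part then pins down the $r$- and $s$-labels uniquely, exactly as in the proof of that corollary.

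Putting these together, $\slirr{r}^k\boxtimes\slproj{1,s}^k$ is projective (rigidity), has all simple quotients isomorphic to the relevant $\sldis{r,s}^\pm$ with multiplicity one by the Virasoro count, hence is an indecomposable projective cover and so equals $\slproj{r,s}^k$. The discrete base case follows by applying the exact functor $\slirr{r}^k\boxtimes(-)$ to the sequences \eqref{es:DED} and \eqref{ses_proj}, and the relaxed base case follows from the semisimplicity of the block $E^0_{r,s,\lambda}$ together with the multiplicity-one branching count. The integrable fusion $\slirr{r}^k\boxtimes\slirr{r'}^k$ is handled by the same $m=1$ device, now matched against $\vir_{1,r}^{k'}\boxtimes\vir_{1,r'}^{k'}\cong\bigoplus_{r''}N^{u\ \ r''}_{r,r'}\vir_{1,r''}^{k'}$, or simply by citing the known fusion of ordinary modules at admissible level.

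I expect the main obstacle to be the upgrade from ``correct simple top'' and ``correct $m=1$ branching component'' to an honest isomorphism of fusion products, rather than merely an identity in a Grothendieck group. One must simultaneously control the length and the indecomposable structure of $\slirr{r}^k\boxtimes(r'{=}1\text{ module})$, excluding spurious summands or larger indecomposables: exactness and preservation of projectives from rigidity, the simple-quotient corollary, and the multiplicity-one count from the rational Virasoro factor all have to be combined coherently. The delicate step is that the branching only sees the image after tensoring with $\slirr{\underline{b}}^1$ and restricting to $V$, so one must argue that the $\slirr{1}^{k+1}$-isotypic component faithfully detects the underlying level-$k$ module; this, together with the modulo-$\rlat$ bookkeeping in the relaxed case, is where the care is required.
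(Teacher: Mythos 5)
Your overall skeleton (rigidity of the $\slirr{r}^k$ from \cite{CHY}, rigid $\boxtimes$ projective $=$ projective, the preceding corollary to pin down simple quotients, and associativity to pass from $r'=1$ to general $r'$) matches the paper. But the step you yourself flag as delicate is a genuine gap, and it is exactly where the paper does something you do not have. Knowing $\slirr{r}^k \boxtimes P^k_1 \cong n_r P^k_r$ for some $n_r \geq 1$, you propose to get $n_r = 1$ from a ``multiplicity-one Virasoro count'' via Corollary \ref{cor:branching}. This cannot work as described: the branching rules are statements about restriction to the subalgebra $\slirr{1}^{k+1} \otimes \vir_{1,1}^{k'}$ of $\slirr{1}^{k} \otimes \slirr{1}^{1}$, and restriction does not commute with fusion. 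What one actually has is a surjection from the fusion product over the subalgebra onto (the restriction of) the fusion product over $L_k(\sltwo) \otimes L_1(\sltwo)$, and the source of that surjection contains all the cross terms $(\slirr{m}^{k+1}\otimes\vir_{m,r}^{k'}) \boxtimes (\slproj{m',s}^{k+1}\otimes\vir_{m',1}^{k'})$ with $m \neq 1$, which involve precisely the unknown admissible-level-$(k+1)$ fusion products of the same type you are trying to compute. There is no way to ``read off the $\slirr{1}^{k+1}$-isotypic part'' of a fusion product without already knowing that fusion product; this is why the paper's corollary extracts only simple-quotient information from this setup and nothing about multiplicities. The paper settles $n_r=1$ by a purely categorical induction that is absent from your proposal: by associativity and the ordinary fusion rules,
\[
n_{r+1}P^k_{r+1} \oplus P^k_{r-1} \cong (\slirr{2}^k \boxtimes \slirr{r}^k) \boxtimes P^k_1 \cong n_r\,(\slirr{2}^k \boxtimes P^k_r),
\]
and since $P^k_{r+1}$, $P^k_{r-1}$ are inequivalent indecomposable projectives while the right-hand side is $n_r$ copies of a single projective module, every indecomposable summand on the right occurs with multiplicity divisible by $n_r$, whereas $P^k_{r-1}$ occurs exactly once on the left; hence $n_r=1$ (the case $r=u-1$ following from $\slirr{u-1}^k \boxtimes \slirr{u-1}^k \cong \slirr{1}^k$). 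Without this induction, or a genuine substitute, your argument proves only $\slirr{r}^k \boxtimes P^k_1 \cong n_r P^k_r$, in both the projective and relaxed cases.

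A second, smaller gap: your reduction to $\ell=0$ assumes $\slirr{r}^k \boxtimes \sigma^\ell(N) \cong \sigma^\ell(\slirr{r}^k \boxtimes N)$, i.e. that spectral flow is implemented by fusion with the simple current $\sigma^\ell(\slirr{1}^k)$. This is true, but it is not a formal consequence of the existence of vertex tensor category structure; it requires Li's $\Delta$-operator construction of intertwining operators among spectral-flow images (or a citation to a reference where this is established for this category), none of which appears in the paper. The paper avoids the issue entirely by fixing $\ell$ and running the whole argument uniformly in $\ell$: the projectives $\sfmod{\ell}{\slproj{1,s}^k}$ and the relaxed projectives $\sigma^\ell(\slrel{\lambda}{1,s}^k)$ exist for every $\ell$, their composition factors already carry the spectral-flow labels, and exactness of $\slirr{r}^k \boxtimes \bullet$ then yields the fusion rules with $\sigma^\ell(\slirr{1}^k)$ and $\sigma^\ell(\sldis{1,s}^\pm)$ for all $\ell$ at once. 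If you wish to keep your reduction step, you must prove or properly cite the spectral-flow/fusion compatibility.
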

\begin{proof}
The fusion rules
\[
\slirr{r}^k \boxtimes  \slirr{r'}^k \cong  \bigoplus_{r''=1}^{u-1} N^{u\ \ r''}_{r, r'}  \slirr{r''}^k
\]
as well as rigidity of the $\slirr{r}^k$ is proven in \cite{CHY}. \\[-3mm]


We use the notation of the previous Corollary and consider first the case $r'=1$ and $P^k_1$ projective. By the previous Corollary and since the tensor product of a rigid and a projective module is always projective the only possibility is that the projective cover of $X_r^k$ appears with a certain multiplicity $n_r$. 
\[
\slirr{r}^k \boxtimes P^k_1 = n_r P^k_r.
\] 
This multiplicity can't be zero as the tensor product with a module that has a dual can never vanish. 
We claim that $n_r =1$ and this follows by induction for $r$. The case $r=1$ is clear since $\slirr{1}^k$ is the tensor identity. 
Next consider $r>1$ (and $r \neq u-1$), then by associativity, the fusion rules of ordinary modules and by the induction hypothesis we get
\[
n_{r+1}P^k_{r+1} + P^k_{r-1} \cong (\slirr{r+1}^k \oplus  \slirr{r-1}^k) \boxtimes P^k_1 
\cong (\slirr{2}^k \boxtimes \slirr{r}^k) \boxtimes P^k_1 
\cong n_r (\slirr{2}^k \boxtimes P^k_r).
\] 
$P^k_{r+1}$ and $P^k_{r-1}$ are inequivalent indecomposable projective modules, while $\slirr{2}^k \boxtimes P^k_r$ is projective and clearly the only possibility for this isomorphism to be true is that $n_r = 1$. 
The case $r=u-1$ follows immediately from $\slirr{u-1}^k \boxtimes \slirr{u-1}^k \cong \slirr{1}^k$.

Since $\slirr{r}^k$ is rigid, the tensor product with $\slirr{r}^k$ is exact. Since the number of composition factors of $P^k_1$ and $P^k_r$ coincide and since the tensor product with 
$\slirr{r}^k$ cannot be zero clearly the only possibility is that the tensor product of $\slirr{r}^k$  with any simple composition factor of $P^k_1$ is a single simple compsoition factor of $P^k_r$. The only possibility that is consistent with composition series are 
\[
\slirr{r}^k \boxtimes  \sigma^{\ell}(\slirr{1}^k) \cong    \sigma^{\ell}(\slirr{r}^k), \qquad
 \slirr{r}^k \boxtimes \sigma^\ell(\sldis{1, s}^\pm) \cong \sigma^\ell(\sldis{r, s}^\pm).
\]
Finally 
\[
\slirr{r}^k \boxtimes  \sigma^\ell(\slrel{\lambda}{1,s}) \cong \sigma^\ell(\slrel{r-1+\lambda}{r,s})
\]
with $\lambda = \lambda_{r,s}, \lambda_{u-r,v-s} \pmod{\rlat}$ follows from exactness of $\slirr{r}^k \boxtimes \, \bullet \, $.

The claimed fusion rules follow immediately via:
Let $M_1, \dots, M_{u-1}$ be a set of objects in $\sWtsl{k}$  with the property that
\[
\slirr{r}^k \boxtimes M_1 \cong M_r
\]
for all $r=1, \dots, u-1$
then by associativity
\begin{equation} \nonumber
\begin{split}
\slirr{r}^k \boxtimes M_{r'} &\cong \slirr{r}^k \boxtimes (\slirr{r'}^k \boxtimes M_1) 
 \cong (\slirr{r}^k \boxtimes \slirr{r'}^k) \boxtimes M_1 
 \cong \bigoplus_{r''=1}^{u-1} N^{u\ \ r''}_{r, r'}  \slirr{r''}^k \boxtimes M_1 \\
 &\cong \bigoplus_{r''=1}^{u-1} N^{u\ \ r''}_{r, r'}  M_{r''}.
\end{split}
\end{equation}
\end{proof}

\section{The $N=2$ super Virasoro algebra}\label{N=2}

The $N=2$ super Virasoro algebra is the principal $\W$-algebra of $\ssl$.  We denote the universal principal $\W$-algebra of $\ssl$ at level $k$ by $\W^k(\ssl)$ and its simple quotient by $\W_k(\ssl)$. 
It is strongly generated by even fields $L(z), J(z)$ of conformal weight two and one and odd fields $G^\pm(z)$ of conformal weight $3/2$. With the convention
\[
L(z) = \sum_{n \in \mathbb Z} L_n z^{-n-2}, \qquad 
J(z) = \sum_{n \in \mathbb Z} X_n z^{-n-1}, \qquad 
G^\pm(z) = \sum_{n \in \mathbb Z+\frac{1}{2}} G^\pm_n z^{-n-\frac{3}{2}}, \qquad 
\]
the commutation relations of the modes are
\begin{equation}
\begin{split}
[L_m, L_n] &= (m-n) L_{m+n} + \frac{m(m^2-1)}{12} \delta_{m+n, 0} c, \\
[L_m, J_n] &= -n J_{m+n},  \\
[J_m, J_n] &= m \delta_{m+n, 0} \frac{c}{3}, \\
 [L_m, G^\pm_s ] &= \left(\frac{m}{2} -s \right) G^\pm_{m+s},\\
 [J_m, G^\pm_s] &= \pm G^\pm_{m+s}, \\
  [ G^+_r, G^-_s] &= 2L_{r+s} + (r-s) J_{r+s} + \frac{4r^2-1}{12} \delta_{r+s, 0} c.
\end{split}
\end{equation}
These generate the $N=2$ superconformal Lie algebra at central charge $c$. The central charge is $c = \frac{3 \ell}{\ell + 2}$ and $\ell$ is related to $k$ via the Feigin-Frenkel type relation $(\ell+2)(k+1) =1$. 
This relation comes from the Kazama-Suzuki realization \cite{KS}
\[
\W_k(\ssl) \cong \text{Com}(\pi, L_\ell(\sltwo) \otimes \cF)
\]
where $\cF$ is the vertex operator algebra of two free fermions $\psi^+(z), \psi^-(z)$ with non-zero  OPE
\[
\psi^+(z)\psi^-(w) = (z-w)^{-1}.
\]
Here $\pi$ is the rank one Heisenberg vertex operator algebra generated by $h+ 2:\psi^+\psi^-:$ and $\ell$ needs to be non-critical.   

Let $\BssWtsl{k}$ respectively $\ssWtsl{k}$ be the category of grading-restricted generalized $\W^k(\ssl)$ respectively $\W_k(\ssl)$-weight modules, i.e.  lower-bounded generalized $\W^k(\ssl)$ respectively $\W_k(\ssl)$-modules with finite-dimensional conformal weight spaces and such that  the Cartan subalgebra $\mathfrak h = \mathbb C X_0$ acts semisimply. 
Examples of objects in $\BssWtsl{k}$ are Verma-modules $\mathcal V_{\Delta, \lambda}$ of conformal weight $\Delta$ and weight $\lambda$. These are generated by a highest-weight vector $| \Delta, \lambda \rangle$, satisfying
\[
L_0 | \Delta, \lambda \rangle = \Delta | \Delta, \lambda \rangle, \quad J_0 | \Delta, \lambda \rangle = \lambda | \Delta, \lambda \rangle, \quad 
X_n | \Delta, \lambda \rangle = 0, \quad  n>0, \quad X \in \{ G^\pm, L, J\}.
\]
The $N=2$ superconformal Lie algebra at central charge $c$ has an involution $\tau$ acting as 
\[
\tau(G^\pm_r) = G^\mp_r, \qquad \tau(L_n) = L_n, \qquad \tau(J_m) = - J_m. 
\]
For a module $M$ in $\ssWtsl{k}$, we denote by $M^\tau$ the module $M$ twisted by $\tau$, in particular 
\[
\mathcal V_{\Delta, \lambda}^\tau \cong. \mathcal V_{\Delta, -\lambda}.
\]

We return to the Kazama-Suzuki coset relation.
This relation is excellent as it allows to efficiently study the representation theory of $\W_k(\ssl)$ when $\ell$ is an admissible level for $\sltwo$, see \cite{CLRW}.

It in particular means \cite{CKLR} that there is a set of simple $\W_k(\ssl)$-modules $\W_n$ for $n \in 2\mathbb Z$, such that
\begin{equation}\label{eq:ext}
 L_\ell(\sltwo) \otimes \cF \cong \bigoplus_{n \in 2\mathbb Z} \W_n \otimes \pi_n
\end{equation}
as  $\W_k(\ssl) \otimes \pi$-modules 
with $\pi_n$ the Fock-module of highest-weight $n$. 

The correspondence between modules is best phrazed in terms of relative semi-infinite Lie algebra cohomology of $\mathfrak{gl}_1((z))$ relative to $\mathfrak{gl}_1$ with coefficient in 
$\pi^+_\lambda \otimes \pi^-_\mu$. Here $\pi^\pm$ are rank one Heisenberg vertex operator algebras of some level $\pm\kappa$ \cite{CGNS}. Denote this cohomology simply by $H$, then 
\[
H^p(\pi^+_\lambda \otimes \pi^-_\mu) = \delta_{p, 0} \delta_{\lambda, -\mu} \mathbb C.
\]
Let $\pi^+ = \pi$ be the  rank one Heisenberg vertex operator algebra generated by $h+ 2:\psi^+\psi^-:$, it has level $\kappa = 2k+2$, and let $\pi^-$ be another rank one Heisenberg vertex operator algebra of level $-\kappa$.
Then $H^0$ gives a family of functors $H_\lambda$ from $\sWtsl{\ell}$  to the category $\ssWtsl{k}$ of weight modules of $\W_k(\ssl)$, 
\[
H_\lambda: M \mapsto H^0(M \otimes \cF \otimes \pi^-_\lambda).
\]
This functor gives a block-wise equivalence of abelian categories, see the introduction of \cite{CGNS} for the precise statement. In particular for any simple $\W_k(\ssl)$-module $X$
there exists a pair $(M, \pi^-_\lambda)$ with $M$ a simple $L_k(\sltwo)$-module, such that
\[
X \cong H^0(M \otimes \cF \otimes \pi^-_\lambda).
\]
Note, that the pair $(M, \pi^-_\lambda)$ is not unique with this property. 
Due to Theorem \ref{classification} it follows
\begin{corollary} \textup{(Corollary of \cite{CGNS})} \label{cor:fl}
Let $\ell$ be an admissible level for $\sltwo$ and $k$ determined by $(\ell+2)(k+1) =1$. Then $\ssWtsl{k}$ is of finite length.
\end{corollary}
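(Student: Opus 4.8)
The plan is to transport the finite-length property across the Kazama--Suzuki correspondence rather than to argue inside $\ssWtsl{k}$ directly. By \cite{CGNS} the family of functors $H_\lambda$ realizes a block-wise equivalence of abelian categories between $\sWtsl{\ell}$ and $\ssWtsl{k}$; moreover every simple object of $\ssWtsl{k}$ is of the form $H^0(M \otimes \cF \otimes \pi^-_\lambda)$ for a simple $L_k(\sltwo)$-module $M$, so every block of $\ssWtsl{k}$ is equivalent to a block of $\sWtsl{\ell}$. Since having finite length is invariant under equivalences of abelian categories, it suffices to prove that every object of $\sWtsl{\ell}$ has finite length.

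First I would reduce to a single block. An object of $\sWtsl{\ell}$ is finitely generated, hence generated by finitely many weight vectors, each of which lies in only finitely many blocks; therefore every object is supported on finitely many of the blocks classified in Theorem \ref{classification} and decomposes accordingly as a finite direct sum of its block components. Thus it is enough to show that each of the two types of blocks $C_{r,n}$ and $E^n_{r,s,\lambda}$ is of finite length. For the blocks $E^n_{r,s,\lambda}$ this is immediate: every object is a direct sum of copies of the single simple $\sigma^n(\slrel{\lambda}{r,s})$, and a finitely generated object of this form is a finite direct sum, hence of finite length.

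The remaining case is the blocks $C_{r,n}$, and this is where the structural input of \cite{ACK} is used. The simple objects are the $L_m^{r,n}$ for $m \in \ZZ$, the indecomposable projectives are exactly the modules $P_m^{r,n}$, each carrying the length-four Loewy structure recalled above, and the only nonvanishing extensions among simples are $\Ext^1(L_m^{r,n}, L_{m\pm1}^{r,n}) = \CC$. Since $\sWtsl{\ell}$ has enough projectives, with indecomposable projectives given by the $P_m^{r,n}$ together with the projective simples $\sigma^n(\slrel{\lambda}{r,s})$, any finitely generated object of $C_{r,n}$ admits a surjection from a finite direct sum of such projectives, each of which has finite length; hence the object itself is of finite length. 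Combining the two cases shows $\sWtsl{\ell}$ is of finite length, and the equivalence then yields the same for $\ssWtsl{k}$.

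The main obstacle is precisely the last step: one must guarantee that finite generation genuinely forces finite length in $C_{r,n}$, i.e.\ that a finitely generated object is covered by \emph{finitely many} of the length-four projectives $P_m^{r,n}$ rather than by an infinite family indexed by $m \in \ZZ$. This is exactly what the classification of indecomposables and the existence of projective covers in \cite{ACK} supply; everything else is formal once the block-wise equivalence of \cite{CGNS} and Theorem \ref{classification} are in hand.
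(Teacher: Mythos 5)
Your proposal follows exactly the route the paper takes: the paper offers no argument beyond citing the block-wise equivalence of \cite{CGNS} together with the block classification of Theorem \ref{classification}, which is precisely your reduction of finite length for $\ssWtsl{k}$ to finite length for $\sWtsl{\ell}$. The extra details you supply on the affine side (finitely many nonzero block components of a finitely generated object, semisimplicity of the blocks $E^n_{r,s,\lambda}$, and covering a finitely generated object of $C_{r,n}$ by finitely many of the length-four projectives $P^{r,n}_m$) are exactly the content the paper implicitly delegates to \cite{ACK}, so the two arguments coincide in substance.
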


\section{Free field realizations}\label{ff}

We need the Adamovic free field realization \cite{Ad1}.
Consider the lattice $L = {\Bbb Z} c + {\Bbb Z} d$ such that
$$ \langle c, c \rangle = \langle d, d \rangle=0, \ \langle c, d \rangle =2, $$ and lattice vertex algebra $V_L = \pi^{c,d} \otimes {\C}[L]$,  where
$\pi^{c,d}$ is rank two Heisenberg vertex algebra, and ${\C}[L]$ group algebra of lattice $L$. $\pi^{c, d}$ is generated by two Heisenberg fields $c(z), d(z)$ with OPEs
\[
c(z)c(w) = 0 = d(z)d(w) \qquad\text{and} \qquad c(z)d(w) = 2(z-w)^{-2}.
\]
Consider the vertex algebra $\Pi(0) = \pi^{c,d} \otimes {\C}[{\Bbb Z}c]\subset V_L$.
Let $c_0, d_0$ be the zero-modes of $c(z), d(z)$ and let $\pi^{c, d}_{\lambda, \mu}$ be the Fock module of $(c_0, d_0)$ weight $(\lambda, \mu)$. Then 
\[
\Pi(0) = \bigoplus_{n \in \mathbb Z } \pi^{c, d}_{0, 2n}
 \]
 as $\pi^{c, d}$-module. This means that $c_0$ commutes with $\Pi(0)$ in the sense that $[c_0, Y(v, z)] =0$ for any $v \in \Pi(0)$. Hence $\Pi(0) \otimes_{\mathbb C} \mathbb C[c_0]$ is a $\Pi(0)$-module as well. In particular there is a self-extension
\[
\widetilde \Pi(0) := \Pi(0) \otimes_{\mathbb C} \mathbb C[c_0]/c_0^2
\]
on which $c_0$ acts nilpotently, that is 
\[
0 \rightarrow {\Pi(0)} \rightarrow {\widetilde \Pi(0)}\xrightarrow{c_0}{\Pi(0)} \rightarrow 0
\]
Let $\mu $ be defined by $2\mu =\frac{\ell}{2}c +d$.
The simple modules of $\Pi(0)$ are $\Pi_m(\lambda) := \Pi(0) e^{m\mu +\lambda c} $ for $m \in \mathbb Z$ and $\lambda \in \mathbb C$.  $c(0)$ acts on $\Pi_m(\lambda)$ by multiplication with $m$.  Again we have a self-extension 
\[
\widetilde \Pi_m(\lambda) := \Pi_m(\lambda) \otimes_{\mathbb C} \mathbb C[c_0]/(c_0 - m \text{Id})^2
\]
on which $c_0-m \mathrm{Id}$ acts nilpotently, that is 
\begin{equation}\label{pi_ext}
0 \rightarrow {\Pi_m(\lambda)} \rightarrow {\widetilde \Pi_m(\lambda)}\xrightarrow{c_0 - m\mathrm{Id}}{\Pi_m(\lambda)} \rightarrow 0
\end{equation}

\begin{theorem} \textup{\cite{Ad1}}
There is a conformal embedding
\[
V^\ell(\sltwo) \hookrightarrow \vir^\ell \otimes \Pi(0)\]
and  for non-integral admissible level $\ell$,  
\[
L_\ell(\sltwo) \hookrightarrow \vir_\ell \otimes \Pi(0)
\]
mapping $h$ to $\frac{\ell}{2}c +d$ and $c_\ell = 1 - 6 \frac{(\ell+1)^2}{\ell+2}$.
\end{theorem}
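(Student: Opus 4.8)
The plan is to write the three currents of $\asltwo$ as explicit fields inside $\vir^\ell\otimes\Pi(0)$, to check that they close into the operator product expansions of $\asltwo$ at level $\ell$, to invoke the universal property of $V^\ell(\sltwo)$ to obtain a vertex algebra homomorphism, and finally to descend to the simple quotients. Write $T(z)=\sum_n L_n z^{-n-2}$ for the Virasoro field of $\vir^\ell$ and use the Heisenberg fields $c(z),d(z)$ together with the lattice fields $e^{\pm c}(z)$ on $\Pi(0)$. Since $\langle c,c\rangle=\langle d,d\rangle=0$ and $\langle c,d\rangle=2$, I first equip $\pi^{c,d}$ with a conformal vector obtained from the standard (central charge $2$) one by adding a background charge $\lambda$ chosen so that $e^{\pm c}$ have conformal weights $\pm1$; this fixes $\langle\lambda,c\rangle$, and matching the total central charge to the Sugawara value $\tfrac{3\ell}{\ell+2}$ forces both $\langle\lambda,\lambda\rangle=-\ell/2$ and the Virasoro central charge $c_\ell=1-6\frac{(\ell+1)^2}{\ell+2}$ asserted in the statement, which coincides with $c(t)=13-6t-6t^{-1}$ at $t=\ell+2$.

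For the currents I take the ansatz
\[
e(z)=e^{c}(z),\qquad h(z)=\tfrac{\ell}{2}c(z)+d(z),\qquad f(z)=A(z)\,e^{-c}(z),
\]
where $A$ ranges over the weight-two fields built from $T$, $c$ and $d$. The relation $e(z)e(w)\sim0$ and the weight relations $h(z)e(w)\sim\frac{2e(w)}{z-w}$, $h(z)f(w)\sim\frac{-2f(w)}{z-w}$ follow from the lattice charges $\langle c,c\rangle=0$, $\langle h,\pm c\rangle=\pm2$ and the vanishing of the $h_0$-residue on $A$, while $h(z)h(w)\sim\frac{2\ell}{(z-w)^2}$ records precisely the level through $\langle h,h\rangle=2\ell$. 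The real content is in $e(z)f(w)\sim\frac{\ell}{(z-w)^2}+\frac{h(w)}{z-w}$ and $f(z)f(w)\sim0$: expanding $e^{c}(z)\,A(w)e^{-c}(w)$ and collecting the first- and second-order poles determines all coefficients of $A$ uniquely, cancels the spurious poles, and is consistent only when the central charge of $T$ takes the value $c_\ell$ above. This is the one place where $c_\ell$ is produced rather than assumed.

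Once the OPEs are verified, the reconstruction theorem gives a vertex algebra homomorphism $\phi\colon V^\ell(\sltwo)\to\vir^\ell\otimes\Pi(0)$. I would prove injectivity by a PBW/symbol argument: $V^\ell(\sltwo)$ has a PBW basis in the modes of $e,h,f$, and the images of these basis monomials have linearly independent leading terms with respect to the triple grading by $c_0$-charge, lattice degree and $T$-weight, so $\ker\phi=0$. Composing with the canonical surjection $\vir^\ell\twoheadrightarrow\vir_\ell$ gives $\bar\phi\colon V^\ell(\sltwo)\to\vir_\ell\otimes\Pi(0)$. Writing $L_\ell(\sltwo)=V^\ell(\sltwo)/J$ with $J$ the maximal ideal, it suffices to show $\bar\phi(J)=0$ and $\bar\phi\neq0$: since $L_\ell(\sltwo)$ is simple, the induced map $L_\ell(\sltwo)\to\vir_\ell\otimes\Pi(0)$ is then automatically injective.

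The main obstacle is showing $\bar\phi(J)=0$, i.e. that $\phi(v_{\mathrm{sing}})\in N^\ell\otimes\Pi(0)$, where $v_{\mathrm{sing}}$ generates $J$ and $N^\ell=\ker(\vir^\ell\to\vir_\ell)$. This is exactly where non-integral admissibility of $\ell$ enters. By Proposition \ref{thm:vermared} the singular vector of $\vir^\ell$ generating $N^\ell$ sits at a conformal weight $h_{r,s}(t)$ with Kac labels attached to $t=\ell+2=u/v$, and for $v\geq2$ this matches the weight of the $\asltwo$ singular vector. I would identify $\phi$ with the $e^{-c}$-translate of a Virasoro screening, so that $\phi(v_{\mathrm{sing}})$ becomes a $c,d$-dressing of the Virasoro null vector and therefore lies in $N^\ell\otimes\Pi(0)$; a weight-space count on the two $\mathbb{C}c_0$-graded sides then rules out any residual term. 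Because $L_\ell(\sltwo)$ is simple, this yields the stated embedding $L_\ell(\sltwo)\hookrightarrow\vir_\ell\otimes\Pi(0)$.
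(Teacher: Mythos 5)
The paper itself contains no proof of this statement: it is imported verbatim from \cite{Ad1}, so your proposal must be measured against Adamovi\'c's argument. In outline you follow the same route he does: the explicit fields $e=e^{c}$, $h=\tfrac{\ell}{2}c+d$, $f=A\,e^{-c}$ with $A$ of conformal weight two (these are Eqs.~(17)--(19) of \cite{Ad1}), closure of the OPEs forcing the central charge $c_\ell$, the universal property of $V^\ell(\sltwo)$, and descent to the simple quotients by showing that the generating singular vector of the maximal ideal $J$ is killed in $\vir_\ell\otimes\Pi(0)$. Your level and central-charge bookkeeping ($\langle h,h\rangle=2\ell$, total central charge matching the Sugawara value) is correct, and the reduction ``$\bar\phi(J)=0$ and $\bar\phi\neq 0$ suffices, by simplicity of $L_\ell(\sltwo)$'' is sound. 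The problem is that the two steps carrying all of the mathematical content are asserted via mechanisms that do not function as described.

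First, injectivity of $\phi\colon V^\ell(\sltwo)\to\vir^\ell\otimes\Pi(0)$. The zero mode $c_0$ acts as zero on every Fock component of $\Pi(0)$ (the paper records $\Pi(0)=\bigoplus_{n}\pi^{c,d}_{0,2n}$), so your ``triple grading by $c_0$-charge, lattice degree and $T$-weight'' is in fact only a double grading: lattice degree is the $d_0$- (equivalently $h_0$-) eigenvalue, and the other grading is conformal weight. Within a fixed bidegree there are many PBW monomials, and you give no argument for the linear independence of their images; moreover, at exactly the levels where injectivity is not automatic ($V^\ell(\sltwo)$ non-simple, e.g.\ $\ell$ admissible) one cannot fall back on simplicity. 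Since any nonzero ideal of $V^\ell(\sltwo)$ contains a singular vector, what must actually be shown is that no singular vector is annihilated by $\phi$ --- a genuine computation (or a comparison with the Wakimoto realization), not a consequence of the gradings you list. Second, and more seriously, the descent: the claim $\phi(v_{\mathrm{sing}})\in N^\ell\otimes\Pi(0)$, where $N^\ell=\ker(\vir^\ell\to\vir_\ell)$, is the computational heart of Adamovi\'c's theorem. ``Identify $\phi$ with the $e^{-c}$-translate of a Virasoro screening'' is not a defined operation, and the concluding ``weight-space count on the two $\mathbb{C}c_0$-graded sides'' is vacuous for the same reason as above. Your supporting numerology is also off: the affine singular vector has conformal weight $(u-1)v$ and $h_0$-weight $2(u-1)$, whereas the generator of $N^\ell$ has conformal weight $(u-1)(v-1)$; these are compatible only after subtracting the weight $u-1$ of the lattice factor $e^{(u-1)c}$ that carries the image, so the weights do not ``match'' in the sense you claim. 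In short, the proposal reproduces the correct skeleton of the cited proof but leaves both load-bearing steps --- non-vanishing of the image of the singular vector in the universal target, and its membership in $N^\ell\otimes\Pi(0)$ --- unproven.
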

In section 7 of \cite{Ad1} simple modules of $\vir_\ell \otimes \Pi(0)$ are identified with modules in $\sWtsl{\ell}$, see in particular Proposition 7.1 and Remark 6 there. Comparing with the list of modules and the complete list of inequivalent simple objects in $\sWtsl{\ell}$ we see
\begin{proposition}\textup{\cite[Section 7]{Ad1} }\label{prop:ada}
Let $\ell$ be admissible and not integral. 
Let 
\[
I := \left\{ (r, s,  m) \ | \ r = 1,\dots,u-1, \ s = 1,\dots,v-1 \ \text{and} \ m \in \mathbb Z\right\}
\]
then the two sets of objects in $\sWtsl{\ell}$ coincide
\begin{equation}\nonumber
\begin{split}
		S_1 &:= \left\{ \sigma^m(\slrel{\lambda}{r,s}) \ | \  (r, s, m) \in I  \ \text{and} \ \lambda \in \alg{h}^* \ \text{with} \ \lambda \neq \lambda_{r,s}, \lambda_{u-r,v-s} \pmod{\rlat} \ \right\}\\ & \qquad   \cup \ 	\left\{ {\sfmod{m }{\slindrel{r,s}^-}} \ | \  (r, s, m) \in I  \right\} \\
		S_2&:= \left\{  	 \Pi_m(\lambda) \otimes \vir^\ell_{r, s} \ |  \  (r, s, m) \in I  \ \text{and} \ \lambda \in \alg{h}^* \pmod{\rlat} \ \right\}.
		\end{split}
	\end{equation}
In particular let
 $M$ be a simple object in $\sWtsl{\ell}$, then there exists a simple $\vir_\ell \otimes \Pi(0)$-module $\widetilde M$, such that its simple quotient as an object in $\sWtsl{\ell}$ is isomorphic to $M$.
\end{proposition}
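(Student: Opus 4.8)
The statement is Adamovi\'c's \cite[Section 7]{Ad1}, so the task is to match the two parametrizations, and I would carry this out by restricting each simple $\vir_\ell \otimes \Pi(0)$-module along the conformal embedding $L_\ell(\sltwo) \hookrightarrow \vir_\ell \otimes \Pi(0)$ and identifying the resulting $\sltwo$-weight module through its weight and Casimir data, comparing the outcome against the classification of Theorem \ref{rhwsimples}. Since the simple $\vir_\ell \otimes \Pi(0)$-modules are exactly the outer products $\Pi_m(\lambda) \otimes \vir^\ell_{r,s}$ of a simple $\Pi(0)$-module with a simple Virasoro module, it suffices to treat these.

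First I would compute the action of the Cartan and of the Casimir on the top of such a module. On the generating vector $e^{m\mu + \lambda c}$ of $\Pi_m(\lambda)$ the zero modes $c_0, d_0$ act through the lattice pairing; using $\langle c, c\rangle = \langle d, d\rangle = 0$, $\langle c, d\rangle = 2$ and $2\mu = \frac{\ell}{2}c + d$ one finds $c_0 = m$ and that $h_0 = \frac{\ell}{2}c_0 + d_0$ acts on the top by $m\ell + 2\lambda$. Thus $m$ is precisely the spectral-flow index: the module is the $\sigma^m$-twist of a module whose top carries the dense weight string of $\sltwo$-weight $2\lambda$, the factor identifying the free-field parameter with the relaxed weight by an explicit affine reparametrization of $\alg{h}^*/\rlat$. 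The conformal weight is read off from the Virasoro weight $h_{r,s}(t)$ with $t = \ell+2$, and the quadratic Heisenberg contribution recombines so that the Casimir acts by $\Delta_{r,s}$. For generic $\lambda$ the relaxed module $\slrel{\lambda}{r,s}$ is simple, and since $\Pi_m(\lambda)\otimes \vir^\ell_{r,s}$ then has the matching character, its restriction must be $\sigma^m(\slrel{\lambda}{r,s})$.

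The remaining work, and the main obstacle, is the degenerate values $\lambda \equiv \lambda_{r,s}$ or $\lambda \equiv \lambda_{u-r,v-s} \pmod{\rlat}$, where the dense weight module acquires a highest- or lowest-weight submodule and the relaxed module ceases to be simple. Here the restriction of $\Pi_m(\lambda)\otimes \vir^\ell_{r,s}$ has the same character as $\sfmod{m}{\sldis{r,s}^{-}}\oplus\sfmod{m}{\sldis{u-r,v-s}^{+}}$, and I would pin down the actual module structure by the local-nilpotency test used in the proof of Corollary \ref{cor:branching}: computing the action of $\sigma^m(e_0)$ and $\sigma^m(f_0)$ directly on the free-field realization should show that $\sigma^m(f_0)$ acts locally nilpotently while $\sigma^m(e_0)$ does not. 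This rules out both the split sum and the $^{+}$-type indecomposable and identifies the restriction as $\sfmod{m}{\slindrel{r,s}^{-}}$; performing this computation explicitly on the lattice realization is the delicate step.

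Collecting the generic and degenerate cases as $\lambda$ ranges over $\alg{h}^*/\rlat$ and $(r,s,m)$ over $I$ yields $S_1 = S_2$. For the final assertion, I would use the identifications of Theorem \ref{rhwsimples}, which express the integrable modules and all $\sfmod{m}{\sldis{r,s}^{-}}$ as spectral flows of $\sldis{\cdot,\cdot}^{+}$-type modules: every simple object of $\sWtsl{\ell}$ is then either a simple relaxed module $\sigma^m(\slrel{\lambda}{r,s})$, which is itself a simple $\vir_\ell \otimes \Pi(0)$-module restricting to $M$, or a discrete module $\sfmod{m}{\sldis{u-r,v-s}^{+}}$, which is the simple quotient of $\sfmod{m}{\slindrel{r,s}^{-}} = \Pi_m(\lambda_{r,s}) \otimes \vir^\ell_{r,s}$ by the exact sequence \eqref{es:DED}. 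In either case the required simple $\vir_\ell \otimes \Pi(0)$-module $\widetilde{M}$ exists, proving the claim.
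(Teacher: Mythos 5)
Your overall strategy is sound, and it is worth noting that it is essentially the route of the cited reference rather than of the paper: the paper gives no proof of this proposition at all, but imports it from \cite[Section 7]{Ad1} (Proposition 7.1 and Remark 6 there) and simply compares Adamovi\'c's list of simple $\vir_\ell \otimes \Pi(0)$-modules with the classification of Theorem \ref{rhwsimples}. Within your reconstruction, the weight computation is correct ($c_0$ acts by $m$ and $h_0 = \tfrac{\ell}{2}c_0 + d_0$ by $m\ell + 2\lambda$ on the top of $\Pi_m(\lambda)$), the generic-case identification by matching characters against the classification is the right argument, and your derivation of the final ``in particular'' claim from the identifications in Theorem \ref{rhwsimples} together with the sequences \eqref{es:DED} is correct.

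The genuine problem is the test you propose for the degenerate values of $\lambda$: showing that $\sigma^m(f_0)$ acts locally nilpotently while $\sigma^m(e_0)$ does not cannot identify the module, because neither statement is true of $\sfmod{m}{\slindrel{r,s}^-}$. Local nilpotency passes to quotients and submodules: by \eqref{es:DED} the module $\sfmod{m}{\slindrel{r,s}^-}$ surjects onto $\sfmod{m}{\sldis{u-r,v-s}^+}$, on whose top level (an infinite-dimensional highest-weight $\sltwo$-module) $f_0$ acts freely, so $f_0$ is not locally nilpotent on $\sfmod{m}{\slindrel{r,s}^-}$; dually $e_0$ fails already on the submodule $\sfmod{m}{\sldis{r,s}^-}$. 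In fact neither $e_0$ nor $f_0$ is locally nilpotent on any of the three candidates $\sfmod{m}{\slindrel{r,s}^+}$, $\sfmod{m}{\slindrel{r,s}^-}$, $\sfmod{m}{\sldis{r,s}^-}\oplus\sfmod{m}{\sldis{u-r,v-s}^+}$, so the test as stated distinguishes nothing. (The paper uses the same loose phrase in the proof of Corollary \ref{cor:branching}, but it is not literally correct there either.) The discriminator that does work is the existence of relaxed singular vectors: $\sfmod{m}{\slindrel{r,s}^-}$ contains a vector annihilated by $\sigma^m(f_0)$ and all positive modes but no vector annihilated by $\sigma^m(e_0)$ and all positive modes, the $+$-type module has these roles reversed, and the split sum contains both kinds; equivalently one tests injectivity of $\sigma^m(e_0)$ and $\sigma^m(f_0)$ on the top level. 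With this corrected criterion your plan does go through, and the verification on the free-field side is exactly what \cite{Ad1} does: since $e(z)$ is realized as the lattice vertex operator attached to $e^c$, the mode $\sigma^m(e_0)$ acts without kernel on the dense top level of $\Pi_m(\lambda)\otimes\vir^\ell_{r,s}$, which rules out the $+$-type indecomposable and the split sum at once.
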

Tensoring the self-extensions \eqref{pi_ext} with $\vir^\ell_{r, s}$ gives the self-extensions 
\begin{equation}
0 \rightarrow {\Pi_m(\lambda)\otimes \vir^\ell_{r, s}} \rightarrow {\widetilde \Pi_m(\lambda)\otimes \vir^\ell_{r, s}}\xrightarrow{c_0 - m\mathrm{Id}}{\Pi_m(\lambda)\otimes \vir^\ell_{r, s}} \rightarrow 0
\end{equation}
These are in particular self-extensions as modules for $L_\ell(\sltwo)$. However they are not in $\sWtsl{\ell}$, since $c_0$ and hence $h_0=\frac{\ell}{2}c(0) +d(0)$ doesn't act semisimply.
We record this as a corollary
\begin{corollary}
For every object $M$ in $S_1$ there exists a self-extension $\widetilde M$, that is a non-split exact sequence
\[
0 \rightarrow M \rightarrow \widetilde M \rightarrow M \rightarrow 0.
\]
$\widetilde M$ is a module for $L_\ell(\sltwo)$, but it is not an object of $\sWtsl{\ell}$, since $h_0=\frac{\ell}{2}c_0 + d_0$ doesn't act semisimply.

\end{corollary}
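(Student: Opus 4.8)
The plan is to reduce the statement to the explicit Fock-module self-extensions \eqref{pi_ext} already in hand and to transport them along the identifications of Proposition \ref{prop:ada}. First I would invoke that proposition: since $S_1 = S_2$, every $M \in S_1$ is isomorphic, as an object of $\sWtsl{\ell}$ and hence as an $L_\ell(\sltwo)$-module, to some $\Pi_m(\lambda) \otimes \vir^\ell_{r,s}$ with $(r,s,m) \in I$ and $\lambda \in \alg{h}^*$. Fixing such an isomorphism reduces the problem to producing the desired self-extension for $\Pi_m(\lambda) \otimes \vir^\ell_{r,s}$ and pulling it back along that isomorphism.

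Next I would simply take $\widetilde M$ to be $\widetilde\Pi_m(\lambda) \otimes \vir^\ell_{r,s}$, where $\widetilde\Pi_m(\lambda) = \Pi_m(\lambda) \otimes_{\mathbb C} \mathbb C[c_0]/(c_0 - m\mathrm{Id})^2$. Tensoring the short exact sequence \eqref{pi_ext} with $\vir^\ell_{r,s}$ yields a short exact sequence of $\vir_\ell \otimes \Pi(0)$-modules, which restricts along the conformal embedding $L_\ell(\sltwo) \hookrightarrow \vir_\ell \otimes \Pi(0)$ to a short exact sequence of $L_\ell(\sltwo)$-modules $0 \to M \to \widetilde M \to M \to 0$, as displayed just above the corollary.

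The hard part is to certify that this restricted sequence does not split, since restriction along a subalgebra can in principle trivialize a non-split extension. I would argue through the Cartan generator $h_0 = \frac{\ell}{2}c_0 + d_0$, the image of $h$ in $L_\ell(\sltwo)$. By construction of \eqref{pi_ext} the operator $c_0$ acts on $\widetilde\Pi_m(\lambda)$ as $m\,\mathrm{Id} + N$ with $N^2 = 0$ and $N \neq 0$, while $d_0$ acts semisimply and commutes with $c_0$; since $\ell \neq 0$ for admissible $\ell$ (indeed $\ell = -2 + \frac{u}{v}$ with $u, v \geq 2$ coprime precludes $\ell = 0$), the operator $h_0$ has nonzero commuting nilpotent part $\frac{\ell}{2}N$ and is therefore not semisimple on $\widetilde M$. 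This immediately gives $\widetilde M \notin \sWtsl{\ell}$, since objects of $\sWtsl{\ell}$ have $\mathbb{C} h_0$ acting semisimply. It also forces non-splitting: a split sequence would give $\widetilde M \cong M \oplus M$ as $L_\ell(\sltwo)$-modules, on which $h_0$ would act semisimply because it does so on $M \in \sWtsl{\ell}$, contradicting the previous sentence. Apart from this non-splitting check, the argument is pure bookkeeping with Proposition \ref{prop:ada} and the explicit construction \eqref{pi_ext}.
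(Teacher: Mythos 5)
Your proposal is correct and takes essentially the same route as the paper: the paper's own (implicit) proof is precisely the paragraph preceding the corollary, namely tensoring the self-extension \eqref{pi_ext} with $\vir^\ell_{r,s}$, restricting along the embedding $L_\ell(\sltwo) \hookrightarrow \vir_\ell \otimes \Pi(0)$, identifying the objects of $S_1$ with those of $S_2$ via Proposition \ref{prop:ada}, and concluding from the non-semisimple action of $c_0$, hence of $h_0 = \frac{\ell}{2}c_0 + d_0$, that $\widetilde M$ is not in $\sWtsl{\ell}$. Your explicit non-splitting argument (a splitting would give $\widetilde M \cong M \oplus M$, on which $h_0$ acts semisimply) and your note that $d_0$ stays semisimple and commutes with $c_0$ are exactly the details the paper leaves implicit, and they are the correct reading of the construction, since $\Pi(0) \otimes_{\mathbb C} \mathbb C[c_0]$ deforms only the $c_0$-eigenvalue.
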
 
This Corollary is needed for the existence of the indecomposable modules $\sfmod{\ell}{\slproj{r,s}^k}$ \cite[Appendix A]{ACK}.

Consider $L_\ell(\sltwo) \otimes \cF$, we denote the fermions by $\widetilde \psi^\pm$  and let us bosonize the fermions, that is 
\[
\cF \cong \bigoplus_{m \in \mathbb Z} \pi^g_m
\]
with $g$ a rank one Heisenberg vertex operator algebra of level one, identified with $g = :\widetilde \psi^+ \widetilde \psi^-:$. 
We choose a new basis 
\[
Z:= \frac{\ell}{2}c + d +2g, \qquad
Y := g- c, \qquad
X := -\frac{4}{\ell}g -\frac{2}{\ell} d + \left(1 + \frac{4}{\ell}\right)c. 
\]
This basis is chosen such that these three fields are orthogonal on each other and such that $\pi^Z$ is the image of $\pi$ under the embedding of $L_\ell(\sltwo) \otimes \cF \hookrightarrow \vir_{c_\ell} \otimes \Pi(0) \otimes \cF$. Note that $Y^2=1$ and $X^2 = -8\frac{\ell+2}{\ell^2}$.
We decompose $ \Pi(0) \otimes \cF$ into Fock modules
\begin{equation}\nonumber
\Pi(0) \otimes \cF \cong \bigoplus_{\substack{ n \in 2 \mathbb Z \\ m \in \mathbb Z}} \pi^c_0 \otimes \pi^d_n \otimes \pi^g_m 
\cong \bigoplus_{\substack{ n \in 2 \mathbb Z \\ m \in \mathbb Z}} \pi^Z_{n+2m} \otimes \pi^Y_m \otimes \pi^X_{ -\frac{4}{\ell}m- \frac{2}{\ell}n} 
\end{equation}
so that 
\[
\text{Com}(\pi^Z, \Pi(0) \otimes \cF) \cong \bigoplus_{m \in \mathbb Z}  \pi^Y_m \otimes \pi^X
\cong \cF \otimes \pi^X.
\]
Here we use that $ \bigoplus_{m \in \mathbb Z}  \pi^Y_m \cong \cF$ and let us 
set $\psi^+ := e^{-Y}, \psi^-:= e^Y$.
Note that  with replacing $\pi$ by $\pi^Z$ the coset is also realized as semiinfinite cohomology, $H^0( \Pi(0) \otimes \cF \otimes \pi^-) = \text{Com}(\pi^Z \Pi(0) \otimes \cF)$. 
In particular the embeddings
$V^\ell(\sltwo) \hookrightarrow \vir^\ell \otimes \Pi(0)$ and 
 $L_\ell(\sltwo) \hookrightarrow \vir_\ell \otimes \Pi(0)$ induce
\begin{equation}\nonumber
\begin{split}
\W^k(\ssl)  &\cong \text{Com}(\pi, V^\ell(\sltwo) \otimes \cF) \hookrightarrow \text{Com}(\pi^Z,  \vir^{c_\ell} \otimes \Pi(0) \otimes \cF) \cong  \vir^\ell \otimes \cF \otimes \pi^X\\
\W_k(\ssl)  &\cong \text{Com}(\pi, L_\ell(\sltwo) \otimes \cF) \hookrightarrow \text{Com}(\pi^Z,  \vir_{c_\ell} \otimes \Pi(0) \otimes \cF) \cong  \vir_\ell \otimes \cF \otimes \pi^X.
\end{split}
\end{equation}
Note that in the free field realization the field $e(z)$ of $V^k(\sltwo)$ corresponds to $e^c$ and in the coset there is the field $G^+=:e(z) c(z): =e^{c-g}$ which after fermionization is identified with one of the fermions, $\psi^+$, that is $G^+ = \psi^+$ in the free field realization. The field $e(z), h(z), f(z)$ are given explicitely in \cite[Eq. 17]{Ad1} and the Virasoro field is given in  \cite[Eq. 19]{Ad1}.
 It is a quick computation that then
\begin{equation}\label{Gminus}
\begin{split}
J &= -\frac{\ell}{2(\ell+2)} X + :\psi^+ \psi^-:\\
G^+ &= \psi^+ \\
G^- &= \left( (\ell + 2) L - :\left( :\psi^+ \psi^-: - \frac{\ell}{4} X\right)\left( :\psi^+ \psi^-:- \frac{\ell}{4} X\right):\right. \\
&\qquad \left. - (\ell+1) \frac{d}{dz}\left(  :\psi^+ \psi^-: - \frac{\ell}{4} X\right)\right) \psi^- \\
T&= L + L^{\psi} + L^X + \frac{d}{dz} \left( \frac{\ell}{4}X -  :\psi^+ \psi^-: \right) \\
\end{split}
\end{equation}
with $L$ the Virasoro field of $\vir^\ell$ respectively of $\vir_\ell$.
Introduce some notation 
\begin{notation}${}$ \label{notation}

\begin{itemize}
\item Let $A = \{a_1, \dots, a_n\}$, $a_i \in \mathbb Z$ with $a_1\geq a_2 \geq \dots \geq a_n \geq 1$ and set $|A| = n$. 
\item Let $C = \{c_1, \dots, c_m\}$, $c_i\in \mathbb Z$ with $c_1\geq c_2 \geq \dots \geq c_m \geq 1$ and set $|C| = m$.
\item Let $B = \{b_1, \dots, b_r\}$, $b_i\in \frac{1}{2}\mathbb Z$ with $b_1 >  b_2 > \dots > b_r \geq 1/2$ and $b_i \not\in \mathbb Z$ and set $|B| = r$.  
\item Let $\tilde B = \{\tilde b_1, \dots, \tilde b_s\}$, $\tilde b_i \in \frac{1}{2}\mathbb Z$ with $\tilde b_1 >  \tilde b_2 > \dots >\tilde b_s \geq 1/2$ and  $\tilde b_i \not\in \mathbb Z$ set $|\tilde B| = s$.  
\item Set 
\begin{equation}
\begin{split}
T_{-A} &= T_{-a_1} \dots T_{-a_n}, \qquad  L_{-A} = L_{-a_1} \dots L_{-a_n}, \\
G^+_{-B} &= G^+_{-b_1} \dots  G^+_{-b_r},  \qquad  \psi^+_{-B} = \psi^+_{-b_1} \dots  \psi^+_{-b_r},  \\
G^-_{-\tilde B} &= G^-_{-\tilde b_1} \dots  G^-_{-\tilde b_s},\qquad \psi^-_{-\tilde B} = \psi^-_{-\tilde b_1} \dots  \psi^-_{-\tilde b_s}, \\
J_{-C} &= J_{-c_1} \dots  J_{-c_m}, \qquad X_{-C} = X_{-c_1} \dots  X_{-c_m}. 
\end{split}
\end{equation}
\item 
We say that the monomial $L_{-A}X_{-C}\psi^+_{-B} \psi^-_{-\tilde B}$ has $L$-degree $|A|$, $X$-degree $|C|$, et cetera.
\end{itemize}
\end{notation}

\begin{corollary}\label{cor:mod_realized}
Let $\ell \notin \{ 0-2\}$ and $M_h$ be a $\vir^\ell$ module generated by a highest-weight vector $v$ of highest-weight $h \in \mathbb C$, then 
\[
M_h \otimes \cF \otimes \pi^X_\lambda \cong \W^k(\ssl).v
\]
for  $\lambda \in \mathbb C$ and 
$ h \notin \left\{ \frac{1}{\ell+2}  \left( \left( r-1+ \frac{\ell}{4} \lambda \right)^2 - (\ell+1)\left(r-1 + \frac{\ell}{4}\lambda\right) \right) \ \Big|   \ r \in \mathbb Z_{>0} \ \right\}$. 
\end{corollary}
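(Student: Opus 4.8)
The plan is to exhibit $M_h \otimes \cF \otimes \pi^X_\lambda$ as the cyclic $\W^k(\ssl)$-module generated by $v$, where the highest-weight vector of $M_h$ is identified with $v \otimes \one \otimes v_\lambda$, the tensor product of $v$ with the fermion vacuum and with the charge-$\lambda$ highest-weight vector of $\pi^X_\lambda$. Since $\W^k(\ssl)$ acts on $M_h \otimes \cF \otimes \pi^X_\lambda$ through the free-field embedding \eqref{Gminus}, the inclusion $\W^k(\ssl).v \subseteq M_h \otimes \cF \otimes \pi^X_\lambda$ is automatic, and the entire content is the reverse inclusion. The target is spanned by the free-field monomials $L_{-A} X_{-C} \psi^+_{-B} \psi^-_{-\tilde B}\, v$ of Notation \ref{notation}, so it suffices to prove that each of these lies in $\W^k(\ssl).v$. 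I would do this by comparing them with the ordered $\W$-monomials $T_{-A} J_{-C} G^+_{-B} G^-_{-\tilde B}\, v$ and showing, by induction with respect to a suitable filtration of each conformal-weight space, that the change of spanning sets is triangular with nonzero diagonal; the nonvanishing of the diagonal entries is exactly what the hypotheses on $h$ and $\ell$ guarantee.

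The diagonal coefficients follow from \eqref{Gminus}. Since $G^+_{-b} = \psi^+_{-b}$, the $G^+$-modes produce the $\psi^+$-excitations exactly, with coefficient $1$. Since $J_{-c} = -\tfrac{\ell}{2(\ell+2)} X_{-c} + (\,:\!\psi^+\psi^-\!:\,)_{-c}$, the operator $J_{-c}$ produces $X_{-c}$ with coefficient $-\tfrac{\ell}{2(\ell+2)}$ modulo terms of strictly smaller $X$-degree and larger fermion content, and this coefficient is nonzero precisely because $\ell \notin \{0,-2\}$. Likewise $T_{-a} = L_{-a} + (\text{$\psi$- and $X$-excitations})$ produces $L_{-a}$ with coefficient $1$ modulo terms of smaller $L$-degree. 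Thus the three generators $G^+,J,T$ account for the $\psi^+$-, $X$-, and $L$-excitations with invertible leading coefficients.

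The delicate generator is $G^- = Q\,\psi^-$ with $Q = (\ell+2)L - :\!P^2\!: - (\ell+1)\,\partial P$ and $P = :\!\psi^+\psi^-\!: - \tfrac{\ell}{4} X$. Applying $G^-_{-\tilde b}$, the unique term that creates one new $\psi^-$ at level $\tilde b$ while leaving the remaining factors undisturbed carries the scalar $Q_0$ evaluated on the current state; all remaining contributions either annihilate an existing $\psi^+$ or excite the Virasoro and Heisenberg factors, and are arranged to be lower in the filtration. A direct computation of the zero-mode eigenvalue on a state of $X$-charge $\lambda$ whose fermionic zero-mode charge has been shifted to $r-1$ by the $\psi^-$-modes created so far gives
\[
Q_0 = (\ell+2)\left( h - \frac{1}{\ell+2}\left[\left(r-1+\tfrac{\ell}{4}\lambda\right)^2 - (\ell+1)\left(r-1+\tfrac{\ell}{4}\lambda\right)\right]\right),
\]
so that the leading coefficient for the creation of the $r$-th $\psi^-$ is nonzero for every $r \in \ZZ_{>0}$ exactly when $h$ avoids the displayed exceptional set. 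With all diagonal coefficients nonzero, the induction shows that every free-field monomial lies in $\W^k(\ssl).v$, giving $M_h \otimes \cF \otimes \pi^X_\lambda = \W^k(\ssl).v$ and hence the asserted isomorphism.

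The main obstacle is the organization of the $G^-$-calculation. One must choose the filtration on each weight space so that simultaneously the fermion-lowering and $(L,X)$-raising corrections in $Q\psi^-$ count as strictly lower order, while the $T$- and $J$-corrections (which move in the opposite direction in $L$- and $X$-degree) also remain controlled; and one must track how the fermionic zero-mode charge advances through the integers $r$ as successive $\psi^-$-modes are created, so that the nonvanishing condition becomes the stated $r$-indexed family rather than a single equation. Once the filtration is fixed and $Q_0$ is computed as above, the remaining steps are routine bookkeeping.
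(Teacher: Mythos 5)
You have the right free-field input and even the right exceptional set (your $Q_0$-eigenvalue is exactly the paper's $p_r(h,\lambda)$ of \eqref{eq:pr}), but the mechanism you build on --- a \emph{triangular} change of spanning sets between the ordered monomials $T_{-A}J_{-C}G^+_{-B}G^-_{-\tilde B}v$ and the free-field monomials $L_{-A}X_{-C}\psi^+_{-B}\psi^-_{-\tilde B}v$ with respect to ``a suitable filtration'' --- does not exist, and this is the crux rather than bookkeeping. Work in the conformal-weight-$\tfrac32$, charge-$(-1)$ subspace spanned by $\psi^-_{-3/2}v,\ L_{-1}\psi^-_{-1/2}v,\ X_{-1}\psi^-_{-1/2}v$ (take $M_h$ a Verma module so these are independent). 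A direct mode computation from \eqref{Gminus} gives
\begin{align*}
G^-_{-3/2}v &= p_1(h,\lambda)\,\psi^-_{-3/2}v \;+\; (\ell+2)\,L_{-1}\psi^-_{-1/2}v \;-\; \tfrac{\ell^2}{8}\lambda\, X_{-1}\psi^-_{-1/2}v,\\
J_{-1}G^-_{-1/2}v &= p_1(h,\lambda)\bigl(\kappa\, X_{-1}\psi^-_{-1/2}v \;-\; \psi^-_{-3/2}v\bigr), \qquad \kappa = -\tfrac{\ell}{2(\ell+2)},
\end{align*}
and only the nonvanishing pattern matters: triangularity of the first line forces $\psi^-_{-3/2}v$ to sit strictly above $X_{-1}\psi^-_{-1/2}v$ in your filtration, while triangularity of the second line forces the opposite. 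So no ordering or filtration can make the transition triangular: the $G^-$-corrections raise $L$- and $X$-degree while lowering the fermion level, and the $J$-corrections raise the fermion level while lowering $X$-degree. You flag exactly this tension at the end of your proposal, but it cannot be ``arranged'' away; the transition matrix is genuinely non-triangular, and proving it invertible under precisely the stated hypothesis on $h$ is the whole content (note also that your diagonal coefficient for a gapped mode like $\psi^-_{-3/2}v$ is $p_1$, not $p_2$, so even identifying the correct nonvanishing conditions from your scheme is unclear).

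The paper avoids this circularity with an idea your proposal is missing: it never applies $G^-$-modes at gapped levels. It applies the \emph{full consecutive staircase} $G^-_{-D}$, $D=\{\tilde b_1,\tilde b_1-1,\dots,\tfrac12\}$, to the highest-weight vector, where every cross term vanishes identically (each would-be lower $\psi^-$-slot is either a positive mode or already occupied, hence killed by $(\psi^-)^2=0$), yielding the exact identity $G^-_{-D}v=\prod_{r}p_r(h,\lambda)\,\psi^-_{-D}v$ of \eqref{eq:pr}; it then deletes the unwanted staircase entries by acting with \emph{positive} modes $G^+_E$ of $G^+=\psi^+$ --- operators that are not in your ordered negative-mode monomial set at all. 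Only after all fermionic monomials $\psi^+_{-B}\psi^-_{-\tilde B}v$ (gapped ones included) are secured in this way do the $J$- and $T$-inductions become honestly triangular, because their correction terms (lower $X$- resp.\ $L$-degree, possibly higher fermion level) then already lie in the established part of $\W^k(\ssl).v$. To repair your argument, either adopt this staircase-plus-annihilation step, or replace the triangularity claim by an actual invertibility proof for the non-triangular transition matrix; as written, the proposal has a genuine gap.
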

\begin{proof}
$M_h \otimes \cF \otimes \pi^X_\lambda$ is spanned by monomials of the form
$L_{-A}X_{-C}\psi^+_{-B} \psi^-_{-\tilde B} v$.
Consider $\tilde B = \{\tilde b_1, \dots, \tilde b_s\}$ as in Notation \ref{notation}. Set $D = \{\tilde b_1, \tilde b_1 -1, \tilde b_1 -2,  \dots, \frac{1}{2} \}$. 
Then from \eqref{Gminus}
\begin{equation} \label{eq:pr}
\begin{split}
G^-_{-D}v &= \prod_{r=1}^{|D|} p_r(h, \lambda) \psi^-_{-D}v \qquad \text{with} \\ \qquad p_r(h, \lambda) &= (\ell + 2) h - \left( r-1+ \frac{\ell}{4} \lambda \right)^2 - (\ell+1)\left(r-1 + \frac{\ell}{4}\lambda\right).
\end{split}
\end{equation}
Let $E := D \setminus\tilde B$ and ordered as before, i.e. $e_1 > e_2 > \dots$. Set $G^+_E = \prod\limits_{i=1}^{|D| - | \tilde B|}G^+_{e_i}$.
Then 
\[
G^+_E G^-_{-D}v =  \epsilon \prod_{r=1}^{|D|} p_r(h, \lambda) \psi^-_{-\tilde B}v
\]
and some $\epsilon \in \{ \pm 1\}$. 
Assume that none of the $p_r(h, \lambda) $ vanishes,  then any element of the form $\psi^+_{-B} \psi^-_{-\tilde B} v$ is in $ \W^k(\ssl).v$. Set $\kappa = -\frac{\ell}{2 (\ell+2)}$.
By \eqref{Gminus}
$\kappa^{|C|} X_{-C}\psi^+_{-B} \psi^-_{-\tilde B} v$ and $J_{-C}\psi^+_{-B} \psi^-_{-\tilde B} v$ coincide up to sums of monomials of $X$-degree strictly less then $|C|$ (and $L$-degree zero). Thus by induction for $|C|$ we see that any element of the form $X_{-C}\psi^+_{-B} \psi^-_{-\tilde B} v$ is in $ \W^k(\ssl).v$. Using \eqref{Gminus} again and a similar induction for $|A|$ implies that any element of the form $L_{-A}X_{-C}\psi^+_{-B} \psi^-_{-\tilde B} v$  is in $ \W^k(\ssl).v$.
\end{proof}
\begin{remark}\label{rem:generic}
The Corollary says in particular that if $M_h = V(c_\ell, h)$ is a Virasoro Verma module then for generic $\lambda$ the module $ V(c_\ell, h) \otimes \cF \otimes \pi^X_\lambda$ is a Verma module for $\W^k(\ssl)$. The conformal weight of the top level of $ V(c_\ell, h) \otimes \cF \otimes \pi^X_\lambda$ is 
\[
\Delta(h, \lambda) = h - \frac{1}{2} - \frac{\ell^2 \lambda^2 }{16(\ell+2)} - \frac{\ell \lambda}{4}
\]
and the $J_0$-weight is 
\[
\mu(\lambda) = -\frac{\ell \lambda}{2 (\ell+2)} -1.
\]
Both these quantities are read off from \eqref{Gminus}. 
Hence for generic $\lambda$
\[
V(c_\ell, h) \otimes \cF \otimes \pi^X_\lambda \cong \mathcal V_{\Delta(h, \lambda), \mu(\lambda)}.
\]
Set 
\[
\lambda^\tau := -\lambda - 4\frac{(\ell+2)}{\ell}.
\]
Then we compute that
$\Delta(h, \lambda^\tau) = \Delta(h, \lambda)$ and  $\mu(\lambda^\tau) = - \mu(\lambda)$.
Hence for generic $\lambda^\tau$ also 
\[
 \mathcal V_{\Delta(h, \lambda), \mu(\lambda)}^\tau \cong  \mathcal V_{\Delta(h, \lambda), -\mu(\lambda)} \cong  V(c_\ell, h)  \otimes \cF \otimes \pi^X_{\lambda^\tau}.
\]
\end{remark}

\begin{proposition}\label{prop;emb}
Let $\ell$ be admissible and non-integral, $k$ defined by $(\ell+2)(k+1)=1$ and let $N$ be a simple object in  $\ssWtsl{k}$, then there exists a simple $\vir_\ell \otimes \cF \otimes \pi^X$-module $\widetilde N$, such that its simple quotient as an object in  $\ssWtsl{k}$ is isomorphic to $N$.
\end{proposition}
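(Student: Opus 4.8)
The plan is to lift $N$ to the free field algebra by combining the Kazama--Suzuki correspondence of \cite{CGNS} with Adamovic's realization of $L_\ell(\sltwo)$-modules (Proposition \ref{prop:ada}). First I would apply the consequence of \cite{CGNS} recorded above: since $N$ is simple in $\ssWtsl{k}$, there is a simple module $M$ in $\sWtsl{\ell}$ and a Fock module $\pi^-_\lambda$ with $N \cong H^0(M \otimes \cF \otimes \pi^-_\lambda)$. Next, Proposition \ref{prop:ada} furnishes a simple $\vir_\ell \otimes \Pi(0)$-module $\widetilde M \cong \Pi_m(\mu) \otimes \vir^\ell_{r,s}$ whose simple quotient, viewed as an $L_\ell(\sltwo)$-module through the conformal embedding $L_\ell(\sltwo) \hookrightarrow \vir_\ell \otimes \Pi(0)$, is isomorphic to $M$. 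Note that $\widetilde M$ itself lies in $\sWtsl{\ell}$: the zero mode $c_0$ acts as a scalar on $\Pi_m(\mu)$, so $h_0 = \tfrac{\ell}{2}c_0 + d_0$ acts semisimply, and $\widetilde M$ has finite length as an $L_\ell(\sltwo)$-module.

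The candidate is $\widetilde N := H^0(\widetilde M \otimes \cF \otimes \pi^-_\lambda)$, where the cohomology is taken with respect to $\pi^Z$ inside $\vir_\ell \otimes \Pi(0) \otimes \cF$. The $\vir_\ell$ factor is inert, while the $\Pi_m(\mu) \otimes \cF \otimes \pi^-_\lambda$ part is resolved by the module analogue of the Fock decomposition
\[
\Pi(0) \otimes \cF \cong \bigoplus_{p \in 2\mathbb Z,\, q \in \mathbb Z} \pi^Z_{p+2q} \otimes \pi^Y_q \otimes \pi^X_{-\frac{4}{\ell}q-\frac{2}{\ell}p}
\]
together with the vanishing $H^p(\pi^+_\lambda \otimes \pi^-_\mu) = \delta_{p,0}\,\delta_{\lambda,-\mu}\,\mathbb C$, which isolates a single $\pi^Z$-weight. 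This identifies $\widetilde N \cong \vir^\ell_{r,s} \otimes \cF \otimes \pi^X_\nu$ for a suitable $\nu$. Being the outer tensor product of the simple $\vir_\ell$-module $\vir^\ell_{r,s}$, the simple $\cF$-module $\cF$, and the simple $\pi^X$-Fock module $\pi^X_\nu$, this is a simple $\vir_\ell \otimes \cF \otimes \pi^X$-module.

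It remains to check that restricting $\widetilde N$ along $\W_k(\ssl) \hookrightarrow \vir_\ell \otimes \cF \otimes \pi^X$ recovers $N$ as simple quotient. The point is that this embedding is precisely the one induced by $L_\ell(\sltwo) \hookrightarrow \vir_\ell \otimes \Pi(0)$ on commutants, since the commutant of $\pi^Z$ in $\vir_\ell \otimes \Pi(0) \otimes \cF$ is $\vir_\ell \otimes \cF \otimes \pi^X$; hence the restricted $\W_k(\ssl)$-structure on $\widetilde N$ agrees with that of $H^0(\widetilde M \otimes \cF \otimes \pi^-_\lambda)$ computed abstractly from the $L_\ell(\sltwo)$-module $\widetilde M$. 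As $H^0(\,\cdot\,\otimes \cF \otimes \pi^-_\lambda)$ restricts to an equivalence on each block, it preserves cosocles, so the surjection $\widetilde M \twoheadrightarrow M$ is sent to a surjection $\widetilde N \twoheadrightarrow N$ exhibiting $N$ as the simple quotient of $\widetilde N$ in $\ssWtsl{k}$. The main obstacle is this middle identification: verifying that the semi-infinite cohomology of the free field module is exactly $\vir^\ell_{r,s} \otimes \cF \otimes \pi^X_\nu$ and that its restricted $\W_k(\ssl)$-action coincides with the abstract correspondence of \cite{CGNS}, i.e. reconciling the two descriptions of the coset from Section \ref{ff}.
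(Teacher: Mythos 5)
Your proposal is correct and follows essentially the same route as the paper: apply the correspondence of \cite{CGNS} to write $N \cong H^0(M \otimes \cF \otimes \pi^-_\lambda)$, lift $M$ to a simple $\vir_\ell \otimes \Pi(0)$-module $\widetilde M$ via Proposition \ref{prop:ada}, and set $\widetilde N = H^0(\widetilde M \otimes \cF \otimes \pi^-_\lambda)$, using the block-wise equivalence to transport the surjection $\widetilde M \twoheadrightarrow M$. The only cosmetic difference is that you establish simplicity of $\widetilde N$ by explicitly computing the semi-infinite cohomology via the Fock decomposition of $\Pi(0) \otimes \cF$, whereas the paper simply notes that the theory of \cite{CGNS} applies equally to the pair $\vir_\ell \otimes \Pi(0)$ and $\vir_\ell \otimes \cF \otimes \pi^X$, so simples go to simples.
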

\begin{proof}
Let $N$ be a simple object in   $\ssWtsl{k}$. As discussed at the end of the previous section and thanks to \cite{CGNS} there exists a pair $(M, \pi^-_\lambda)$ with $M$ a simple $L_k(\sltwo)$-module, such that
$N \cong H^0(M \otimes \cF \otimes \pi^-_\lambda)$. 
Let $\widetilde M$ be a simple  $\vir^\ell \otimes \Pi(0)$-module whose simple quotient as an object in $\sWtsl{\ell}$ is isomorphic to $M$, see Proposition \ref{prop:ada}.
Since $H^0( \, \bullet\,  \otimes \cF \otimes \pi^-_\lambda)$ gives a block-wise equivalence of categories the module $\widetilde N = H^0(\widetilde M \otimes \cF \otimes \pi^-_\lambda)$ is a $\vir_\ell \otimes \cF \otimes \pi^X$-module whose simple quotient as an object in  $\ssWtsl{k}$ is $N$. 
$\widetilde N$ is simple as a $\vir_\ell \otimes \cF \otimes \pi^X$-module since the theory of \cite{CGNS} also works for the pair $\vir_\ell \otimes \Pi(0)$ and $\vir_\ell \otimes \cF \otimes \pi^X$.
\end{proof}

\section{$C_1$-cofiniteness in $\ssWtsl{k}$}\label{C1}

The aim of this section is to prove the statement:
Let $\ell$ be an admissible level for $\sltwo$ and $k$ be defined via $(\ell+2)(k+1) =1$. Then every object of  $\ssWtsl{k}$ is $C_1$-cofinite.

We start by introducting $C_1$-cofiniteness and some relevant results.

\subsection{$C_1$-cofiniteness}

We review some properties around $C_1$-cofiniteness. 
The definition is taken from \cite{C-Vir}
\begin{definition}
Let $V$ be a vertex operator superalgebra and let $W$ be a weak $V$-module. Let $C_1(W)$ be the subspace of $W$ spanned by elements of the form $u_{-1}w$, where $u\in V_+:=\coprod_{n\in \ZZ_{>0}} V_{(n)}$ and $w\in W$. We say that $W$ is \textit{$C_1$-cofinite} if the \textit{$C_1$-quotient} $W/C_1(W)$ is finite-dimensional.
\end{definition}
A \textit{generalized $V$-module} is a weak $V$-module $(W, Y_{W})$ with a
$\CC$-grading
\begin{equation} \label{eq:Vmodgrading}
	W=\coprod_{n\in \CC}W_{[n]}
\end{equation}
such that $W_{[n]}$ is a
generalized eigenspace for the operator $L_{0}$ with eigenvalue $n$. Yi-Zhi Huang proved the next Lemma for vertex operator algebras, but the proof for vertex operator superalgebras is the same.
\begin{lemma}[{\cite[Lemma 2.10 and 2.11]{H09}}] \label{c1cofinite}
Let $V$ be a vertex operator superalgebra.
\begin{enumerate}
	\item For a (generalized) $V$-module $W$ and a (generalized) $V$-submodule $U$ of $W$, \label{qu}
	\begin{equation}
		C_1(W/U)=\frac{C_1(W)+U}{U} \quad \text{and so} \quad \frac{W/U}{C_1(W/U)}\cong\frac{W}{C_1(W)+U}.
	\end{equation}
	It follows that $W/U$ is $C_1$-cofinite if $W$ is.
	\item If $W$ is a finite-length $V$-module with $C_1$-cofinite 
	composition factors, then $W$ is also $C_1$-cofinite. \label{inclusion}
\end{enumerate}
\end{lemma}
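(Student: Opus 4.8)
The plan is to prove the two parts in order, with part (1) supplying the computational identity that drives part (2).

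For part (1), the essential point is that the module structure on $W/U$ is the induced one: writing $\pi \colon W \to W/U$ for the projection, one has $u_{-1}\,\pi(w) = \pi(u_{-1}w)$ for all $u \in V$ and $w \in W$. Consequently the spanning set $\{u_{-1}\,\pi(w) : u \in V_+,\ w \in W\}$ of $C_1(W/U)$ is exactly $\pi\big(\{u_{-1}w : u\in V_+,\ w\in W\}\big)$, and since $\pi$ is linear the span of the image is the image of the span, so $C_1(W/U) = \pi(C_1(W))$. Because $\pi(A) = (A+U)/U$ for any subspace $A \subseteq W$, taking $A = C_1(W)$ yields the first identity. The second identity is then the third isomorphism theorem applied to the chain $U \subseteq C_1(W)+U \subseteq W$:
\[
\frac{W/U}{C_1(W/U)} = \frac{W/U}{(C_1(W)+U)/U} \cong \frac{W}{C_1(W)+U}.
\]
Since $C_1(W) \subseteq C_1(W)+U$, the right-hand side is a quotient of $W/C_1(W)$; hence if $W$ is $C_1$-cofinite it is finite-dimensional, so $W/U$ is $C_1$-cofinite.

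For part (2) I would first isolate the extension step: if $0 \to U \to W \to W/U \to 0$ is exact with both $U$ and $W/U$ being $C_1$-cofinite, then so is $W$. Here I use the additivity of dimension along the chain $C_1(W) \subseteq C_1(W)+U \subseteq W$,
\[
\dim \frac{W}{C_1(W)} = \dim \frac{C_1(W)+U}{C_1(W)} + \dim \frac{W}{C_1(W)+U}.
\]
The second summand equals $\dim\big((W/U)/C_1(W/U)\big)$ by part (1), which is finite because $W/U$ is $C_1$-cofinite. For the first summand, the second isomorphism theorem gives $(C_1(W)+U)/C_1(W) \cong U/(U \cap C_1(W))$, and the crucial containment is $C_1(U) \subseteq U \cap C_1(W)$: any generator $u_{-1}w$ with $u \in V_+$ and $w \in U$ lies in $U$ because $U$ is a submodule, and simultaneously lies in $C_1(W)$ by definition since $w \in W$. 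Thus $U/(U \cap C_1(W))$ is a quotient of $U/C_1(U)$, which is finite-dimensional because $U$ is $C_1$-cofinite. Both summands being finite, $W$ is $C_1$-cofinite.

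Finally, I would obtain part (2) in full by induction on the length of $W$. Fixing a composition series $0 = W_0 \subset W_1 \subset \cdots \subset W_n = W$, the base case $n \le 1$ is exactly the hypothesis that composition factors are $C_1$-cofinite. For the inductive step, $W_{n-1}$ is $C_1$-cofinite by the inductive hypothesis and $W/W_{n-1}$ is a composition factor, hence $C_1$-cofinite; applying the extension step to $0 \to W_{n-1} \to W \to W/W_{n-1} \to 0$ shows $W$ is $C_1$-cofinite. The only genuinely non-formal ingredient is the containment $C_1(U) \subseteq U \cap C_1(W)$ in the extension step, everything else being bookkeeping with the isomorphism theorems; so that is where I would concentrate, verifying in particular that the super/parity grading and the generalized (i.e.\ $L_0$-nilpotent) structure enter nowhere beyond the span defining $C_1$.
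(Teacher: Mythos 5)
Your proof is correct and follows essentially the same route as the paper's source for this lemma (Huang \cite[Lemmas 2.10, 2.11]{H09}): the quotient identity $C_1(W/U)=\pi(C_1(W))=(C_1(W)+U)/U$ via the induced module structure and the isomorphism theorems, the extension step via the containment $C_1(U)\subseteq U\cap C_1(W)$, and induction on a composition series. Your closing observation that neither the parity grading nor the generalized $L_0$-structure enters the argument is precisely the point the paper makes in asserting that Huang's proof carries over verbatim to vertex operator superalgebras.
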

An important property of $C_1$-cofinite modules is that they are closed under fusion.
\begin{theorem}[\cite{Mi}]\label{miyamoto}
Let $W_1, W_2$ be $C_1$-cofinite. If $W_3$ is a lower-bounded generalized module such that there exists a surjective intertwining operator of type $\binom{W_3}{W_1\,W_2}$, then $W_3$ is also $C_1$-cofinite. In particular, the $P(z)$-tensor product $W_1 \boxtimes_{P(z)} W_2$ is $C_1$-cofinite.
\end{theorem}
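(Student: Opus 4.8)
The plan is to prove that the $C_1$-quotient $W_3/C_1(W_3)$ is finite-dimensional, using the surjectivity of $\mathcal{Y}$ to write every vector of $W_3$ as a finite linear combination of coefficients $(w_1)^{\mathcal{Y}}_{n,k}w_2$ and then reducing, modulo $C_1(W_3)$, both the arguments $w_1,w_2$ and the mode index $n$ to finite ranges. Two tools drive the argument. The first is the pair of \emph{commutator} and \emph{iterate} formulas for $\mathcal{Y}$ obtained by taking residues in the Jacobi identity of Definition~\ref{log:def}(ii); for homogeneous $v\in V$ these read, schematically,
\[
v_m\,\mathcal{Y}(w_1,x)w_2-\mathcal{Y}(w_1,x)\,v_m w_2=\sum_{i\ge0}\binom{m}{i}\,\mathcal{Y}(v_i w_1,x)\,w_2\,x^{m-i},
\]
together with a dual formula expressing $\mathcal{Y}(v_m w_1,x)w_2$ through $v_j$ acting on $W_3$ (via $Y_3$), $v_j$ acting on $W_2$ (via $Y_2$), and $\mathcal{Y}(v_i w_1,x)w_2$. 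The second is the elementary observation that for $u\in V_+$ and every $n\ge1$ one has $u_{-n}=\tfrac{1}{(n-1)!}(L_{-1}^{\,n-1}u)_{-1}$ with $L_{-1}^{\,n-1}u\in V_+$, so that $u_{-n}$ maps any element of $W_3$ into $C_1(W_3)$. The super case of \cite{Mi} requires only the standard Koszul-sign bookkeeping in these formulas.

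First I would fix finitely many homogeneous vectors $s_1^{(1)},\dots,s_1^{(p)}\in W_1$ and $s_2^{(1)},\dots,s_2^{(q)}\in W_2$ whose images span the finite-dimensional quotients $W_1/C_1(W_1)$ and $W_2/C_1(W_2)$. The reduction lemma to establish is that, coefficient-wise in $x$ and modulo $C_1(W_3)$, one may replace either argument of $\mathcal{Y}$ by such a representative. Applying the iterate formula to $\mathcal{Y}(u_{-1}w_1,x)w_2$ with $u\in V_+$, the terms in which $u$ acts on $W_3$ carry a mode $u_j$ with $j\le-1$ and hence lie in $C_1(W_3)$, while the surviving terms have the shape $\mathcal{Y}(w_1,x)(u_{-1-i}w_2)$ with $u_{-1-i}w_2\in C_1(W_2)$; symmetrically, the commutator formula rewrites $\mathcal{Y}(w_1,x)(u_{-1}w_2)$ as $u_{-1}\mathcal{Y}(w_1,x)w_2\in C_1(W_3)$ plus terms $\mathcal{Y}(u_i w_1,x)w_2$ with $i\ge0$. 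Iterating these two moves, I would obtain $W_3=C_1(W_3)+\Span\{\,(s_1^{(a)})^{\mathcal{Y}}_{n,k}s_2^{(b)}\,\}$ with $a,b$ in the finite index sets and $(n,k)$ ranging over all admissible pairs.

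It remains to cut down the mode range. For fixed homogeneous representatives $s_1=s_1^{(a)}$, $s_2=s_2^{(b)}$, the coefficient $(s_1)^{\mathcal{Y}}_{n,k}s_2$ is homogeneous of conformal weight $\wt s_1+\wt s_2-n-1$, while the logarithmic index $k$ runs only over the finite set $\{0,\dots,K\}$ fixed by $\mathcal{Y}$; the lower-truncation condition bounds $n$ from above. To bound $n$ from below I would again invoke the iterate formula, now with a fixed finite generating set of $V$ lying in $V_+$ (the conformal vector $\omega$ suffices to generate growth in weight): for $n$ sufficiently negative the corresponding coefficients are expressible, modulo $C_1(W_3)$, through coefficients of controlled mode index together with the $C_1(W_2)$-contributions already absorbed above. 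Because $W_3$ is lower-bounded this descending procedure terminates, leaving only finitely many pairs $(n,k)$ and hence a finite spanning set of $W_3/C_1(W_3)$.

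The main obstacle is precisely this last step. The troublesome point is that in the commutator reduction the vectors $u_i w_1$ with $i\ge0$ and $\wt u\ge2$ can have \emph{larger} $W_1$-weight than $w_1$, so a naive induction on weight is not well-founded; the resolution is to organize the commutator/iterate reductions with respect to a finer filtration (for instance a lexicographic measure combining conformal weight with the PBW-length of the $V_+$-monomials used to generate $w_1,w_2$ from the chosen representatives), exploiting that $V$ is finitely generated so that $V_+$-actions are controlled, and then to verify that after passing to a single conformal-weight component of $W_3$ and reducing modulo $C_1(W_3)$ the infinite formal sums of the Jacobi identity contribute only finitely many surviving terms. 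Once the main statement holds, the \emph{in particular} clause is immediate: the canonical intertwining operator $\boxtimes_{P(z)}$ of type $\binom{W_1\boxtimes_{P(z)}W_2}{W_1\,W_2}$ is surjective by construction of the $P(z)$-tensor product and its target is lower-bounded, so taking $W_3=W_1\boxtimes_{P(z)}W_2$ and $\mathcal{Y}=\boxtimes_{P(z)}$ yields $C_1$-cofiniteness of $W_1\boxtimes_{P(z)}W_2$.
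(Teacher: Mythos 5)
Note first that the paper does not actually reprove this statement: its ``proof'' is the citation to Section 2 of \cite{Mi} plus the observation that Miyamoto's argument goes through verbatim for vertex operator superalgebras (your closing remark about Koszul signs matches this). So your proposal has to be measured against Miyamoto's argument, whose architecture you reproduce correctly at the outset: reduce both arguments of $\mathcal{Y}$ modulo $C_1(W_1)$ and $C_1(W_2)$ via the commutator and iterate formulas, use $u_{-n}=\tfrac{1}{(n-1)!}(L_{-1}^{n-1}u)_{-1}$ to dispose of negative $V_+$-modes acting on $W_3$, and exploit the finite range $0\le k\le K$ of logarithm powers. However, you misquote the iterate formula: for $u\in V_+$ one has, modulo $C_1(W_3)$,
\[
\mathcal{Y}(u_{-1}w_1,x)w_2 \equiv \pm\sum_{i\ge0}x^{-1-i}\,\mathcal{Y}(w_1,x)\,u_iw_2,
\]
with modes $u_i$, $i\ge 0$, on the second argument --- not $u_{-1-i}w_2\in C_1(W_2)$ --- so these terms cannot simply be ``absorbed'' as you claim. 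Ironically, the corrected formula dissolves your self-declared ``main obstacle'': under either move the pair of arguments is replaced by pairs whose total weight $\wt w_1+\wt w_2$ drops by at least $1$ (the iterate move trades $(u_{-1}w_1',w_2)$ for $(w_1',u_iw_2)$, the commutator move trades $(w_1,u_{-1}w_2')$ for $(u_iw_1,w_2')$, each losing $i+1$), so induction on the \emph{total} weight is well-founded by lower-boundedness of $W_1$ and $W_2$. No lexicographic/PBW measure is needed --- and the one you propose is in any case never shown to decrease, so as written that step is an assertion, not an argument.

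The genuine gap is the step you yourself flag as hard: bounding the mode index $n$ from below, i.e.\ showing that coefficients $(s_1)^{\mathcal{Y}}_{n,k}s_2$ of sufficiently large conformal weight lie in $C_1(W_3)$. Your justification --- ``because $W_3$ is lower-bounded this descending procedure terminates'' --- points in the wrong direction: as $n\to-\infty$ these coefficients have weight $\wt s_1+\wt s_2-n-1\to+\infty$, and lower-boundedness of $W_3$ constrains small weights, not large ones (the truncation condition only bounds $n$ from above). The mechanism that actually closes the induction in the cited proof, and which is absent from your sketch, is that $\omega\in V_+$, hence $L_{-2}W_3=\omega_{-1}W_3\subseteq C_1(W_3)$; the commutator formula for $\omega$ with $m=-1$ then gives, modulo $C_1(W_3)$ and modulo the term $\mathcal{Y}(w_1,x)L_{-2}w_2$ with $L_{-2}w_2\in C_1(W_2)$,
\[
0 \equiv x^{-1}\,\frac{d}{dx}\,\mathcal{Y}(w_1,x)w_2+\sum_{i\ge1}(-1)^{i}x^{-1-i}\,\mathcal{Y}(L_{i-1}w_1,x)w_2,
\]
where the first term arises from the $L_{-1}$-derivative property $\mathcal{Y}(L_{-1}w_1,x)=\tfrac{d}{dx}\mathcal{Y}(w_1,x)$. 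Extracting the coefficient of $x^{-n-1}(\log x)^k$ expresses a scalar multiple, nonvanishing for $|n|$ large, of the deep-mode coefficient $(w_1)^{\mathcal{Y}}_{n-2,k}w_2$ through coefficients of higher log-index (handled by downward induction on $k\le K$), first arguments $L_{i-1}w_1$ with $i\ge2$ of strictly smaller weight, and second arguments in $C_1(W_2)$. You never invoke the $L_{-1}$-derivative property at this point, so the descent in $n$ --- the entire nontrivial content of the theorem beyond formal rearrangement --- is missing. The ``in particular'' clause is fine, granting the standard HLZ fact that $W_1\boxtimes_{P(z)}W_2$ is spanned by the homogeneous components of the elements $w_1\boxtimes_{P(z)}w_2$.
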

Miyamoto proved this Theorem for vertex operator algebras in section 2 of \cite{Mi} and we inspect that the proof of the same statement for vertex operator superalgebras is exactly the same.
For the Virasoro algebra $C_1$-cofinite modules are:  
\begin{proposition}\label{noc1} \textup{\cite[Cor. 2.2.7]{C-Vir}}
\leavevmode
\begin{enumerate}
	\item A \hw{} $M(c,0)$-module is $C_1$-cofinite if and only if it is not isomorphic to a Verma module.  In particular, $L(c, h)$ is $C_1$-cofinite if and only if $h \in H_c$. \label{it:c1notverma}
	\item A $C_1$-cofinite \hw{} $M(c,0)$-module
	has finite length. \label{hig}
	\item The composition factors of a $C_1$-cofinite \hw{} $M(c,0)$-module are of the form $L(c,h)$ with $h \in H_c$. \label{it:c1compfacts}
\end{enumerate}
\end{proposition}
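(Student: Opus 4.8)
The plan is to reduce everything to the $C_1$-quotient of a highest-weight module $W=V(c,h)/J$ with cyclic generator $v$ of weight $h$. First I would identify $C_1(W)$ explicitly. Since the positive part $V_+$ of the Virasoro vertex algebra is spanned, as a vertex algebra, by $\omega=L_{-2}\one$ and its descendants (with $\omega_{-1}=L_{-2}$, and $L_{-1}\one=0$), a routine mode computation gives $C_1(W)=\sum_{n\ge 2}L_{-n}W$. The commutator $[L_{-1},L_{-n}]=(n-1)L_{-(n+1)}$ then shows $L_{-1}C_1(W)\subseteq C_1(W)$. Consequently $W/C_1(W)$ is the image of the cyclic $\mathbb C[L_{-1}]$-module on $v$, spanned by the classes of $L_{-1}^{j}v$ ($j\ge 0$), one in each conformal weight $h+j$; and because $C_1(W)$ is $L_{-1}$-stable, the set of $j$ with $\overline{L_{-1}^{j}v}\neq 0$ is a down-closed interval $\{0,\dots,N_0-1\}$. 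Hence $W$ is $C_1$-cofinite if and only if $L_{-1}^{N}v\in C_1(W)$ for some $N$, in which case $\dim W/C_1(W)=N_0\le N$.

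For part (1) I would then split into two cases. If $W=V(c,h)$ is the Verma module, the PBW basis $\{L_{-I}v\}$ shows that in weight $h+j$ the space $C_1(V(c,h))$ is exactly the span of those $L_{-I}v$ whose partition $I$ has a part $\ge 2$, so $L_{-1}^{j}v\notin C_1$ for every $j$ and $N_0=\infty$: a Verma is never $C_1$-cofinite. If $J\neq 0$, then $J$ contains a singular vector $s$ of some weight $h+N$; by Proposition~\ref{sing} we may normalise so that $s=L_{-1}^{N}v+\sum_{I\neq\{1^{N}\}}a_I L_{-I}v$, where every term other than $L_{-1}^{N}v$ lies in $C_1(V(c,h))$. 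Thus $L_{-1}^{N}v\equiv s\equiv 0$ modulo $C_1(V(c,h))+J$, which by Lemma~\ref{c1cofinite}(1) equals $C_1(W)$; so $W$ is $C_1$-cofinite. This gives the equivalence in (1), and the assertion for $L(c,h)$ follows since $L(c,h)=V(c,h)$ exactly when $V(c,h)$ has no singular vector, i.e. (Proposition~\ref{thm:vermared}) exactly when $h\notin H_c$.

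For part (2) the reduction is that, by part (1), a $C_1$-cofinite highest-weight module is a \emph{proper} quotient $V(c,h)/J$ with $J\neq 0$, so it suffices to prove that every proper quotient of a Virasoro Verma module has finite length. Here I would invoke the Feigin--Fuchs description of the submodule lattice of $V(c,h)$: nonzero homomorphisms between Verma modules are injective, the maximal submodule is generated by at most two singular vectors, and the embedding diagram is either a finite chain, a single infinite chain, or the infinite ``braid'' occurring at minimal-model weights. In each case any nonzero $J$ already contains all but finitely many composition factors of $V(c,h)$ — concretely $J$ contains a submodule $\cong V(c,h+N)$, and the ``waist'' of the diagram below it absorbs the entire infinite tail — so $V(c,h)/J$ retains only finitely many factors. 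I expect this structural input to be the main obstacle: finiteness of length fails for the Verma itself and is rescued only by the detailed geometry of singular vectors, not by any soft argument (for instance the naive bound $\mathrm{length}(W)\le\dim W/C_1(W)$ is already false for $\vir_k$ at minimal-model $c$).

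Finally, part (3) follows once (2) is available: $W$ has finitely many composition factors $L(c,h+n_i)$. The top factor is $L(c,h)$, and since $W$ is a proper quotient its Verma carries a singular vector, so $h=h_{r,s}(t)\in H_c$ by Proposition~\ref{thm:vermared}. For the deeper factors I would use that each arises as the top of an embedded Verma module $V(c,h')\hookrightarrow V(c,h)$, whose generating singular vector sits at a weight $h'$ again of Kac form $h_{r',s'}(t)$; hence every composition factor is $L(c,h')$ with $h'\in H_c$.
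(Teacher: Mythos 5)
The paper itself offers no proof of this proposition---it is quoted verbatim from \cite[Cor.~2.2.7]{C-Vir}---so the relevant comparison is with that reference. Your reconstruction follows essentially the same route as the proof given there: identify $C_1(W)=\sum_{n\ge 2}L_{-n}W$ for a highest-weight module $W$, so that $W/C_1(W)$ is spanned by the images of the $L_{-1}^{j}v$ and $C_1$-cofiniteness is equivalent to $L_{-1}^{N}v\in C_1(W)$ for some $N$; then use the Astashkevich normalisation of singular vectors (Proposition \ref{sing}) together with Lemma \ref{c1cofinite}(1) to settle part (1), and the Feigin--Fuchs description of the submodule lattice of Verma modules to deduce parts (2) and (3) from the fact that any nonzero submodule contains an embedded Verma whose quotient has finitely many composition factors. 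This is all correct, including your (accurate) warning that finite length cannot be obtained softly from a dimension bound on the $C_1$-quotient---though your counterexample should be the universal algebra $\vir^k=M(c,0)$ at minimal-model $c$ (length $2$, one-dimensional $C_1$-quotient), not its simple quotient $\vir_k$, which has length $1$.

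One small imprecision in part (3): the claim that every embedded Verma $V(c,h')\hookrightarrow V(c,h)$ has $h'$ again of Kac form fails when $t$ is irrational, where $V(c,h_{r,s})\supset V(c,h_{r,s}+rs)$ and $h_{r,s}+rs=h_{r,-s}(t)\notin H_c$. The conclusion survives because in that case $V(c,h_{r,s}+rs)$ is exactly the maximal proper submodule, so the only $C_1$-cofinite highest-weight quotient is $L(c,h_{r,s})$ and the offending factor is killed; for rational $t=u/v$ one has $h_{r,-s}(u/v)=h_{r+mu,\,mv-s}(u/v)$ for suitable $m\ge 1$, so there all singular-vector weights do lie in $H_c$. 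With this one-line case distinction your argument is complete and coincides with the cited proof.
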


\subsection{Li's standard filtration}

Li introduced two filtrations on vertex algebras and their modules \cite{LiCofin}. We need the standard filtration, which is an increasing filtration.
Let $V$ be a vertex operator superalgebra and fix a conformal structure on $V$ and fix a set $\{ X^i | i \in I \}$ of homogeneous strong generators of $V$ and let $\Delta_i$ be the conformal weight of $X^i$. 
Assume that the conformal weight grading satisfies
\[
V = \bigoplus_{n \in \frac{1}{d} \ZZ_{\geq 0}} V_n
\]
for some positive integer $d$  with  $V_0 = \CC \vak$. 
Let 
\[
X^i(z)  := Y(X^i, z) =  \sum_{n \in \mathbb Z} X_n^i z^{-n-1}.
\]
Then one introduces the subspace $F_pV$ spanned by vectors of the form
\[
X^{i_1}_{-n_1} \dots X^{i_r}_{-n_r} \vak
\]
for $r\geq 0$ and $n_j \geq 1$ and satisfying $\Delta_{i_1} + \dots + \Delta_{i_r} \leq p$.
The subspaces satisfy
\begin{enumerate}
\item $F_p V \subset F_q V$ for $p <q$
\item $V = \bigcup_p F_pV$.
\item $X_n F_pV \subset  F_{p+q}V$ for $X \in F_pV$ and $n \in \ZZ$
 \item $X_n F_pV \subset  F_{p+q-1}V$ for $X \in F_pV$ and $n \in \ZZ_{\geq 0}$
\end{enumerate}
Note that the filtration does not depend on the choice of strong generating set and it is stable under $L_0$.
This filtration induces a filtration on the algebra of modes, $\mathcal A$,  \cite[Section 3.13]{Ara1}.

Let $M$ be a lower-bounded $V$-module, then the corresponding Li-filtration on $M$ is defined analogously, that is $F_p M$ is spanned by 
\[
X^{i_1}_{-n_1} \dots X^{i_r}_{-n_r}  m 
\]
for $r\geq 0$ and $n_j \geq 1$ and satisfying $\Delta_{i_1} + \dots + \Delta_{i_r} \leq p$ and $m$ a top level vector of $M$. It satisfies 
\begin{enumerate}
\item $F_p M \subset F_q M$ for $p <q$
\item $M = \bigcup_p F_pM$.
\item $X_n F_pM \subset  F_{p+q}M$ for $X \in F_pV$ and $n \in \ZZ$
 \item $X_n F_pM \subset  F_{p+q-1}M$ for $X \in F_pV$ and $n \in \ZZ_{\geq 0}$
\end{enumerate}
The last statement follows since if $X_n, Y_m$ have conformal weights $\Delta_X, \Delta_Y$, then $[X_n, Y_m]$ is a linear combination of elements in $\mathcal A$ of conformal weight at most $\Delta_X + \Delta_Y -1$. 

\begin{example}\label{ex:filtration}
Let $W= \vir^{\ell} \otimes \cF \otimes \pi^X$ then the Virasoro field $L$ has degree two, the Heisenberg field degree one and the two fermions $b$ and $c$ have degree $1/2$.  We will denote this filtration in the following by $F$.

Let $M$ be a $W$-module and let $G$ be Li's standard filtration on $M$ viewed as an  $\W^k(\ssl)$-module with an unusual choice of conformal structure.
In particular $L$ has degree two, $G^+ = b$ has degree $1/2$ and $G^-$ has degree $5/2$ and $J$ has degree $1$.
This is the filtration that is compatible with $F$ in the sense that these generators have the same degree with respect to $F$. In particular this means that 
\[
G_p(M) \subset F_p(M).
\]

Another filtration that we will use is the one where we exchange the degrees of $G^+$ and $G^-$, i.e. we give $G^+$ degree $5/2$ and $G^-$ degree $1/2$. This filtration will be called $\tilde G$. 
\end{example}

\subsection{$C_1$-cofiniteness in $\ssWtsl{k}$}

As before let $\ell$ and $k$ be related via $(\ell+2)(k+1) =1$ and let them be non-critical, i.e. $\ell \neq -2$ and $k \neq  -1$.

\begin{theorem}\label{keythm}
Let $V(c_\ell, h)$ be a Verma module for $\vir^\ell$ and let $L(c_\ell, h)$ be its simple quotient. Let $\ell \notin \mathbb Z_{\geq 0}$. 
$L(c_\ell, h)$ is $C_1$-cofinite as a module for $\vir ^{\ell}$ if and only if for all $\lambda \in \mathbb C$ the module
$Q = L(c_\ell, h) \otimes \cF \otimes \pi^X_\lambda$ is $C_1$-cofinite as a $\W^k(\ssl)$-module. 
\end{theorem}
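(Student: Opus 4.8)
The plan is to factor the equivalence through the large vertex operator superalgebra $W := \vir^\ell \otimes \cF \otimes \pi^X$, which contains $\W^k(\ssl)$ by the free field realization of Section~\ref{ff}, and then to compare $C_1$-cofiniteness over $W$ with $C_1$-cofiniteness over the subalgebra $\W^k(\ssl)$. First I would record the tensor-product computation: for vertex operator superalgebras $V_1\otimes V_2$ and modules $M_1,M_2$ one has $C_1(M_1\otimes M_2)=C_1(M_1)\otimes M_2+M_1\otimes C_1(M_2)$, so $(M_1\otimes M_2)/C_1\cong (M_1/C_1)\otimes(M_2/C_1)$. The inclusion $\supseteq$ is immediate, and $\subseteq$ follows by expanding the $(-1)$-mode of a mixed generator $u\otimes u'$ and noting that in each summand one tensor leg is hit by a negative mode of a positive-weight vector, which lies in the relevant $C_1$ after rewriting $a_{-n}=\mathrm{const}\cdot(\partial^{\,n-1}a)_{-1}$. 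Since the fermion module $\cF$ and the rank-one Heisenberg Fock module $\pi^X_\lambda$ are $C_1$-cofinite with nonzero $C_1$-quotient, this yields for every $\lambda$ the equivalence
\[
Q = L(c_\ell,h)\otimes\cF\otimes\pi^X_\lambda \ \text{ is }C_1\text{-cofinite over } W \iff L(c_\ell,h)\ \text{is }C_1\text{-cofinite over } \vir^\ell,
\]
which I will refer to as $(\star)$.

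The easy direction of the theorem follows at once: since $\W^k(\ssl)\subseteq W$ we have $C_1^{\W^k(\ssl)}(Q)\subseteq C_1^{W}(Q)$, so $C_1$-cofiniteness of $Q$ over $\W^k(\ssl)$ for even a single $\lambda$ forces $C_1$-cofiniteness over $W$, and $(\star)$ then gives that $L(c_\ell,h)$ is $C_1$-cofinite over $\vir^\ell$.

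The content is the converse: assuming $L(c_\ell,h)$ is $C_1$-cofinite over $\vir^\ell$—equivalently, by $(\star)$, that $Q$ is $C_1$-cofinite over $W$—I must show $Q$ is $C_1$-cofinite over $\W^k(\ssl)$ for all $\lambda$. Here I would use Li's standard filtrations of Example~\ref{ex:filtration}: the filtration $F$ attached to the generators $L,\psi^\pm,X$ of $W$, and the filtrations $G,\tilde G$ attached to the generators $T,J,G^\pm$ of $\W^k(\ssl)$, for which $G_p(Q)\subseteq F_p(Q)$. The strategy is to show that, modulo $C_1^{\W^k(\ssl)}(Q)$, the $W$-spanning set of monomials $L_{-A}X_{-C}\psi^+_{-B}\psi^-_{-\tilde B}$ applied to a top vector (Notation~\ref{notation}) collapses to finitely many classes. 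Three mechanisms drive this. The modes $J_{-n}$ ($n\geq1$) lie in $C_1^{\W^k(\ssl)}(Q)$, and by $J=-\tfrac{\ell}{2(\ell+2)}X+:\psi^+\psi^-:$ from \eqref{Gminus} they let one trade $X$-modes for fermion bilinears, lowering $X$-degree modulo $C_1$; the field $G^+=\psi^+$ gives the $\psi^+$-modes directly; and $\psi^-$-descendants together with the $L$-descendants are produced from $G^-=\big((\ell+2)L-\cdots\big)\psi^-$, via the computation $G^-_{-D}v=\prod_r p_r(h,\lambda)\,\psi^-_{-D}v$ of \eqref{eq:pr} followed by stripping fermions with $G^+$-modes, whose leading term $(\ell+2)L\,\psi^-$ recovers $L$-descendants up to strictly lower filtration. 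Swapping the roles of $G^+$ and $G^-$ (the filtration $\tilde G$) supplies the symmetric bookkeeping, and comparison of the two associated gradeds inside $\gr^F Q$ shows that the $\W^k(\ssl)$-action reaches all of $\gr^F Q$ up to lower order, so that $Q/C_1^{\W^k(\ssl)}(Q)$ is finite-dimensional.

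The main obstacle is precisely the possible vanishing of the scalars $p_r(h,\lambda)$ in \eqref{eq:pr}: when some $p_r(h,\lambda)=0$ the map $G^-_{-D}\mapsto\prod_r p_r\,\psi^-_{-D}$ degenerates, so one cannot directly strip $G^-$ to the bare fermion descendant, and the generation argument fails for special pairs $(h,\lambda)$; since the theorem must hold for all $\lambda$, these cannot be avoided by genericity. I would resolve this by noting that $p_r(h,\lambda)$ is quadratic in $r$, so at most two fermion levels are ever blocked for a fixed $(h,\lambda)$, and the blocked levels enlarge the quotient only within a bounded range of modes, hence by a finite amount. To make this rigorous I would combine it with the degenerate structure forced by the hypothesis: $C_1$-cofiniteness of $L(c_\ell,h)$ gives $h\in H_{c_\ell}$ and, by Proposition~\ref{noc1}, finite length with composition factors $L(c_\ell,h')$, $h'\in H_{c_\ell}$; the associated Virasoro singular vector (Proposition~\ref{thm:vermared}) descends to a relation in $Q$ that, together with Lemma~\ref{c1cofinite}, bounds the number of surviving classes. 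Alternatively one passes to composition factors first via Lemma~\ref{c1cofinite} and establishes the finiteness factor by factor.
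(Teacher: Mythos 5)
Your preliminary reductions are fine: the tensor-product formula for $C_1$, the resulting equivalence $(\star)$, and the easy direction all agree with the paper. The genuine gap is in the converse. Your three mechanisms --- trading $X$-modes for fermion bilinears via $J$, producing $\psi^+$-modes from $G^+=\psi^+$, and producing $\psi^-$- and $L$-descendants by stripping $G^-$-descendants --- amount to proving that $Q$ is generated over $\W^k(\ssl)$ by its top vector. That is Corollary \ref{cor:mod_realized}, which the paper already has, and cyclicity cannot imply $C_1$-cofiniteness: for $h\notin H_{c_\ell}$ and generic $\lambda$ the module $V(c_\ell,h)\otimes\cF\otimes\pi^X_\lambda$ is a cyclic, indeed Verma, $\W^k(\ssl)$-module (Remark \ref{rem:generic}) and is \emph{not} $C_1$-cofinite, because modulo $C_1^{\W^k(\ssl)}$ cyclicity only reduces a module to the span of the infinitely many vectors $T_{-1}^n(G^+_{-1/2})^{\epsilon_+}(G^-_{-1/2})^{\epsilon_-}\pi(z)$. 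So the hypothesis that $L(c_\ell,h)$ is $C_1$-cofinite must enter through the Virasoro singular vector $S^{Vir}=L_{-1}^Nw+\cdots$ of Proposition \ref{sing}, and the entire difficulty is to show that its vanishing in $Q$ yields a relation that is nontrivial modulo $C_1^{\W^k(\ssl)}(Q)$: concretely, that when $S^{Vir}$ is rewritten in $\W^k(\ssl)$-modes the coefficient of $T_{-1}^N z$ is nonzero, so that $T_{-1}^N\pi(z)+\gamma T_{-1}^{N-1}G^+_{-1/2}G^-_{-1/2}\pi(z)\in C_1(Q)$ and, after applying $G^\pm_{-1/2}$ and the involution $\tau$, all long $T_{-1}$-strings die in $C_1(Q)$. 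This leading-coefficient statement is exactly what the paper's Step 1 establishes by playing the two Li filtrations $F$ and $G$ against each other (the inequality $2|A|+3|B|+|C|\ge 2N\ge 2|A|+2|C|+2|B|^2$ isolating three monomials, then $\alpha=1$). Your proposal asserts this conclusion --- ``descends to a relation in $Q$ that bounds the number of surviving classes'' --- without any argument, and it is the heart of the proof, not a finishing touch.

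The non-generic $\lambda$ case is a second gap. The heuristic that $p_r(h,\lambda)$ is quadratic in $r$, so at most two fermion levels are blocked and the quotient grows ``by a finite amount,'' is not a proof: once some $p_r(h,\lambda)=0$, Corollary \ref{cor:mod_realized} fails, $Q$ need no longer be cyclic over $\W^k(\ssl)$, and the whole filtration bookkeeping (which presupposes that $M$ is spanned by monomials in $\W^k(\ssl)$-modes applied to the top vector $z$) breaks down; nothing in your sketch controls what replaces it. Your alternative --- pass to composition factors and apply Lemma \ref{c1cofinite} --- requires knowing that $Q$ has finite length as a $\W^k(\ssl)$-module with $C_1$-cofinite factors, neither of which is available at this point (finite length is obtained only for the weight category of the simple quotient $\W_k(\ssl)$, in Corollary \ref{cor:fl}, whereas $Q$ is merely a $\W^k(\ssl)$-module). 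The paper's resolution is different and clean: choose generic $\lambda',\lambda''$ with $\lambda'+\lambda''=\lambda$, lift the surjective Fock-module intertwining operator of type ${\pi^X_\lambda\choose\pi^X_{\lambda'}\,\pi^X_{\lambda''}}$ to a surjective $\W^k(\ssl)$-intertwining operator onto $L(c_\ell,h)\otimes\cF\otimes\pi^X_\lambda$, and invoke Miyamoto's Theorem \ref{miyamoto}. Without this step, or the substantial work of redoing the degenerate analysis directly, the statement is not proved for all $\lambda$, which is what the theorem requires.
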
 
\begin{proof} 
Let $W= \vir^\ell \otimes \cF \otimes \pi^X$.
One direction is obvious: If $L(c_\ell, h) \otimes F \otimes \pi^X_\lambda$ is $C_1$-cofinite as a $\W^k(\ssl)$-module, then it is in particular $C_1$-cofinite as as a $W$-module. This can only be true if every tensor factor is $C_1$-cofinite.

We have to prove the converse.  
Set $M = V(c_\ell, h)\otimes F \otimes \pi^X_\lambda$ and let $\pi : M \rightarrow Q$ be the projection onto $Q$. 
Let $w$ be the top level vector of $M(c_\ell, h)$  and $z:=w \otimes | 0 \rangle  \otimes | \lambda \rangle$ be the one of $M$.
Here  $| 0 \rangle$ and   $| \lambda \rangle$ are the vacuum vector of $F$ and the highest-weight vector of $\pi^X_\lambda$.
 \\[-3mm]

\noindent {\bf Case 1:} We consider $\lambda$ generic in the sense of Remark \ref{rem:generic}, that is $\lambda$ is such that 
\[
 h \notin \left\{ \frac{1}{\ell+2}  \left( \left( r-1+ \frac{\ell}{4} \widetilde \lambda \right)^2 - (\ell+1)\left(r-1 + \frac{\ell}{4} \widetilde \lambda\right) \right) \ \Big|   \ r \in \mathbb Z_{>0} \ ,  \  \widetilde \lambda \in \{ \lambda, \lambda^\tau\} \ \right\}
\] 
Then 
\[
\mathcal V_{\Delta, \mu} \cong V(c_\ell, h)  \otimes \cF \otimes \pi^X_\lambda\qquad\text{and}\qquad
\mathcal V^\tau_{\Delta, \mu} \cong V(c_\ell, h^\tau)  \otimes \cF \otimes \pi^X_{\lambda^\tau}
\]
with $\Delta = \Delta(h, \lambda) = h - \frac{1}{2} - \frac{\ell^2 \lambda^2 }{16(\ell+2)} - \frac{\ell \lambda}{4}$ and $\mu =  \mu(\lambda) = -\frac{\ell \lambda}{2 (\ell+2)} -1$.
 \\[-3mm]

By assumption $L(c_\ell, h)$ is $C_1$-cofinite as a $\vir^{c_\ell}$-module and so $V(c_\ell, h)$ has a singular vector $S^{Vir}$ of the form $S^{Vir} = L_{-1}^N w + C_1(V(c_\ell, h))$ (Proposition \ref{sing}).
 \\[-3mm]
 
\noindent {\bf Step 1:} $T_{-1}^NG^+_{-1/2}  \pi(z)   \ \ \in \ C_1(Q)$.

Set $S = S^{Vir}  \otimes | 0 \rangle  \otimes | \lambda \rangle$, it is a singular vector of $M$ and so in particular its image in $Q$ is zero.  
$S$ has conformal weight $N+ h$ and $X_0$ weight $\lambda$ and note that the subspace of $G_{2N-1}(M)$ at weight $(N+h, \lambda)$ is clearly contained in $C_1(M)$. 

We use Li's filtrations $F, G$ from Example \ref{ex:filtration}.
$W$ is strongly and freely generated by $L, b, c, X$.
Let $T$ be the Virasoro field of $\W^k(\ssl)$, then another such generating set is $T, b, c, J$. 
Denote by $F$ and $F'$ Li's standard filtrations with respect to these two sets of strong generators, they coincide $F = F'$. 

Recall Proposition \ref{sing}, saying that 
\[
S^{Vir}  = L_{-1}^Nw  + \dots \ .
\]
 Here the dots denote a sum of ordered monomials in the $L_{-1}, L_{-2}, \dots$ with at least one factor not equal to $L_{-1}$. In particular $L_{-1}^N$ is the only summand that has $L$-degree $N$ and all others have strictly smaller degree. Since $L_{-n}$ in $F_2$ it follows that 
the image of $S$ in $M/F_{2N-1}(M)$ is just  $ L_{-1}^Nz$.

Clearly the subspace of conformal weight $h_M+N$ and $J_0$ weight $\nu = - \frac{ \ell\lambda}{2(\ell+2)}$ is spanned by this vector 
\[
\left(M/F_{2N-1}(M)\right)_{ h + N, \nu} = \text{span}(L_{-1}^Nz). 
\]
We change perspective and replace the strong generator $L$ by $T$. Then this subspace is equivalently spanned by $T_{-1}^Nz$,
\[
\left(M/F_{2N-1}(M)\right)_{h+ N, \nu} = \text{span}(T_{-1}^Nz). 
\]

Let $G$ be Li's standard filtration on $M$ viewed as an $\W^k(\ssl)$-module with an unusual choice of conformal structure.
In particular $T$ has degree two, $G^+ = \psi^+$ has degree $1/2$ and $G^-$ has degree $5/2$ and $J$ has degree one.
This is the filtration that is compatible with $F$ in the sense that these generators have the same degree with respect to $F$, see Example \ref{ex:filtration}. In particular this means that 
\[
G_p(M) \subset F_p(M).
\]
This means in particular that $S \notin G_{2N-1}(M)$. We determine $S \mod G_{2N-1}(M)$. For this we retain Notation \ref{notation}
and consider
\[
Y = X_{-C} T_{-A}G^+_{-B} G^-_{-\tilde B} z.
\]

$Y$ having $J_0$ weight $\nu$ implies that $r=s$, i.e. $|B| = |\tilde B|$. 
Then $Y \notin G_{2N-1}(M)$ implies that $2|A| + 3|B| + |C| \geq 2N$
and the conformal weight of $Y$ being $h_M + N$ implies that $N \geq |A| + |C| + (1  + 3 + \dots  + (2|B|-1)) = |A|+|C| +|B|^2$.
We thus get
\[
2|A| + 3|B| + |C| \geq 2N \geq 2|A|+2|C| + 2|B|^2.
\]
The only possibilities are thus $(|A| =N, |B| = 0 = |C|)$, $(|A| = N-1, |B| = 1, |C| = 0)$ and $(|A| = N-2, |B| = 1 = |C|)$.
The unique corresponding monomials of conformal weight $h+N$ are 
\[
T_{-1}^Nz, \quad T_{-1}^{N-1}G^+_{-1/2} G^-_{-1/2}z \quad\text{and} \quad  X_{-1} T_{-1}^{N-2}G^+_{-1/2} G^-_{-1/2} z.
\]
These three vectors span 
\[
\left(G_{2N+1}(M)/G_{2N-1}(M)\right)_{h+N, \nu} = \text{span}(T_{-1}^Nz,  T_{-1}^{N-1}G^+_{-1/2} G^-_{-1/2} z,  X_{-1} T_{-1}^{N-2}G^+_{-1/2} G^-_{-1/2}z).
\]
It follows that 
\begin{equation} \label{S mod G} \nonumber
S \mod G_{2N-1}(M) = \alpha T_{-1}^Nz + \beta X_{-1} T_{-1}^{N-2}G^+_{-1/2} G^-_{-1/2}z + \gamma T_{-1}^{N-1}G^+_{-1/2} G^-_{-1/2} z 
\end{equation}
for certain $\alpha, \beta, \gamma$ at least one of them non-zero. From the proof of Corollary \ref{cor:mod_realized} we know that 
\[
G^{-}_{-1/2} z = p_1(h, \lambda) \psi^-_{-1/2} z
\]
with $p_1$ defined in \eqref{eq:pr}.
 Hence $T_{-1}^{N-1}G^+_{-1/2} G^-_{-1/2}z$  and $X_{-1} T_{-1}^{N-2}G^+_{-1/2} G^-_{-1/2} z$ are both in $F_{2N-1}(M)$, i.e. since $S \mod F_{2N-1}(M) = T_{-1}^Nz \mod F_{2N-1}(M)$ we have that $\alpha=1$. 

Clearly the weight $(N+h, \nu)$ subspace of $G_{2N-1}(M)$ is in $C_1(M)$ and so we have
\begin{equation} \label{eq:c1-cond}
S \mod C_1(M) =  T_{-1}^Nz  + \gamma T_{-1}^{N-1}G^+_{-1/2} G^-_{-1/2} z  \mod C_1(M).
\end{equation}
Let $Q = L(c_\ell, h) \otimes \cF \otimes \pi^X_\lambda$ with $L(c_\ell, h)$ the simple quotient of $V(c_\ell, h)$ as before. let $\pi : M \rightarrow Q$ be the projection of $M$ onto $Q$. 
In particular the image of $S$ in $Q$ is zero, $\pi(S) = 0$. Hence 
\[
T_{-1}^N \pi(z)  + \gamma T_{-1}^{N-1}G^+_{-1/2} G^-_{-1/2} \pi(z) \ \ \in \ C_1(Q).
\]
Since $G^+_{-1/2}T_{-1}^{N-1}G^+_{-1/2} G^-_{-1/2} \pi(z) \  \in \ C_1(Q)$ it follows that also 
\[
T_{-1}^N G^+_{-1/2} \pi(z)   \ \ \in \ C_1(Q).
\]
\\[-10mm]
 
\noindent {\bf Step 2:} $ T_{-1}^N G^-_{-1/2} \pi(z)   \ \ \in \ C_1(Q)$.

Repeating Step 1 for $\lambda$ replaced by $\lambda^\tau$, $z$ replaced by $z^\tau := w \otimes | 0 \rangle \otimes | \lambda^\tau \rangle$, 
$M$ replaced by $M^\tau = V(c_\ell, h) \otimes \cF \otimes \pi^X_{\lambda^\tau}$ and $Q$ replaced by  $Q^\tau = L(c_\ell, h) \otimes \cF \otimes \pi^X_{\lambda^\tau}$
 shows that $ T_{-1}^NG^+_{-1/2} \pi(z^\tau)$ is in $C_1(Q^\tau)$. The involution $\tau$ clearly preserves $C_1$-cofiniteness, i.e. $\tau(C_1(Q^\tau)) = C_1(Q)$. In particular 
 \[
 \tau\left(T_{-1}^N G^+_{-1/2}  \pi(z^\tau)\right) = T_{-1}^N G^-_{-1/2}  \pi(z)   \ \ \in \ C_1(Q).
 \]
\\[-10mm]
 
\noindent {\bf Step 3:}  $Q$ is $C_1$-cofinite.

Acting with $G^\pm_{-1/2}$ on $T_{-1}^N G^\mp_{-1/2}  \pi(z)$ shows that $T_{-1}^{N+1} \pi(z)$ and $T_{-1}^N G^+_{-1/2} G^-_{-1/2}  \pi(z)$ are also in $C_1(Q)$.
It follows that $Q/C_1(Q)$ is spanned by 
\[
T_{-1}^n z, \qquad T_{-1}^m G^\pm_{-1/2}z, \qquad T_{-1}^m G^+_{-1/2}G^-_{-1/2}z
\]
with $0 \leq m < N$ and $0 \leq n < N+1$, i.e. $Q$ is $C_1$-cofinite.
\\[-3mm]



\noindent{\bf Case 2:} $\lambda$ non-generic.

Recall that case 1 proves the Theorem for all but countable many $\lambda$. Let now $\lambda$ be non-generic, then we 
choose $\lambda', \lambda''$, s.t $\lambda'+\lambda'' =\lambda$ and such that both $\lambda', \lambda''$ are generic. 
The surjective intertwining operator ${ \pi^X_\lambda \choose \pi^X_{\lambda'} \, \pi^X_{\lambda''}}$ of Fock modules lifts to a surjective $\W^k(\ssl)$ intertwining operator 
\[
{ L(c_\ell, h) \otimes F \otimes \pi^X_\lambda \choose  \vir^{\ell} \otimes F \otimes \pi^X_{\lambda'} \quad  L(c_\ell, h) \otimes F \otimes \pi^X_{\lambda''} }
\]
and hence $L(c_\ell, h) \otimes F \otimes \pi^X_\lambda$ is $C_1$-cofinite as well by Theorem \ref{miyamoto}.
\end{proof}

\begin{corollary}\label{cor:C1}
Let $\ell$ be an admissible level for $\sltwo$ and $k$ be defined via $(\ell+2)(k+1) =1$. Then every object of  $\ssWtsl{k}$ is $C_1$-cofinite.
\end{corollary}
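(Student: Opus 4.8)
The plan is to deduce the corollary from Theorem \ref{keythm} by a standard two-step reduction, with all the genuine content residing in that theorem. First I would cut down to simple objects: by Corollary \ref{cor:fl} every object of $\ssWtsl{k}$ has finite length, and by Lemma \ref{c1cofinite}(\ref{inclusion}) a finite-length module all of whose composition factors are $C_1$-cofinite is itself $C_1$-cofinite. Hence it suffices to prove that every simple object $N\in\ssWtsl{k}$ is $C_1$-cofinite.

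Next I would realise $N$ through the free-field picture of Section \ref{ff}. Since $\ell$ is admissible and (under the standing assumption $v\ge 2$) non-integral, Proposition \ref{prop;emb} furnishes a simple $\vir_\ell\otimes\cF\otimes\pi^X$-module $\widetilde N$ whose simple quotient, viewed as a $\W_k(\ssl)$-module, is $N$. I would then pin down the shape of $\widetilde N$ so that Theorem \ref{keythm} applies: as $\vir_\ell$ is a minimal model (its central charge $c_\ell$ equals $c(u/v)$, hence is rational), $\cF$ is a holomorphic free-fermion vertex superalgebra, and $\pi^X$ is Heisenberg, the simple module $\widetilde N$ must be an outer tensor product of simple modules of the three factors. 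The free-field construction forces the fermionic factor to be $\cF$ itself, the Heisenberg factor is a Fock module $\pi^X_\lambda$, and the Virasoro factor is one of the simple minimal-model modules $\vir^\ell_{r,s}=L(c_\ell,h_{r,s})$ with $h_{r,s}\in H_c$. Thus $\widetilde N\cong L(c_\ell,h)\otimes\cF\otimes\pi^X_\lambda$ with $h\in H_c$.

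Then I would invoke Theorem \ref{keythm}. By Proposition \ref{noc1}(\ref{it:c1notverma}) the condition $h\in H_c$ makes $L(c_\ell,h)$ $C_1$-cofinite as a $\vir^\ell$-module, so the theorem yields that $\widetilde N$ is $C_1$-cofinite as a $\W^k(\ssl)$-module. Since $\widetilde N$ is in fact a $\W_k(\ssl)$-module and the positive modes act through the simple quotient $\W_k(\ssl)$ of $\W^k(\ssl)$, its $C_1$-subspace is the same whether computed over $\W^k(\ssl)$ or over $\W_k(\ssl)$, so $\widetilde N$ is $C_1$-cofinite as a $\W_k(\ssl)$-module. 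Finally, because $N$ is a quotient of $\widetilde N$, Lemma \ref{c1cofinite}(\ref{qu}) gives that $N$ is $C_1$-cofinite, which completes the reduction and hence the proof.

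I expect the main obstacle to be the bookkeeping in the second step: confirming that the simple module $\widetilde N$ produced by the semi-infinite cohomology functor of Proposition \ref{prop;emb} really carries the \emph{full} free-fermion factor $\cF$ together with a \emph{minimal-model} Virasoro factor $\vir^\ell_{r,s}$, so that Theorem \ref{keythm} applies verbatim rather than to some twisted fermion sector or a non-minimal Virasoro module. Once the tensor-factor structure is pinned down, the remaining steps are formal, relying only on the closure of $C_1$-cofiniteness under quotients and under finite extensions recorded in Lemma \ref{c1cofinite}.
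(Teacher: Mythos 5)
Your argument for non-integral $\ell$ is essentially the paper's own: reduce to simple objects via Corollary \ref{cor:fl} and Lemma \ref{c1cofinite}, realize each simple $N$ as the simple quotient of a simple $\vir_\ell \otimes \cF \otimes \pi^X$-module via Proposition \ref{prop;emb}, identify that module as $L(c_\ell,h)\otimes\cF\otimes\pi^X_\lambda$ with $h \in H_c$, and conclude with Theorem \ref{keythm} together with the closure of $C_1$-cofiniteness under quotients. That part is correct, including your observation that the $C_1$-subspace of a $\W_k(\ssl)$-module is the same whether computed over $\W^k(\ssl)$ or its simple quotient.

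The genuine gap is your ``standing assumption $v \geq 2$'': no such assumption is in force for this corollary. Admissible levels for $\sltwo$ are $\ell = -2 + \frac{u}{v}$ with $u,v$ coprime positive integers and $u \geq 2$, so $v=1$ is allowed, i.e.\ the integral levels $\ell \in \mathbb{Z}_{\geq 0}$ are admissible and are covered by the statement. For those levels your entire machinery is unavailable: Theorem \ref{keythm} is stated only for $\ell \notin \mathbb{Z}_{\geq 0}$, Proposition \ref{prop;emb} assumes $\ell$ non-integral, and Adamovic's embedding $L_\ell(\sltwo) \hookrightarrow \vir_\ell \otimes \Pi(0)$, on which both rest, holds only for non-integral admissible $\ell$. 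The paper disposes of this case by a separate and different argument: for integral admissible $\ell$ the vertex superalgebra $L_\ell(\sltwo) \otimes \cF$ is strongly rational, hence its Heisenberg coset $\W_k(\ssl)$ is strongly rational by \cite{CKLR}, so every object of $\ssWtsl{k}$ is even $C_2$-cofinite, which implies $C_1$-cofiniteness. You need to add this case (or an equivalent argument) for your proof to cover the full statement.
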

\begin{proof}
If $\ell$ is admissible and integral then $\W_k(\ssl)$ is strongly rational as a Heisenberg coset of a strongly rational vertex operator algebra \cite{CKLR}  and  in particular every object is even $C_2$-cofinite (which implies $C_1$-cofiniteness). 
Let $\ell$ be admissible and not integral. The simple Virasoro algebra $\vir^\ell$ is strongly rational and in particular every of its modules is $C_1$-cofinite. Hence by the previous Theorem every $\vir^{\ell} \otimes \cF \otimes \pi^\gamma$-module is $C_1$-cofinite as a $\W_k(\ssl)$-module. 
Lemma \ref{c1cofinite} together with Proposition \ref{prop;emb} imply that every simple $\W_k(\ssl)$-module is $C_1$-cofinite and hence every object that is of finite length by Lemma \ref{c1cofinite} again. But $\ssWtsl{k}$ is of finite length by Corollary \ref{cor:fl}.
\end{proof}

\section{Existence of vertex tensor category}\label{tc}

The existence of vertex tensor categories is in general a difficult problem. The best criterion for us is 
\begin{theorem}\label{thm:existence}
Let $V$ be a vertex operator superalgebra. If every lower-bounded $C_1$-cofinite $V$-module is of finite-length and if the category of lower-bounded $C_1$-cofinite modules is closed under the contragredient dual functor, then the category of  lower-bounded $C_1$-cofinite $V$-modules is a vertex tensor supercategory. 
\end{theorem}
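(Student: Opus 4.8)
The plan is to show that the category $\mathcal{C}$ of lower-bounded $C_1$-cofinite $V$-modules satisfies every standing hypothesis of the Huang--Lepowsky--Zhang logarithmic tensor category theory \cite{HLZ1,HLZ2,HLZ3,HLZ4,HLZ5,HLZ6,HLZ7,HLZ8}, so that its vertex tensor supercategory structure can be imported wholesale; all of the cited inputs have super-analogues with identical proofs. The two hypotheses of the Theorem, finite length and closure under contragredients, are precisely the conditions that are not automatic, and the remaining standing assumptions will be supplied by the general results recalled above, namely Lemma \ref{c1cofinite}, Theorem \ref{miyamoto}, and Huang's cofiniteness results \cite{H09}.

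First I would verify that $\mathcal{C}$ is an abelian category of grading-restricted generalized modules closed under subquotients. Any quotient of a $C_1$-cofinite module is $C_1$-cofinite by Lemma \ref{c1cofinite}(1); by the finite-length hypothesis any submodule again has finite length with $C_1$-cofinite composition factors, hence is $C_1$-cofinite by Lemma \ref{c1cofinite}(2), so $\mathcal{C}$ is closed under submodules, quotients and finite direct sums. A lower-bounded $C_1$-cofinite module is finitely generated (lift a basis of $W/C_1(W)$), and a finitely generated lower-bounded generalized module over a vertex operator superalgebra with finite-dimensional conformal weight spaces has finite-dimensional $L_0$-generalized eigenspaces, so every object of $\mathcal{C}$ is grading-restricted. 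Closure under the contragredient functor is the second hypothesis.

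Next I would establish closure under $P(z)$-tensor products. For $W_1, W_2 \in \mathcal{C}$, Huang's analysis of $C_1$-cofinite modules \cite{H09} guarantees both that the $P(z)$-tensor product $W_1 \boxtimes_{P(z)} W_2$ exists as a lower-bounded generalized module and that products and iterates of intertwining operators among objects of $\mathcal{C}$ converge and satisfy the convergence and extension property required by \cite{HLZ1,HLZ2,HLZ3,HLZ4,HLZ5,HLZ6,HLZ7,HLZ8}; this is exactly the point at which $C_1$-cofiniteness of all modules involved is used. By Theorem \ref{miyamoto} the tensor product $W_1 \boxtimes_{P(z)} W_2$ is itself $C_1$-cofinite, and being lower-bounded it has finite length by hypothesis, so it lies in $\mathcal{C}$. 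With the convergence and extension property in hand, the associativity isomorphisms of \cite{HLZ1,HLZ2,HLZ3,HLZ4,HLZ5,HLZ6,HLZ7,HLZ8} are defined, and since all the standing assumptions are now met the category $\mathcal{C}$ acquires vertex tensor supercategory structure.

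The genuinely hard analytic input, namely the convergence and extension property for products and iterates of intertwining operators, is not something I would reprove: it is the content of Huang's work \cite{H09}, whose hypotheses reduce to the $C_1$-cofiniteness of the modules that we have assumed throughout, and whose proof (via systems of differential equations with regular singular points) carries over verbatim to the super setting. Consequently the only work specific to the present situation is the bookkeeping of the two middle paragraphs above, confirming that finite length and closure under contragredients upgrade the $C_1$-cofinite category to one meeting the full list of Huang--Lepowsky--Zhang requirements.
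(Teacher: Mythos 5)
Your overall plan---verify the Huang--Lepowsky--Zhang standing assumptions for the category $\mathcal C$ of lower-bounded $C_1$-cofinite modules, importing the analytic input (convergence and extension) from Huang \cite{H09} and the $C_1$-cofiniteness of fusion products from Miyamoto (Theorem \ref{miyamoto})---is exactly the route of the references the paper itself relies on: the paper gives no independent proof of Theorem \ref{thm:existence}, but cites \cite[Section 4]{C-Vir}, \cite[Section 3]{CY}, \cite[Theorem 2.3]{Mc-mirror} and the superalgebra remark in \cite[Section 2.3]{CMY1}. So the strategy is sound. However, the one substantive piece of ``bookkeeping'' you actually carry out contains a genuine circularity: closure of $\mathcal C$ under submodules. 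You claim that a submodule $U \subseteq W$ of an object $W$ ``has finite length with $C_1$-cofinite composition factors, hence is $C_1$-cofinite by Lemma \ref{c1cofinite}(2).'' But nothing you have established shows that the composition factors of $W$ (equivalently of $U$) are $C_1$-cofinite: Lemma \ref{c1cofinite}(1) gives this only for the top factor $W/W_{n-1}$ of a composition series, while the lower factors $W_i/W_{i-1}$ are quotients of the \emph{submodules} $W_i$---precisely the objects whose $C_1$-cofiniteness you are trying to prove. No induction breaks this circle, so as written this step fails, and with it the abelianness of $\mathcal C$ that everything else rests on.

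The missing idea is that submodule closure is where the contragredient hypothesis does real work, rather than being a mere checkbox for HLZ's duality requirement (which is the only use you make of it). For $U \subseteq W$ with $W$ lower bounded and $C_1$-cofinite (hence grading restricted), restriction of functionals gives a surjective module map $W' \twoheadrightarrow U'$, so $U'$ is $C_1$-cofinite by the quotient lemma; $U'$ is also lower bounded, so the hypothesis that $\mathcal C$ is closed under contragredients applies to it and yields that $U \cong (U')'$ is $C_1$-cofinite. This is the argument in the cited references, and once it is in place your first paragraph (and, a posteriori, $C_1$-cofiniteness of all composition factors) goes through. Two smaller inaccuracies are worth fixing. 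First, lower-bounded $C_1$-cofinite modules are grading restricted not because finitely generated lower-bounded modules automatically have finite-dimensional weight spaces (false in general: generalized Verma modules over affine algebras are cyclic with infinite-dimensional $L_0$-eigenspaces), but because iterating $W = C_1(W) + F$ with $\dim F < \infty$ expresses each weight space as the span of finitely many monomials $u^{(1)}_{-1}\cdots u^{(k)}_{-1}f$. Second, \cite{H09} supplies the convergence and extension properties for intertwining operators among $C_1$-cofinite modules, but the existence of $W_1 \boxtimes_{P(z)} W_2$ \emph{as an object of} $\mathcal C$ is a separate argument (the HLZ compatibility-condition construction combined with Theorem \ref{miyamoto}, the finite-length hypothesis, and again contragredient closure); that verification is the actual content of \cite[Section 3]{CY} and \cite[Theorem 2.3]{Mc-mirror}, not of \cite{H09}.
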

This Theorem arose from \cite[Section 4]{C-Vir}, has been proven for vertex operator algebras \cite[Section 3]{CY} and phrased much better in \cite[Theorem 2.3]{Mc-mirror}. Finally \cite[section 2.3]{CMY1} notes that this works equally well for vertex operator superalgebras. 

Let $V$ be a vertex operator superalgebra and $A \supset V$ a conformal extension of $V$. Let $\mathcal C$ be a vertex tensor category of $V$ modules and assume that $A$ is an object in $\mathcal C$. Then $A$ is identified with a commutative superalgebra in $\mathcal C$ \cite{HKL, CKL}. 
A commutative superalgebra $A$ in a braided tensor supercategory $\mathcal C$ gives rise to a braided tensor supercategory $\text{Rep}^0(A)$ of local $A$-modules in $\mathcal C$. Moreover $\text{Rep}^0(A)$ is isomorphic as a braided tensor supercategory to the vertex tensor supercategory of modules of the vertex operator superalgebra $A$ that lie in $\mathcal C$ \cite{CKM}.

Finally, there is a tensor functor $\cF$ from $\mathcal C$ to $\text{Rep}(A)$, the tensor category of  not necessarily local $A$-modules. One has that $\cF(X)$ is in $\text{Rep}^0(A)$ if and only if the monodromy (double braiding) of $X$ with $A$ is the identity on $X \boxtimes A$, $\mathcal M_{A, X} = \text{Id}_{A \boxtimes X}$. Here $\boxtimes$ denotes the tensor product in $\mathcal C$.
As a $\mathcal C$-module $\mathcal F(X) \cong_{\mathcal C} A \boxtimes X$.

Sometimes an extension $A$ is not of finite index, i.e. an infinite sum of $V$-modules. In this instance the above statements are generalized to the direct limit completion $\text{Ind}(\mathcal C)$
of $\mathcal C$ in \cite{CMY2}.

Our setting is of the form (compare with  \eqref{eq:ext})
\[
A = \bigoplus_{n \in \mathbb Z} V_n \otimes \pi_n
\]
with $V$ some vertex operator algebra, the $V_n$ simple $V$-modules that lie in a vertex tensor category of $V$-modules and $\pi$ a rank one Heisenberg vertex operator algebra of some non-zero level and $\pi_n$ Fock modules of weight $n$. 

This setting has appeared a few times: 
\begin{enumerate}
\item 
In \cite{CMY3}
in the case that $V$ is the singlet vertex algebra $\mathcal M(p)$ and $A$ the $\mathcal B(p)$-algebra for $p\in \mathbb Z_{\geq 2}$. 
\item In \cite{CMY1} in the case that $V$ is $L_k(\mathfrak{gl}_{1|1})$, the affine  vertex superalgebra of $\mathfrak{gl}_{1|1}$ at any non-zero level $k$ and $A$ is $L_{-1/2}(\mathfrak{sl}_{2|1})$ the simple affine  vertex  superalgebra of $\mathfrak{sl}_{2|1}$. at level $-1/2$. 
\item In \cite{ACPV} in the case that $V$ is $L_{-n-\frac{1}{2}}(\mathfrak{sl}_{2n}) \otimes \mathcal M(2)$ and $W$ is. the minimal $W$-algebra of $\mathfrak{sl}_{2n+2}$ at level $-n -\frac{3}{2}$. 
\end{enumerate}
In particular in the first two cases it has been used to get vertex tensor (super)category structure on a category of non lower-bounded (in particular non $C_1$-cofinite) modules. 

We now have a similar result with $V = \W_k(\ssl)$ and $A  =   L_\ell(\sltwo) \otimes \cF$.
\begin{theorem}\label{thm:main}
Let $\ell$ be an admissible level for $\sltwo$ and $k$ be defined via $(\ell+2)(k+1) =1$. Then $\ssWtsl{k}$  is a vertex tensor supercategory and $\sWtsl{\ell}$ is a vertex tensor category in the sense of \textup{\cite{HLZ1, HLZ2, HLZ3, HLZ4, HLZ5, HLZ6, HLZ7, HLZ8}}.
\end{theorem}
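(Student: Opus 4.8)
The plan is to obtain both statements from the existence criterion Theorem \ref{thm:existence}, applied to $V = \W_k(\ssl)$, together with the vertex-superalgebra-extension machinery recalled above. The essential analytic input, namely $C_1$-cofiniteness of every object, is already secured by Corollary \ref{cor:C1}, so the remaining work is to check the hypotheses of Theorem \ref{thm:existence} and then to run the extension argument; both are largely formal.

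First I would treat $\ssWtsl{k}$. I claim this category coincides with the category of lower-bounded $C_1$-cofinite generalized $\W_k(\ssl)$-modules to which Theorem \ref{thm:existence} applies. One inclusion is immediate: objects of $\ssWtsl{k}$ are lower-bounded by definition and $C_1$-cofinite by Corollary \ref{cor:C1}. For the reverse inclusion, a lower-bounded $C_1$-cofinite module has finite-dimensional conformal weight spaces by a standard spanning argument (iterating the defining relation of $C_1(W)$ against the lower bound), and Proposition \ref{prop;emb} identifies its simple subquotients with simple objects of $\ssWtsl{k}$, on which $X_0 = J_0$ acts semisimply; this forces the module into $\ssWtsl{k}$. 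With the two categories identified, the finite-length hypothesis of Theorem \ref{thm:existence} is exactly Corollary \ref{cor:fl}. Closure under the contragredient dual holds because the graded dual of a lower-bounded grading-restricted weight module is again one (the set of weights, their multiplicities, and semisimplicity of $J_0$ are all preserved), hence lies in $\ssWtsl{k}$ and is $C_1$-cofinite by Corollary \ref{cor:C1}. Theorem \ref{thm:existence} therefore yields that $\ssWtsl{k}$ is a vertex tensor supercategory.

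Next I would transport the structure to $\sWtsl{\ell}$ along the Kazama-Suzuki extension. By \eqref{eq:ext},
\[
A = L_\ell(\sltwo) \otimes \cF \cong \bigoplus_{n \in 2\mathbb Z} \W_n \otimes \pi_n
\]
is a commutative superalgebra object in the vertex tensor supercategory $\mathcal C := \ssWtsl{k} \boxtimes \mathcal C(\pi)$ of $\W_k(\ssl) \otimes \pi$-modules, where $\mathcal C(\pi)$ is the vertex tensor category of Fock modules of the rank-one Heisenberg algebra $\pi$, and $\mathcal C$ is a vertex tensor supercategory by Part 1 and the Deligne product. Since $A$ is an infinite sum of objects of $\mathcal C$, I would view it as an algebra object in the direct limit completion $\Ind(\mathcal C)$ and invoke \cite{CKM, CMY2}: the category $\Rep^0(A)$ of local $A$-modules in $\Ind(\mathcal C)$ is a braided, indeed vertex, tensor supercategory, and is identified by \cite{CKM} with the vertex tensor supercategory of $L_\ell(\sltwo) \otimes \cF$-modules lying in $\Ind(\mathcal C)$. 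Finally, as $\cF$ is a rank-one lattice vertex superalgebra with semisimple rigid module category, the category of $L_\ell(\sltwo) \otimes \cF$-modules factorizes as $\sWtsl{\ell} \boxtimes (\cF\text{-mod})$; removing the fixed nondegenerate factor $\cF\text{-mod}$ exhibits $\sWtsl{\ell}$ as a vertex tensor category.

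The main obstacle is not any single computation but the correct handling of the two category identifications. The more delicate is the last step: one must verify that $A$ genuinely carries the commutative superalgebra structure in $\Ind(\mathcal C)$ furnished by the Kazama-Suzuki embedding, that the locality (trivial monodromy) condition cutting out $\Rep^0(A)$ matches precisely the modules appearing in $\sWtsl{\ell}$, and that splitting off the free-fermion factor is compatible with the braided tensor structure. Each of these is by now standard given the template of \cite{CMY1, CMY3, ACPV}, but they are where the argument must be executed with care.
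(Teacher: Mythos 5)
Your overall strategy coincides with the paper's: part (1) is deduced from Theorem \ref{thm:existence} together with Corollaries \ref{cor:C1} and \ref{cor:fl}, and part (2) transports the structure through the Kazama--Suzuki extension $A = L_\ell(\sltwo)\otimes\cF$ of $\W_k(\ssl)\otimes\pi$ via the Deligne product with the Heisenberg Fock category, the direct limit completion of \cite{CMY2}, and the category $\text{Rep}^0(A)$ of \cite{CKM}. Your second half is essentially the paper's proof and is fine.

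Part (1), however, contains a genuine gap. You claim that $\ssWtsl{k}$ \emph{equals} the category of lower-bounded $C_1$-cofinite generalized $\W_k(\ssl)$-modules, arguing that since the simple subquotients of such a module lie in $\ssWtsl{k}$, where $J_0$ acts semisimply, ``this forces the module into $\ssWtsl{k}$.'' That inference is invalid: semisimplicity of $J_0$ on every composition factor does not give semisimplicity of $J_0$ on the module itself, because of self-extensions on which $J_0$ acquires Jordan blocks. The paper exhibits exactly this phenomenon on the affine side: the Corollary following Proposition \ref{prop:ada} produces self-extensions $\widetilde{M}$ of objects of $\sWtsl{\ell}$ that are $L_\ell(\sltwo)$-modules but \emph{not} objects of $\sWtsl{\ell}$, since $h_0$ fails to act semisimply. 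The same mechanism works on the $\W_k(\ssl)$ side: take $L(c_\ell,h)\otimes\cF\otimes\widetilde{\pi}^X_\lambda$, where $\widetilde{\pi}^X_\lambda$ is the self-extension of the Fock module $\pi^X_\lambda$ on which $X_0$ acts with a rank-two Jordan block (constructed exactly as in \eqref{pi_ext}). This is a lower-bounded module of finite length whose composition factors all lie in $\ssWtsl{k}$, hence it is $C_1$-cofinite by part (2) of Lemma \ref{c1cofinite}; yet $J_0$ does not act semisimply on it, so it is not an object of $\ssWtsl{k}$. The two categories are therefore genuinely different, and this matters because the hypotheses and the conclusion of Theorem \ref{thm:existence} concern the \emph{full} category of lower-bounded $C_1$-cofinite generalized modules: your verification (finite length via Corollary \ref{cor:fl}, contragredient closure for grading-restricted weight modules) only covers the proper subcategory $\ssWtsl{k}$, so once the false identification is removed, the hypotheses of the theorem have not been checked for the category it speaks about.

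The repair is to run the argument on the full $C_1$-cofinite category and then restrict. Finite length there can be obtained from Corollary \ref{cor:fl} as follows: the nilpotent part $N$ of $J_0$ commutes with all modes, hence is a module endomorphism; since a lower-bounded $C_1$-cofinite module is generated by a finite-dimensional subspace, $N$ is globally nilpotent, and the finite filtration by the submodules $\ker N^i$ has subquotients that are weight modules, i.e.\ objects of $\ssWtsl{k}$, each of finite length. Contragredient closure of the full category then follows from part (2) of Lemma \ref{c1cofinite} applied to the contragredient, whose composition factors are contragredients of simple objects of $\ssWtsl{k}$ and hence $C_1$-cofinite by Corollary \ref{cor:C1}. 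Finally, to conclude that $\ssWtsl{k}$ itself is a vertex tensor supercategory you must check it is closed under the fusion product of the larger category, e.g.\ because intertwining operators add $J_0$-weights, so a fusion product of two $J_0$-semisimple objects is again $J_0$-semisimple. The paper's one-line proof of part (1) leaves these points implicit as well, but it does not assert the false equality of categories that your write-up makes load-bearing.
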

\begin{proof}
The first statement follows from Theorem \ref{thm:existence} together with Corollary \ref{cor:C1} and Corollary \ref{cor:fl}.
The second statement follows, namely from \eqref{eq:ext}
 $A:= L_\ell(\sltwo) \otimes \cF$ is a vertex operator algebra extension of $\W_k(\ssl) \otimes \pi$ for $\pi$ a rank one Heisenberg vertex operator algebra. The category $\mathcal{H}$ of Fock modules of $\pi$ 
  has a vertex tensor category structure, first stated in \cite{CKLR}.
 
 The tensor product of the two vertex operator algebras has a vertex tensor category structure as well and this is the Deligne product $\mathcal C:= \ssWtsl{k} \boxtimes \mathcal H$ of the categories \cite{CKM2, Mc-deligne}. Its direct limit completion $\text{Ind}(\mathcal C)$ is also a vertex tensor category \cite[Theorem 1.2]{CMY2} and then the category $\text{Rep}^0(A)$ of $L_\ell(\sltwo) \otimes \cF$ -modules that lie in this direct limit completion as well \cite[Theorem 1.4]{CMY2}. 
 The category of modules of $\cF$ is just vector superspaces and so this category of local modules is just $\sWtsl{\ell} \boxtimes \, \text{sVect}$ and
 in particular $\sWtsl{\ell}$ is a vertex tensor category.
\end{proof}

\subsection{Translating ribbon categories}

Let $k = -2 + \frac{u}{v}$ be an admissible level as before. 
We now use the theory of vertex algebra extensions to study the category $\sWtslA{k+1}$. 
Recall the coset decomposition
\[
A := \slirr{1}^k \otimes \slirr{1}^1  \cong \bigoplus_{\substack{ m=1 \\ m \ \text{odd} }}^{u+v-1} \slirr{m}^{k+1} \otimes \vir_{m, 1}^{k'}, \qquad k' = \frac{k+3}{k+2} -2. 
\]
In particular for  $\mathcal C = \sWtsl{k+1} \boxtimes \vir_{k'}\on{mod}$  the object $A$ is a commutative algebra in $\mathcal C$ and  $\sWtsl{k} \boxtimes\sWtsl{1} \cong
\text{Rep}^0(A)$.
For $r =1, \dots, u-1$ we define functors 
\[
\mathcal F_r : \sWtslA{k+1} \rightarrow \text{Rep}(A)
\]
by $\mathcal F_r(X) = \mathcal F(X \otimes \vir^{k'}_{1, r})$ on objects and $\mathcal F_r(f) = \mathcal F(f \otimes \text{Id}_{\vir_{1, r}^{k;}})$ on morphisms.

Note that the only identification of $\vir_{k'}$-modules are $\vir^{k'}_{m, r} \cong \vir_{u+v-m, u-r}$. 
In particular
$\text{Hom}(\vir^{k'}_{1, r} , \vir^{k'}_{m, r} ) = \mathbb C$ for $m=1$ and if $u$ is even also for $m = u+v-1$ and $r =\frac{u}{2}$. Otherwise it vanishes. If $u$ is even then $v$ is odd and so $u+v$ is odd and so $u+v-1$ is even in particular 
\begin{equation}\label{Hom-Vir}
\text{Hom}(\vir^{k'}_{1, r} , \vir^{k'}_{m, r} ) = \mathbb C \delta_{1, m} \qquad \text{if} \ m \ \text{is odd}.
\end{equation}

\begin{proposition}\label{prop:Fr}
Each $\mathcal F_r$ is fully faithful with image in  $\sWtsl{k} \boxtimes\sWtsl{1}$.
\end{proposition}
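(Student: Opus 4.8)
The plan is to prove the two assertions of the proposition separately: that each $\mathcal F_r$ is fully faithful, and that its image lands in $\text{Rep}^0(A)\cong\sWtsl{k}\boxtimes\sWtsl{1}$ and not merely in $\text{Rep}(A)$. Throughout I would use that $\mathcal F$ is the induction functor $A\boxtimes(-)$, left adjoint to restriction, and that $\mathcal C=\sWtsl{k+1}\boxtimes\vir_{k'}\on{mod}$ is a Deligne product so that $\boxtimes$ and $\Hom$ factorize over the two tensor factors.

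For full faithfulness I would compute morphism spaces by Frobenius reciprocity. For $X,Y\in\sWtslA{k+1}$,
\[
\Hom_{\text{Rep}(A)}\big(\mathcal F_r(X),\mathcal F_r(Y)\big)\cong\Hom_{\mathcal C}\big(X\otimes\vir_{1,r}^{k'},\ A\boxtimes(Y\otimes\vir_{1,r}^{k'})\big).
\]
Inserting the coset decomposition of $A$, using the factorization of $\boxtimes$ and the Virasoro fusion $\vir_{m,1}^{k'}\boxtimes\vir_{1,r}^{k'}\cong\vir_{m,r}^{k'}$ coming from \eqref{eq:fus-vir}, the right-hand side becomes
\[
\bigoplus_{m\ \text{odd}}\Hom_{\sWtsl{k+1}}\big(X,\slirr{m}^{k+1}\boxtimes Y\big)\otimes\Hom_{\vir_{k'}}\big(\vir_{1,r}^{k'},\vir_{m,r}^{k'}\big).
\]
By \eqref{Hom-Vir} the second factor vanishes for all odd $m\neq1$ and equals $\CC$ when $m=1$; since $\slirr{1}^{k+1}$ is the tensor unit the surviving term is $\Hom_{\sWtsl{k+1}}(X,Y)=\Hom_{\sWtslA{k+1}}(X,Y)$. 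Tracing the adjunction isomorphism shows it is realized by $\mathcal F_r$ on morphisms, giving full faithfulness. This step is essentially formal once \eqref{Hom-Vir} and the fusion rules are available.

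The substantive part is showing $\mathcal F_r(X)\in\text{Rep}^0(A)$. Here I would first compute the underlying $\mathcal C$-object of $\mathcal F_r(X)=A\boxtimes(X\otimes\vir_{1,r}^{k'})$: combining the decomposition of $A$, the factorization of $\boxtimes$, the Virasoro fusion above, and the $\sltwo$ fusion rules of Theorem \ref{thm:fusion} (which for $X\in\sWtslA{k+1}$, i.e. with first index $1$, force $\slirr{m}^{k+1}\boxtimes X$ to have first index $m$), one gets
\[
\mathcal F_r(X)\cong_{\mathcal C}\bigoplus_{m\ \text{odd}}(\slirr{m}^{k+1}\boxtimes X)\otimes\vir_{m,r}^{k'}.
\]
I would then match this term by term against the branching rules of Corollary \ref{cor:branching}: for a suitable $\tilde M\in\sWtsl{k}\boxtimes\sWtsl{1}$ whose $\sWtsl{k}$-factor carries first index $r$, equation \eqref{l1} (and its analogues \eqref{l2}--\eqref{l4} for the non-simple objects of $\sWtslA{k+1}$) yields $\Res_{\mathcal C}\tilde M$ equal to exactly this direct sum, with $X\otimes\vir_{1,r}^{k'}$ the $m=1$ summand. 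Thus $X\otimes\vir_{1,r}^{k'}$ is a $\mathcal C$-summand of a genuine local module, and the adjunction map $\mathcal F_r(X)=A\boxtimes(X\otimes\vir_{1,r}^{k'})\to\tilde M$ is a nonzero morphism of $A$-modules which on underlying $\mathcal C$-objects is a map between isomorphic finite-length modules; since $\tilde M$ is generated over $A$ by its $m=1$ component this map is surjective, hence an isomorphism, so $\mathcal F_r(X)\cong\tilde M\in\text{Rep}^0(A)$.

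The main obstacle is precisely this last identification: one must verify that the combined action of $\sltwo$ and Virasoro fusion reproduces exactly the $\mathcal C$-decomposition predicted by the branching rules, so that the a priori possibly non-local induced module is forced to coincide with an honest object of $\sWtsl{k}\boxtimes\sWtsl{1}$ — equivalently, that the monodromy $\mathcal M_{A,\,X\otimes\vir_{1,r}^{k'}}$ is trivial, which is guaranteed by the conformal-weight alignment encoded in Corollary \ref{cor:branching}. Full faithfulness, by contrast, is a clean consequence of reciprocity together with the Virasoro $\Hom$-vanishing \eqref{Hom-Vir}.
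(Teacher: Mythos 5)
Your full-faithfulness argument is correct and is essentially the paper's: Frobenius reciprocity plus the $\Hom$-vanishing \eqref{Hom-Vir} and the Virasoro fusion rules, exactly as in \eqref{eq:FR}. The locality part, however, has two genuine gaps.

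First, and most seriously, your term-by-term matching against Corollary \ref{cor:branching} can only be carried out for the objects that corollary actually covers: the simples (relaxed, $\sldis{r,s}^\pm$, $\slirr{r}$), the length-two modules $\sfmod{\ell}{\slindrel{r,s}^\pm}$, and the projectives $\sfmod{\ell}{\slproj{r,s}}$. The category $\sWtslA{k+1}$ is not semisimple and its blocks $C_{1,n}$ contain many more indecomposables than these (arbitrary finite-length objects built from the simples $L^{1,n}_m$), and for such an $X$ there is no ready-made local module $\tilde M$ whose restriction to $\mathcal C$ equals $\bigoplus_m (\slirr{m}^{k+1}\boxtimes X)\otimes\vir^{k'}_{m,r}$; your proposal simply postulates one. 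The paper closes exactly this gap with an extra step you are missing: having established locality of $\mathcal F_r(P)$ for $P$ projective, it uses that $\mathcal F_r$ is right exact and that every object $M$ of $\sWtslA{k+1}$ is a quotient of its projective cover $P$, so $\mathcal F_r(M)$ is a quotient of the local module $\mathcal F_r(P)$ and hence itself local. Without this (or some substitute), your argument proves the proposition only for the special families, not for all of $\sWtslA{k+1}$.

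Second, within the special families your key claim that ``$\tilde M$ is generated over $A$ by its $m=1$ component'' is asserted rather than proved. For simple targets it is automatic, but for the projective case $Z=\sfmod{\ell}{\slproj{r,s}^k}\otimes \slirr{\underline{a+\ell}}^1$ it is precisely the nontrivial point, and the paper supplies an argument: the cokernel of the counit map $f:\mathcal F_r(X)\to Z$ is a local module whose $\vir^{k'}_{1,r}$-multiplicity vanishes (since the image of $f$ contains the full $m=1$ isotypic component), while every nonzero quotient of $Z$ has nonzero $\vir^{k'}_{1,r}$-multiplicity because every nonzero quotient of the projective $\sfmod{\ell}{\slproj{r,s}^k}$ retains its simple top, whose branching contains an $m=1$ term; hence $f$ is surjective and, by the finite-length $\mathcal C$-isomorphism, an isomorphism. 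Relatedly, your closing remark that triviality of the monodromy $\mathcal M_{A,\,X\otimes\vir^{k'}_{1,r}}$ is ``guaranteed by the conformal-weight alignment'' is not a proof: conformal weights only constrain monodromy eigenvalues, and in a non-semisimple category the monodromy could a priori act non-semisimply (logarithmically); indeed the paper deduces semisimplicity of such monodromies as a \emph{corollary} of the proposition, not as an input to it.
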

\begin{proof}
Consider two objects $X, Y$ in $\sWtslA{k+1}$.  Then
\begin{equation}\label{eq:FR}
\begin{split}
\text{Hom}_{\text{Rep}(A)}&\left( \mathcal F_r(X), \mathcal F_r(Y)\right) = \text{Hom}_{\text{Rep}(A)}\left( \mathcal F(X \otimes  \vir^{k'}_{1, r}), \mathcal F(Y \otimes \vir^{k'}_{1, r})\right) \\
&= \text{Hom}_{\mathcal C}\left(X \otimes  \vir^{k'}_{1, r}, \bigoplus_{\substack{ m=1 \\ m \ \text{odd} }}^{u+v-1} (\slirr{m}^{k+1} \boxtimes Y) \otimes \vir^{k'}_{m, r})\right) \\
&= \text{Hom}_{\mathcal C}\left(X \otimes  \vir^{k'}_{1, r}, (\slirr{1}^{k+1} \boxtimes Y) \otimes \vir^{k'}_{1, r}\right) \\
&= \text{Hom}_{\sWtsl{k} }\left(X, Y\right).
\end{split}
\end{equation}
Here the second line is Frobenius reciprocity \cite[Lemma 2.61]{CKM} and  the third one uses  \eqref{Hom-Vir}.
So each $\mathcal F_r$ is fully faithfull. 
It remains to show that its image is local, i.e. in $\sWtsl{k} \boxtimes\sWtsl{1}$.
Let $1 \leq r \leq u-1, 1 \leq s \leq v-1$ and let
$a$ denote an integer, s.t. $a = r+s \mod 2$.
Using the fusion rules, Theorem \ref{thm:fusion} and \eqref{eq:fus-vir}, as well as Corollary \ref{cor:branching}
\begin{equation}\label{eq:ident}
\begin{split}
\mathcal F_r(\sigma^\ell(\slrel{\lambda + \frac{a+1}{2}}{1, s}^{k+1})) &\cong_{\mathcal C} \bigoplus_{\substack{m =  1 \\  m +r + s + a \ \text{odd}  }}^{u+v-1} \sigma^\ell(\slrel{\lambda + \frac{a+1}{2}}{m, s}^{k+1}) \otimes \vir_{m, r}^{k'}  \cong_{\mathcal C}
\sigma^\ell(\slrel{\lambda}{r,s}^k) \otimes \slirr{\underline{a+\ell}}^1 \\ 
\mathcal F_r(\sfmod{\ell}{\sldis{1, s}^{\pm, k+1}} )&\cong_{\mathcal C} \bigoplus_{\substack{m =  1 \\  m +r + s + a \ \text{odd}  }}^{u+v-1} \sfmod{\ell}{\sldis{m, s}^{\pm, k+1}} \otimes \vir_{m, r}^{k'}  \cong_{\mathcal C} 
 \sfmod{\ell}{\sldis{r,s}^{\pm, k}}\otimes \slirr{\underline{a+\ell}}^1  \\  
 \mathcal F_r( \sfmod{\ell}{\slproj{1, s}^{k+1}})&\cong_{\mathcal C}   \bigoplus_{\substack{m =  1 \\  m +r + s + a \ \text{odd}  }}^{u+v-1} \sfmod{\ell}{\slproj{m, s}^{k+1}} \otimes \vir_{m, r}^{k'} \cong_{\mathcal C}  
  \sfmod{\ell}{\slproj{r,s}^k}\otimes \slirr{\underline{a+\ell}}^1 \end{split}
\end{equation}
for $\lambda \neq \lambda_{r, s}, \lambda_{u-r, v-s}$.
Consider a pair of objects $(X, Z)$ 
in\\  $\left\{  (\sigma^\ell(\slrel{\lambda + \frac{a+1}{2}}{1, s}^{k+1}), \sigma^\ell(\slrel{\lambda}{r,s}^k) \otimes \slirr{\underline{a+\ell}}^1), 
(\sfmod{\ell}{\sldis{1, s}^{\pm, k+1}}, \sfmod{\ell}{\sldis{r,s}^{\pm, k}}\otimes \slirr{\underline{a+\ell}}^1 ), 
( \sfmod{\ell}{\slproj{1, s}^{k+1}}, \sfmod{\ell}{\slproj{r,s}^k}\otimes \slirr{\underline{a+\ell}}^1 ) \right\}$.
By Frobenius reciprocity \cite[Lemma 2.61]{CKM} and  \eqref{Hom-Vir}, that is by the same reasoning as for \eqref{eq:FR}
\begin{equation}\label{eq:FR2}
\begin{split}
\text{Hom}_{\text{Rep}(A)}\left( \mathcal F_r(X), Z\right) &\cong \text{Hom}_{\text{Rep}(A)}\left( \mathcal F(X \otimes  \vir^{k'}_{1, r}), Z \right) \\
&\cong \text{Hom}_{\mathcal C}\left(X \otimes  \vir^{k'}_{1, r}, \bigoplus_{\substack{ m=1 \\ m \ \text{odd} }}^{u+v-1} (\slirr{m}^{k+1} \boxtimes X) \otimes \vir^{k'}_{m, r})\right) \\
&\cong \text{Hom}_{\mathcal C}\left(X \otimes  \vir^{k'}_{1, r}, (\slirr{1}^{k+1} \boxtimes X) \otimes \vir^{k'}_{1, r}\right) \\
&\cong \text{Hom}_{\sWtsl{k} }\left(X, X\right).
\end{split}
\end{equation}
Here the second line also used \eqref{eq:ident}. In particular for those $Z$ that are simple we can already conclude that \eqref{eq:ident} are isomorphisms in $\text{Rep}(A)$, that is
\begin{equation}\nonumber
\begin{split}
\mathcal F_r(\sigma^\ell(\slrel{\lambda + \frac{a+1}{2}}{1, s}^{k+1})) &\cong_{\text{Rep}(A)}
\sigma^\ell(\slrel{\lambda}{r,s}^k) \otimes \slirr{\underline{a+\ell}}^1 \\ 
\mathcal F_r(\sfmod{\ell}{\sldis{1, s}^{\pm, k+1}} )&\cong_{\text{Rep}(A)}  \sfmod{\ell}{\sldis{r,s}^{\pm, k}}\otimes \slirr{\underline{a+\ell}}^1 
 \end{split}
\end{equation}
for $\lambda \neq \lambda_{r, s}, \lambda_{u-r, v-s}$.
Let now $(X, Z) =
( \sfmod{\ell}{\slproj{1, s}^{k+1}}, \sfmod{\ell}{\slproj{r,s}^k}\otimes \slirr{\underline{a+\ell}}^1 )$.
Consider the image $f$ of $\text{Id}_X$  in  $\text{Hom}_{\text{Rep}(A)}\left( \mathcal F_r(X), Z\right)$ 
under \eqref{eq:FR2}. $f$ is given in Lemma 2.61 of \cite{CKM} and in particular the image of $f$ must contain $X \otimes \vir^{k'}_{1, r}$ itself as a $\mathcal C$-submodule. 
It follows that the quotient of $Z$ by the image of $f$ is a local module that doesn't contain $\vir^{k'}_{1, r}$ as a direct summand, but clearly $Z$ doesn't have such a quotient and so 
$f$ must be surjective. But $f$ is in particular a morphism in $\mathcal C$ and since $\mathcal F_r(X)$ and $Z$ are isomorphic as objects in $\mathcal C$ by \eqref{eq:ident} it must already be an isomorphism
 in $\mathcal C$ and so it must be one in $\text{Rep}(A)$ as well. It follows that $\mathcal F_r(P)$ is in $\sWtsl{k} \boxtimes\sWtsl{1} \cong
\text{Rep}^0(A)$ for any projective module in $\sWtslA{k+1}$.  $\mathcal F_r$ is right exact (in fact it is exact, since $A$ is rigid, but we don't need this here) and so $\mathcal F_r(M)$ is a quotient of $\mathcal F_r(P)$ for $P$ the projective cover of $M$ and so $\mathcal F_r(M)$ is in $\sWtsl{k} \boxtimes\sWtsl{1} \cong
\text{Rep}^0(A)$ for any object $M$ in $\sWtslA{k+1}$.
\end{proof}
A module $M$ lifts to a local module $\mathcal F(M)$ if and only if the monodromy (double braiding) of $M$ with $A$ is trivial, that is $\mathcal M_{A, M} = \text{Id}_{A \boxtimes M}$ \cite[Proposition 2.65]{CKM}. We thus proved that $\sfmod{\ell}{\slproj{1, s}^{k+1}} \otimes \vir^{k'}_{1. r}$ has trivial monodromy with $A$. In particular this monodromy is semisimple and so the same must be true for the monodromy $\mathcal M_{\sfmod{\ell}{\slproj{1, s}^{k+1}}, \slirr{r}^{k+1}}$.
Corollary 2.68 of \cite{CKM} tells us that up to a natural isomorphism monodromy commutes with the induction functor $\mathcal F$ if applied to modules that induce to local modules. 
\begin{corollary}
$\mathcal M_{\sfmod{\ell}{\slproj{r, s}^{k}}, \slirr{t}^{k}}$ is semisimple for all $\ell \in \mathbb Z, 1 \leq r, t \leq u-1, 1 \leq s \leq v-1$. 
\end{corollary}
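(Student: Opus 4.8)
The plan is to exhibit both $\slirr{t}^{k}$ and $\sfmod{\ell}{\slproj{r,s}^{k}}$, each tensored with a level one module, as modules induced along $A$ from $\mathcal{C}=\sWtsl{k+1}\boxtimes\vir_{k'}\on{mod}$, and then to transport semisimplicity of the monodromy downstairs using that induction intertwines the double braiding.

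First I would set up the two inductions. By the third isomorphism of \eqref{eq:ident}, $\mathcal{F}\big(\sfmod{\ell}{\slproj{1,s}^{k+1}}\otimes\vir^{k'}_{1,r}\big)\cong_{\text{Rep}(A)}\sfmod{\ell}{\slproj{r,s}^{k}}\otimes\slirr{\underline{a+\ell}}^{1}$, and this module is local by Proposition \ref{prop:Fr}. On the other hand, the coset branching gives $\mathcal{F}\big(\slirr{1}^{k+1}\otimes\vir^{k'}_{1,t}\big)=\bigoplus_{m\ \text{odd}}\slirr{m}^{k+1}\otimes\vir^{k'}_{m,t}\cong_{\text{Rep}(A)}\slirr{t}^{k}\otimes\slirr{b}^{1}$ for a suitable parity $b$ (the case $t=1$ is the defining decomposition $A=\slirr{1}^{k}\otimes\slirr{1}^{1}$); since the right-hand side lies in $\sWtsl{k}\boxtimes\sWtsl{1}\cong\text{Rep}^{0}(A)$, this module is local as well. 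Both objects therefore satisfy the hypothesis of Corollary 2.68 of \cite{CKM}.

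Next I would apply that corollary: monodromy commutes with induction, so under the canonical identification $\mathcal{F}(X)\boxtimes_{A}\mathcal{F}(Y)\cong\mathcal{F}(X\boxtimes Y)$ the downstairs monodromy $\mathcal{M}_{\mathcal{F}(\slirr{1}^{k+1}\otimes\vir^{k'}_{1,t}),\,\mathcal{F}(\sfmod{\ell}{\slproj{1,s}^{k+1}}\otimes\vir^{k'}_{1,r})}$ is conjugate to $\mathcal{F}$ applied to the upstairs monodromy computed in $\mathcal{C}$. The upstairs monodromy factors over the Deligne product as $\mathcal{M}_{\slirr{1}^{k+1},\,\sfmod{\ell}{\slproj{1,s}^{k+1}}}\boxtimes\mathcal{M}_{\vir^{k'}_{1,t},\,\vir^{k'}_{1,r}}$; the first tensor factor is the identity because $\slirr{1}^{k+1}$ is the unit, while the second is semisimple because $\vir_{k'}$ is a rational (modular) tensor category, in which the double braiding of any two objects acts as a scalar on each simple summand. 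Hence the upstairs monodromy is semisimple, and since $\mathcal{F}$ is additive it sends the corresponding eigenspace (idempotent) decomposition to one of the same kind; consequently the downstairs monodromy is semisimple.

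Finally I would peel off the level one factor. The downstairs monodromy factors over $\sWtsl{k}\boxtimes\sWtsl{1}$ as $\mathcal{M}_{\slirr{t}^{k},\,\sfmod{\ell}{\slproj{r,s}^{k}}}\boxtimes\mathcal{M}_{\slirr{b}^{1},\,\slirr{\underline{a+\ell}}^{1}}$, and since $\sWtsl{1}$ is a pointed modular category the level one factor is an invertible scalar. Semisimplicity of the Deligne product then forces $\mathcal{M}_{\slirr{t}^{k},\,\sfmod{\ell}{\slproj{r,s}^{k}}}$ to be semisimple. Letting $t,r$ range over $1,\dots,u-1$, $s$ over $1,\dots,v-1$ and $\ell$ over $\mathbb{Z}$ yields the claim for non-integral admissible $\ell$ (for integral admissible $\ell$ the category is semisimple and there is nothing to prove). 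The one delicate point is the application of Corollary 2.68 of \cite{CKM}: one must verify that both modules induce to local ones — which is exactly what Proposition \ref{prop:Fr} and the locality of $\slirr{t}^{k}\otimes\slirr{b}^{1}$ provide — and that the natural isomorphism intertwining $\mathcal{M}_{\mathcal{F}X,\mathcal{F}Y}$ with $\mathcal{F}(\mathcal{M}_{X,Y})$ preserves semisimplicity, which holds because additive functors carry idempotent decompositions to idempotent decompositions.
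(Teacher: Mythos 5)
Your proposal is correct and takes essentially the same route as the paper: both arguments rest on the locality statements coming from Proposition \ref{prop:Fr} (together with the identification of $\mathcal F\bigl(\slirr{1}^{k+1}\otimes\vir^{k'}_{1,t}\bigr)$ with $\slirr{t}^{k}\otimes\slirr{b}^{1}$), on Corollary 2.68 of \cite{CKM} to commute monodromy with induction, and on modularity of $\vir_{k'}$ and of $\sWtsl{1}$ to peel off invertible semisimple factors. The only difference is presentational: the paper first extracts semisimplicity of the level-$(k+1)$ monodromy from triviality of $\mathcal M_{A,\,\sfmod{\ell}{\slproj{1,s}^{k+1}}\otimes\vir^{k'}_{1,r}}$, while you compute the upstairs monodromy directly against the unit $\slirr{1}^{k+1}$, which amounts to the same Virasoro scalar computation.
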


\begin{corollary}\label{cor:subcategory}
$\sWtslA{k+1}$ is a tensor subcategory of $\sWtsl{k+1}$.
\end{corollary}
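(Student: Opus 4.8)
The plan is to exhibit $\sWtslA{k+1}$ as a subcategory cut out by a monodromy (centralizer) condition, since such subcategories are automatically closed under the tensor product. First I would record the cheap structural facts. By Theorem \ref{classification}, $\sWtslA{k+1}$ is a union of blocks of $\sWtsl{k+1}$, so it is a Serre subcategory (closed under subquotients, extensions and direct sums), it contains the unit $\slirr{1}^{k+1}$ (which sits in a block $C_{1,n}$), and it is visibly stable under the contragredient. Thus the only substantive point is closure under $\boxtimes$.

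Next I would set up the induction functor. With $\mathcal C = \sWtsl{k+1}\boxtimes\vir_{k'}\text{-mod}$ and $A := \slirr{1}^{k}\otimes\slirr{1}^{1}$ the commutative algebra of the coset decomposition, the functor $\mathcal F_1(X)=\mathcal F(X\otimes\vir^{k'}_{1,1})$ of Proposition \ref{prop:Fr} is the restriction to $\sWtslA{k+1}$ of the braided monoidal induction functor $\mathcal F$; since $\vir^{k'}_{1,1}$ is the unit of $\vir_{k'}\text{-mod}$, the assignment $\widetilde{\mathcal F}_1(X)=\mathcal F(X\otimes\vir^{k'}_{1,1})$ is a braided monoidal functor defined on all of $\sWtsl{k+1}$, and Proposition \ref{prop:Fr} says its restriction to $\sWtslA{k+1}$ is fully faithful with image in $\text{Rep}^0(A)$. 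Now $\widetilde{\mathcal F}_1(X)$ is a local $A$-module precisely when $\mathcal M_{A,\,X\otimes\vir^{k'}_{1,1}}=\text{id}$ \cite{CKM}, and since $\vir^{k'}_{1,1}$ is the unit and $\vir_{k'}\text{-mod}$ is rational this reduces to $\mathcal M_{\slirr{m}^{k+1},X}=\text{id}$ for every odd $m$ occurring in $A$. Let $\mathcal Z\subseteq\sWtsl{k+1}$ be the full subcategory of such $X$. Because the double braiding factorizes (the hexagon axioms express $\mathcal M_{\slirr{m}^{k+1},\,X\boxtimes Y}$ through $\mathcal M_{\slirr{m}^{k+1},X}$ and $\mathcal M_{\slirr{m}^{k+1},Y}$), $\mathcal Z$ is closed under $\boxtimes$, and it is clearly closed under subquotients and direct sums; hence $\mathcal Z$ is a tensor subcategory. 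By Proposition \ref{prop:Fr} (with $r=1$) we already have $\sWtslA{k+1}\subseteq\mathcal Z$, so it suffices to prove the reverse inclusion $\mathcal Z\subseteq\sWtslA{k+1}$.

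This reverse inclusion is the heart of the argument and the step I expect to be hardest, since the tensor product of two generic objects of $\sWtslA{k+1}$ is not known explicitly. The claim is that a block with first index $r\neq1$ fails the centralizer condition, i.e. contains an object with nontrivial monodromy with some odd integrable. Concretely I would compute the monodromy eigenvalues $\theta_{W}/(\theta_{\slirr{m}^{k+1}}\theta_{X})$ on the simple summands $W$ of $\slirr{m}^{k+1}\boxtimes X$ for $X$ simple in a block of first index $r$, reading the twists $\theta=e^{2\pi i L_0}$ off the conformal weights \eqref{eq:DefDelta} together with the spectral-flow shift and the decomposition from Theorem \ref{thm:fusion}, and check that these are all $1$ exactly when $r=1$. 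The key structural input is that the first index behaves as a Virasoro minimal-model label under the coset, so that the odd integrables $\slirr{m}^{k+1}$ realize precisely the rational $\sltwo$-sector whose centralizer is the first-index-$1$ part; equivalently, the absorbing property $N^{u+v\ r''}_{1,r'}=\delta_{r'',r'}$ forces the product of two first-index-$1$ objects to remain first-index $1$. Granting $\mathcal Z=\sWtslA{k+1}$, the previous paragraph shows $\sWtslA{k+1}$ is a tensor subcategory. As an alternative to the explicit twist computation, one can reflect through the full faithfulness of $\mathcal F_1$: monoidality gives $\widetilde{\mathcal F}_1(X\boxtimes Y)\cong\mathcal F_1(X)\boxtimes_A\mathcal F_1(Y)$, and recovering $X\boxtimes Y$ as the $\vir^{k'}_{1,1}$-isotypic component of the restriction of this local module reduces the claim to showing that the essential image of $\mathcal F_1$ is closed under $\boxtimes_A$, i.e. again to first-index-$1$ fusion staying first-index $1$.
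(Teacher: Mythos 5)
Your setup is sound as far as it goes: $\mathcal F_1$ is monoidal, locality of $\mathcal F(X\otimes \vir^{k'}_{1,1})$ is equivalent to triviality of the monodromies $\mathcal M_{\slirr{m}^{k+1},X}$ for odd $m$, the M\"uger-type centralizer $\mathcal Z$ you define is automatically a tensor subcategory, and $\sWtslA{k+1}\subseteq\mathcal Z$ follows from Proposition \ref{prop:Fr}. But the proof has a genuine gap exactly where you predict it: nothing you write establishes the reverse inclusion $\mathcal Z\subseteq\sWtslA{k+1}$, and without it the closure of $\mathcal Z$ under $\boxtimes$ says nothing about $\sWtslA{k+1}$. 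Route (a) is an unexecuted computation of substantial size --- it needs the twists on the simples of \emph{every} block (including spectral-flowed ones, where one must first argue that $e^{2\pi i L_0}$ is still a scalar), the fusion rules of Theorem \ref{thm:fusion}, and a case analysis over fusion edge cases and the identifications $(r,s)\sim(u+v-r,v-s)$ --- and the ``structural input'' you offer in its place is circular: the absorbing property $N^{u+v\ r''}_{1,r'}=\delta_{r'',r'}$ only says $\slirr{1}^{k+1}$ is the unit; it does not ``force the product of two first-index-$1$ objects to remain first-index $1$,'' which is precisely the corollary being proved. Route (b) you terminate the same way, by your own admission reducing ``again to first-index-$1$ fusion staying first-index $1$.''

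The missing idea --- and it is the whole content of the paper's proof --- is that one needs neither $\mathcal Z=\sWtslA{k+1}$ nor closure of the essential image of $\mathcal F_1$ under $\boxtimes_A$. One only needs: (i) $\mathcal F_1(X\boxtimes Y)\cong \mathcal F_1(X)\boxtimes_A\mathcal F_1(Y)$ is local, which is automatic because $\text{Rep}^0(A)$ is closed under $\boxtimes_A$ (no information about the image is used); and (ii) the multiplicity space of $\vir^{k'}_{1,1}$ in \emph{every} object of $\text{Rep}^0(A)\cong \sWtsl{k}\boxtimes\sWtsl{1}$ lies in $\sWtslA{k+1}$. Point (ii) is where the real input sits, and it is not a monodromy or fusion computation: the simple objects of $\text{Rep}^0(A)$ are known, their restrictions to $\mathcal C$ are exactly the decompositions listed in Corollary \ref{cor:branching}, and inspecting those decompositions together with \eqref{Hom-Vir} shows that the coefficient of $\vir^{k'}_{1,1}$ in each simple local module has first index $1$; since $\sWtslA{k+1}$ is a union of blocks, hence closed under extensions, the same holds for every (finite-length) local module. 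Then $X\boxtimes Y$, being precisely the $\vir^{k'}_{1,1}$-multiplicity of the local module $\mathcal F_1(X\boxtimes Y)$ by \eqref{Hom-Vir}, lies in $\sWtslA{k+1}$. If you do want to complete route (a), note that it would prove the strictly stronger statement that $\sWtslA{k+1}$ \emph{equals} the centralizer of $A$, which is interesting but not needed here; the branching-rule inspection replaces all of that work.
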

\begin{proof}
We need to show that  $\sWtslA{k+1}$ is closed under tensor product. Consider two objects $X, Y$ in $\sWtslA{k+1}$. Since $\mathcal F_1$ is a composition of tensor functors it is tensor as well, i.e.
\[
\mathcal F_1(X) \boxtimes \mathcal F_1(Y) \cong \mathcal F_1(X \boxtimes Y). 
\]
In particular since both $\mathcal F_1(X)$ and $\mathcal F_1(Y)$ are in   $\sWtsl{k} \boxtimes\sWtsl{1} \cong
\text{Rep}^0(A)$, the same must be true for $ \mathcal F_1(X \boxtimes Y)$.  
Since
\[
\mathcal F_1(X \boxtimes Y) \cong_{\mathcal C} \bigoplus_{\substack{ m=1 \\ m \ \text{odd} }}^{u+v-1} (\slirr{m}^{k+1} \boxtimes (X \boxtimes Y)) \otimes \vir_{m, 1}^{k'} 
\]
in $\sWtsl{k} \boxtimes\sWtsl{1} \cong
\text{Rep}^0(A)$ and by inspecting Corollary \ref{cor:branching} every simple module of  $\text{Rep}^0(A)$ has the property that its multiplicity of $\vir^{k'}_{1, 1}$ is in $\sWtslA{k+1}$. 
Since $\sWtslA{k+1}$ is closed under extensions every module of  $\text{Rep}^0(A)$ has the property that it's multiplicity of $\vir^{k'}_{1, 1}$ is in $\sWtslA{k+1}$.  But $X \boxtimes  Y$ is precisely this multiplicity of $\mathcal F_1(X \boxtimes Y)$  by \eqref{Hom-Vir}.
\end{proof}
\begin{corollary}\label{cor;tensor}
$\sWtslA{k+1}$ is tensor equivalent to the image of $\mathcal F_1$ in   $\sWtsl{k} \boxtimes\sWtsl{1} $.
\end{corollary}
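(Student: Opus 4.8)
The plan is to show that $\mathcal F_1$ itself implements the desired tensor equivalence. By Proposition \ref{prop:Fr} the functor $\mathcal F_1$ is fully faithful, hence restricts to an equivalence of abelian categories between $\sWtslA{k+1}$ and its essential image, which by the same proposition is a full subcategory of $\sWtsl{k} \boxtimes\sWtsl{1} \cong \text{Rep}^0(A)$. The only remaining task is to upgrade this abelian equivalence to a tensor equivalence, for which I would verify two things: that $\mathcal F_1$ is a tensor functor, and that its essential image is closed under the tensor product of $\sWtsl{k} \boxtimes\sWtsl{1}$.

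First I would record that $\mathcal F_1$ is a tensor functor. It factors as the functor $X \mapsto X \otimes \vir^{k'}_{1,1}$ into $\mathcal C = \sWtsl{k+1} \boxtimes \vir_{k'}\on{mod}$ followed by the induction functor $\mathcal F \colon \mathcal C \to \text{Rep}(A)$. Since $h_{1,1}=0$, the object $\vir^{k'}_{1,1}$ is the unit of $\vir_{k'}\on{mod}$, so the first arrow is $X \mapsto X \boxtimes \mathbf{1}$ and is a tensor functor, while induction is a tensor functor by \cite{CKM}. Their composite is therefore a tensor functor, equipped with natural isomorphisms $\mathcal F_1(X) \boxtimes \mathcal F_1(Y) \cong \mathcal F_1(X \boxtimes Y)$ and $\mathcal F_1(\slirr{1}^{k+1}) \cong A$.

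Next I would check that the essential image is a tensor subcategory. By Proposition \ref{prop:Fr} this image lies in $\text{Rep}^0(A)$, where the tensor product is the restriction of that in $\text{Rep}(A)$, so the tensor-structure isomorphism of $\mathcal F_1$ may be read inside $\sWtsl{k} \boxtimes\sWtsl{1}$. By Corollary \ref{cor:subcategory} the subcategory $\sWtslA{k+1}$ is closed under $\boxtimes$, whence for any $X, Y$ in $\sWtslA{k+1}$ the object $\mathcal F_1(X) \boxtimes \mathcal F_1(Y) \cong \mathcal F_1(X \boxtimes Y)$ again lies in the image, and the unit $\mathcal F_1(\slirr{1}^{k+1}) \cong A$ is the unit of $\text{Rep}^0(A)$. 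Thus the essential image is a full tensor subcategory of $\sWtsl{k} \boxtimes\sWtsl{1}$.

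Finally, a fully faithful tensor functor whose essential image is a tensor subcategory is a tensor equivalence onto that image: the abelian equivalence of the first paragraph transports the monoidal structure via the coherent isomorphisms of $\mathcal F_1$, and the associativity and unit constraints are inherited from $\sWtsl{k} \boxtimes\sWtsl{1}$. I expect the only genuine subtlety to be the compatibility flagged above, namely that the tensor product witnessing closure of the image is the one inherited from $\sWtsl{k}\boxtimes\sWtsl{1}$ rather than the a priori larger $\text{Rep}(A)$; this is exactly resolved by the locality established in Proposition \ref{prop:Fr}, so no further work is needed.
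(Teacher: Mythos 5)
Your proposal is correct and follows the same route the paper implicitly takes: the corollary is stated without proof precisely because it is immediate from Proposition \ref{prop:Fr} (full faithfulness and locality of the image) together with Corollary \ref{cor:subcategory} and the observation, made in that corollary's proof, that $\mathcal F_1$ is a composition of tensor functors. Your write-up simply makes explicit the standard bookkeeping (unit object, essential image, restriction of the $\text{Rep}(A)$ tensor structure to local modules) that the paper leaves to the reader.
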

\begin{corollary}\label{cor:translation}
$\sWtsl{k+1}$ is a ribbon category  if and only if $\sWtsl{k}$ is a ribbon category.
\end{corollary}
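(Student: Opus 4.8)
\emph{Reduction to rigidity.} The plan is first to observe that, for the categories produced by Theorem \ref{thm:main}, being a ribbon category is equivalent to being rigid. Any vertex tensor category carries the Huang--Lepowsky--Zhang braiding together with the canonical twist $\theta=e^{2\pi i L_0}$, and these satisfy the balancing axioms automatically; since the contragredient functor preserves conformal weights, $\theta$ acts by the same scalar on a module and on its contragredient, so the ribbon condition $\theta_{X^*}=(\theta_X)^*$ holds as soon as duals exist. Hence it suffices to show that $\sWtsl{k+1}$ is rigid if and only if $\sWtsl{k}$ is rigid. Throughout I use the category $\mathcal C$ and the commutative algebra $A$ of this subsection, together with $\sWtsl{k}\boxtimes\sWtsl{1}\cong\text{Rep}^0(A)$; note that $\sWtsl{1}$ and the Virasoro category $\vir_{k'}\on{mod}$ are rational, hence rigid, and that $A$ is a finite direct sum of rigid simple objects and so is a rigid commutative algebra in $\mathcal C$. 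Recall also that $k+1=-2+\tfrac{u+v}{v}$ is again admissible, with first label ranging over $r=1,\dots,u+v-1$, so all the results of this paper apply to it.

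\emph{Forward direction.} Assume $\sWtsl{k+1}$ is ribbon. Then $\mathcal C$ is a Deligne product of rigid categories, hence rigid, and the category $\text{Rep}^0(A)$ of local modules over the rigid \'etale algebra $A$ is then rigid as well \cite{CKM}. Thus $\sWtsl{k}\boxtimes\sWtsl{1}$ is rigid, and as $\sWtsl{k}=\sWtsl{k}\boxtimes\one$ is a tensor subcategory closed under duals it is rigid; that is, $\sWtsl{k}$ is ribbon.

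\emph{Reverse direction.} Now assume $\sWtsl{k}$ is ribbon, so that $\text{Rep}^0(A)$ is rigid. I would first prove rigidity on the subcategory $\sWtslA{k+1}$: for $X$ in $\sWtslA{k+1}$ the modules $\mathcal F_r(X)=\mathcal F(X\otimes\vir_{1,r}^{k'})$ lie in $\text{Rep}^0(A)$ by Proposition \ref{prop:Fr}, hence are rigid; since the vacuum summand $\one$ of $A$ makes $X\otimes\vir_{1,r}^{k'}$ a direct summand of the restriction of $\mathcal F_r(X)$ to $\mathcal C$, this summand is rigid in $\mathcal C$, and cancelling the rigid factor $\vir_{1,r}^{k'}$ shows $X$ is rigid. (Alternatively, one may transport rigidity across the tensor equivalence of Corollary \ref{cor;tensor}, checking with \eqref{eq:ident} that its essential image is closed under duals.) To pass to all of $\sWtsl{k+1}$ I would use the simple currents $\slirr{r}^{k+1}$, which are rigid and self-dual by \cite{CHY}: by the multiplicity-free $r'=1$ fusion rules of Theorem \ref{thm:fusion} the functor $\slirr{r}^{k+1}\boxtimes(-)$ sends each simple and each projective cover of first label $1$ to the corresponding object of first label $r$, and since tensoring with a rigid object preserves rigidity, every simple and every indecomposable projective of $\sWtsl{k+1}$ is rigid.

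\emph{Main obstacle.} The crux is upgrading rigidity of simple and projective objects to rigidity of \emph{every} object, equivalently covering the blocks of $\sWtsl{k+1}$ (Theorem \ref{classification}) that the induction functors $\mathcal F_r$ do not see. For generic $r$ the functor $\slirr{r}^{k+1}\boxtimes(-)$ is in fact an equivalence of $\sWtslA{k+1}$ onto the union of the first-label-$r$ blocks: it is exact, hits all their simples and projective covers, and is fully faithful because for $X,Y$ in $\sWtslA{k+1}$ self-duality of $\slirr{r}^{k+1}$ gives $\Hom(\slirr{r}^{k+1}\boxtimes X,\slirr{r}^{k+1}\boxtimes Y)\cong\bigoplus_{r''}N^{u+v,\,r''}_{r,r}\,\Hom(X,\slirr{r''}^{k+1}\boxtimes Y)$, where every term with $r''\neq1$ vanishes by cross-block orthogonality of morphisms, leaving $\Hom(X,Y)$. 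Such an equivalence transports rigidity of all objects of $\sWtslA{k+1}$ to the corresponding blocks. The genuinely delicate case is $u+v$ even and $r=\tfrac{u+v}{2}$, where $N^{u+v,\,u+v-1}_{r,r}\neq0$, the simple current collapses the two labels and the functor fails to be full; here I expect to conclude using that projective covers coincide with injective hulls, so that every object is a kernel of a morphism between rigid injectives, together with the exactness of tensoring by rigid objects and the principle that in a braided finite tensor category with enough projectives rigidity is detected on simple objects. Assembling these steps, $\sWtsl{k+1}$ is rigid, hence ribbon, which completes the equivalence.
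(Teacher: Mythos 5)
Your overall skeleton matches the paper's proof: reduce the ribbon property to rigidity, get the forward direction from local modules over the rigid algebra $A$, and for the reverse direction first treat $\sWtslA{k+1}$, then fuse with the rigid $\slirr{r}^{k+1}$ of \cite{CHY}, then upgrade from simples and projectives to all objects. However, two of your steps have genuine gaps. First, your primary argument for rigidity of objects of $\sWtslA{k+1}$ rests on the claim that the restriction to $\mathcal C$ of the rigid local module $\mathcal F_r(X)$ is rigid \emph{in $\mathcal C$}. The restriction functor $\text{Rep}^0(A)\to\mathcal C$ is not a tensor functor (it is only lax monoidal: there is a canonical surjection $R(M)\otimes R(N)\to R(M\otimes_A N)$, not an isomorphism), so it does not carry duals to duals; and you cannot appeal to rigidity of $\mathcal C$, since rigidity of $\sWtsl{k+1}$ is exactly what is being proven. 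The subsequent step of ``cancelling the rigid factor $\vir^{k'}_{1,r}$'' inside the Deligne product is likewise left unproven (nothing guarantees a priori that the dual of $X\otimes\vir^{k'}_{1,r}$ has the form $X^\vee\otimes\vir^{k'}_{1,r}$). Your parenthetical alternative is the paper's actual route, but the content it needs is nontrivial and absent from your sketch: one must show the image of $\mathcal F_1$ is closed under \emph{categorical} duals, which the paper does by (i) checking that $\sWtslA{k+1}$ is closed under contragredients, (ii) proving $\mathcal F_r(M')\cong\mathcal F_r(M)'$ first in $\mathcal C$ (using self-duality of $A$ and of $\vir^{k'}_{1,r}$) and then in $\text{Rep}(A)$ by repeating the argument of Proposition \ref{prop:Fr}, and (iii) invoking section 4 of \cite{CMY4} to identify contragredient duals with categorical duals in the ribbon category $\sWtsl{k}\boxtimes\sWtsl{1}$; only then does the tensor equivalence of Corollary \ref{cor;tensor} transport the ribbon structure to $\sWtslA{k+1}$.

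Second, your concluding step appeals to ``the principle that in a braided finite tensor category with enough projectives rigidity is detected on simple objects.'' But $\sWtsl{k+1}$ is not a finite tensor category: it has infinitely many simple objects (spectral flow), and even the individual blocks $C_{r,n}$ contain infinitely many simples, so no finiteness-based principle applies. The paper concludes instead via \cite[Prop.~19]{TW}, whose hypotheses are precisely what has been established at that point, namely that all simples and all projectives are rigid and that projective objects coincide with injective ones \cite{ACK}; no finiteness is required. Relatedly, your detour through block equivalences $\slirr{r}^{k+1}\boxtimes(-)$, and the unresolved case $u+v$ even with $r=\frac{u+v}{2}$, is unnecessary: one only needs that fusion with the rigid objects $\slirr{r}^{k+1}$ produces every simple and every indecomposable projective from objects of $\sWtslA{k+1}$, because the fusion product of rigid objects is rigid \cite[Lemma~4.3.5]{CMY4}; no fullness or equivalence statement about these functors enters the argument.
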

\begin{proof}
First we note that the definiton of braiding in a vertex tensor category of a $\mathbb Z$-graded vertex operator algebra ensures that the twist $\theta = e^{2 \pi i L_0}$ balances the braiding, see \cite[Theorem 4.1]{H3}. We thus have to prove that rigidity on one category implies rigidity on the other one.

If $\sWtsl{k+1}$ is rigid, then $\mathcal C$ is rigid and then $\sWtsl{k} \boxtimes \sWtsl{1}$ is a ribbon category by Proposition 2.86 of \cite{CKM} and so is then $\sWtsl{k}$ as well as a tensor subcateogry of a ribbon category that is closed under duality.

Assume that $\sWtsl{k}$ is a ribbon category. 
The contragredient dual $M'$ of a lower-bounded module $M$ is given by the Weyl reflection of $\mathfrak{sl}_2$ corresponding to  the simple root and the contragredient dual of $\sigma^\ell(M)$ is $\sigma^{-\ell}(M')$ and in particular we inspect that 
$\sWtslA{k+1}$ is closed under  contragredient duals.  Note that $\vir^{k'}_{1, r}$ is its own contragredient dual.
Let $M$ in $\sWtslA{k+1}$, since $A$ is its own contragredient dual we have $\mathcal F_r(M') \cong_{\mathcal C} \mathcal F_r(M)'$. The exact same argument as in the proof of Proposition \ref{prop:Fr} gives $\mathcal F_r(M') \cong_{\text{Rep}(A)} \mathcal F_r(M)'$ and in particular the image under $\mathcal F_1$ is closed under the contragredient dual. 
In a ribbon vertex tensor category of a self-dual vertex operator algebra the contragredient dual is also the dual in the categorical sense (see section 4 of \cite{CMY4}), i.e. the image of $\mathcal F_1$ is closed under duality and it is clearly also abelian and so in particular it is a ribbon subcategory. By the previous corollary $\sWtslA{k+1}$ is thus a ribbon category as well. 
From the fusion rules, we see that every simple/projective object is the fusion product of a simple/projective object in $\sWtslA{k+1}$ with a $\slirr{m}^{k+1} $. 
Rigidity of the $\slirr{m}^{k+1}$ is shown in \cite{CHY} 
 and since the fusion product of rigid objects is rigid \cite[Lemma 4.3.5]{CMY4} we conclude that all simple  and projective objects in $\sWtsl{k+1}$ are rigid. 
 Also note that all projective objects are injective and vice versa \cite{ACK}. 
The assumptions of \cite[Prop.19]{TW} are satisfied and so $\sWtsl{k+1}$ is rigid. 
\end{proof}
In \cite{CMY3} the ribbon structure of $\sWtsl{k}$ for $k=-1/2$ and $k=-4/3$ have been established and also fusion rules are determined. 
\begin{corollary}\label{cor:ribbon}
$\sWtsl{k}$ is a ribbon category for admissible level $k = -2 + \frac{u}{v}$ and $v \in \{2, 3\}$ and $u = -1 \mod v$. Moreover the fusion rules follow from \textup{\cite{CMY3}} via Corollary \ref{cor;tensor}.
\end{corollary}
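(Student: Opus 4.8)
The plan is to reduce everything to the two levels $k=-\tfrac12$ and $k=-\tfrac43$ treated in \cite{CMY3} and then to propagate the conclusions along the translation equivalences of Corollaries \ref{cor:translation} and \ref{cor;tensor}. The arithmetic heart of the argument is that the shift $k\mapsto k+1$ acts on the parameters by
\[
k+2=\frac{u}{v}\ \longmapsto\ (k+1)+2=\frac{u+v}{v},
\]
so it fixes $v$ and replaces $u$ by $u+v$; moreover $(u+v,v)$ is again a coprime pair with $u+v\ge 2$, so $k+1$ is admissible whenever $k$ is. For $v=2$ the condition $u\equiv-1\pmod 2$ says $u$ is odd, and the admissible such $u$ are exactly $3,5,7,\dots$, a single chain under $u\mapsto u+2$ whose smallest member $u=3$ is the level $k=-\tfrac12$. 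For $v=3$ the condition $u\equiv-1\equiv 2\pmod 3$ gives the admissible values $2,5,8,\dots$, a single chain under $u\mapsto u+3$ whose smallest member $u=2$ is the level $k=-\tfrac43$. Hence every level in the statement is reached from one of the two base cases by finitely many applications of $k\mapsto k+1$.

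First I would settle the ribbon property by induction along these chains. The base cases $\sWtsl{-1/2}$ and $\sWtsl{-4/3}$ are ribbon by \cite{CMY3}. Assuming $\sWtsl{k}$ is ribbon, Corollary \ref{cor:translation} (which applies since $k$ and $k+1$ are both admissible) gives that $\sWtsl{k+1}$ is ribbon as well. Ascending each chain from its base case then yields that $\sWtsl{k}$ is a ribbon category for all levels with $v\in\{2,3\}$ and $u\equiv-1\pmod v$.

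For the fusion rules I would propagate along the same chains using Corollary \ref{cor;tensor}. Suppose the fusion rules of $\sWtsl{k}$ are known; at the base levels this is exactly the content of \cite{CMY3}, and the second factor $\sWtsl{1}$ is the rational level-one theory whose fusion is standard, so the tensor structure of $\sWtsl{k}\boxtimes\sWtsl{1}$ is determined componentwise. Then the tensor equivalence $\sWtslA{k+1}\cong\mathcal{F}_1(\sWtslA{k+1})\subset\sWtsl{k}\boxtimes\sWtsl{1}$ of Corollary \ref{cor;tensor} transports these fusion rules to the subcategory $\sWtslA{k+1}$. Finally Theorem \ref{thm:fusion} computes the fusion of each ordinary module $\slirr{r}^{k+1}$ with every simple and projective object, and, as observed in the proof of Corollary \ref{cor:translation}, every simple or projective object of $\sWtsl{k+1}$ is the fusion product of such an ordinary module with an object of $\sWtslA{k+1}$; combining these determines all fusion rules at level $k+1$, closing the induction.

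Given the strength of the inputs, the argument is a clean induction and the step I would watch most carefully is the bookkeeping of the first paragraph: one must verify that the residue class $u\equiv-1\pmod v$ is preserved under the translation step of size $v$ and is anchored precisely at the admissible minimum recorded in \cite{CMY3}, so that each family $\{\,-2+u/v\,\}$ forms a single unbroken chain with no level of the stated form left uncovered. Everything else follows formally from Corollaries \ref{cor:translation} and \ref{cor;tensor} together with Theorem \ref{thm:fusion}.
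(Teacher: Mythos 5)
Your proposal is correct and is exactly the argument the paper intends: anchor at the two base levels $k=-\tfrac12$ $(u,v)=(3,2)$ and $k=-\tfrac43$ $(u,v)=(2,3)$ established in \cite{CMY3}, check that translation $k\mapsto k+1$ sends $(u,v)\mapsto(u+v,v)$ so that each residue class $u\equiv-1\pmod v$ forms a single admissible chain, and then propagate the ribbon property along the chain by Corollary \ref{cor:translation} and the fusion rules by Corollary \ref{cor;tensor} together with Theorem \ref{thm:fusion}. The paper leaves this induction implicit, and your careful verification of the arithmetic bookkeeping fills in precisely what was omitted.
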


\flushleft
\bibliographystyle{unsrt}

\end{document}